\numberwithin{equation}{section}
\theoremstyle{plain}
\newtheorem{Th}{Theorem}[section]
\newtheorem{Prop}[Th]{Proposition}
\theoremstyle{definition}
\newtheorem{Rem}[Th]{Remark}
\newtheorem{?}[Th]{Problem}
\def\al{\alpha}
\def\w{\wedge}
\def\R{\mathbb{R}}
\def\C{\mathbb{C}}
\def\Lm{\Lambda}
\def\om{\omega}
\def\Om{\Omega}
\def\ip{\raise1pt\hbox{\large$\lrcorner$}\>}
\DeclareMathOperator{\vol}{vol}
\begin{document}
	
\title{Einstein metrics on bundles over hyperK\"ahler manifolds}

\author[U. Fowdar]{Udhav Fowdar}
	
\address{IMECC - Unicamp \\
	Rua S{\'e}rgio Buarque de Holanda, 651 - Campinas \\
	SP, 13083-859\\
	Brazil}
	
\email{udhav@unicamp.com}
		
\keywords{Hermitian geometry, Quaternion-K\"ahler geometry,  Torus action} 
\subjclass[2010]{53C10, 53C26, 53C55}	
	
\begin{abstract} 
We construct explicit examples of quaternion-K\"ahler and hypercomplex structures on bundles over hyperK\"ahler manifolds. We study the infinitesimal symmetries of these examples and the associated Galicki-Lawson quaternion-K\"ahler moment map. By performing the QK reduction we produce several explicit QK metrics. Moreover we are led to a new proof of a  hyperK\"ahler/quaternion-K\"ahler type correspondence. We also give examples of other Einstein metrics and balanced Hermitian structures on these bundles.
\end{abstract}

\maketitle
\tableofcontents

\section{Introduction}

When searching for examples of Einstein metrics, a natural starting point is to look for examples on bundles over other Einstein manifolds.
For instance, this strategy was used by Calabi to construct the first examples of Calabi-Yau and hyperK\"ahler metrics on bundles over K\"ahler-Einstein manifolds \cite{Calabi1957} and by Bryant-Salamon to construct the first examples of $G_2$ and $Spin(7)$ metrics on bundles over $S^3$, $S^4$ and $\C \mathbb{P}^2$ \cite{Bryant1989}. In each of these cases the total spaces were endowed with a natural family of $G$-structures (where $G=SU(n),Sp(n),G_2$ or $Spin(7)$) and the desired Einstein metrics were found by solving suitable systems of ordinary differential equations. 
In this paper, motivated by examples found by Gibbons-L\"u-Pope-Stelle in \cite{Gibbons01}, we consider the Einstein equations on certain natural bundles over hyperK\"ahler manifolds determined by the hyperK\"ahler $2$-forms. The rather simple nature of the structure equations also allows us to find other geometrically interesting structures namely hypercomplex and balanced Hermitian manifolds. We shall now elaborate on the results in this paper.\\

\noindent\textbf{Outline of paper.} 
Throughout this article, we denote by $(M^{4n},\sigma_1,\sigma_2,\sigma_3,g_M)$ a hyperK\"ahler $4n$-manifold with $\sigma_i$ denoting the triple of K\"ahler forms and $g_M$ the hyperK\"ahler metric.
If $[\sigma_1] \in H^2(M,\R)$ defines an integral cohomology class i.e. $[\sigma_1] \in H^2(M,2\pi\mathbb{Z})$, we denote by $M^{4n+1}_{\al}$ the total space of the $S^1$ bundle determined by $[\sigma_1]$ and let $\al$ be a connection $1$-form such that 
\begin{equation}\label{connection1}
	d\al=\sigma_1.
\end{equation}	
The complex line bundle associated to this $U(1)$ bundle is the so-called pre-quantum line bundle.
If additionally, $[-\sigma_2] \in H^2(M,2\pi\mathbb{Z})$, we denote by $M^{4n+2}_{\al, \xi}$ the total space of the $\mathbb{T}^2$ bundle determined by $[\sigma_1], [-\sigma_2]$ and let $\xi$ be a connection $1$-form such that 
\begin{equation}\label{connection2}
d\xi=-\sigma_2. 
\end{equation}
Likewise, if $[-\sigma_3] \in H^2(M,2\pi\mathbb{Z})$, we denote by $M^{4n+3}_{\al,\xi,\eta}$ the $\mathbb{T}^3$ bundle determined by $[\sigma_1], [-\sigma_2], [-\sigma_3] $ and endowed with a connection $1$-form $\eta$ such that 
\begin{equation}\label{connection3}
d\eta=-\sigma_3.
\end{equation}
Before stating our first main result we first recall the following well-known warped product Einstein metric
\begin{equation}
	g_Q:=dt^2+e^{2t}g_{M}
\end{equation}
defined on $Q^{4n+1}=\R_t \times M^{4n}$. More generally, one can define an Einstein metric of this form by replacing $g_M$ by any Ricci-flat metric cf. \cite[p. 267]{Besse2008}. When $M$ is a hyperK\"ahler manifold however then we show that this Einstein metric can in fact be extended as follows: 
\begin{Th}\label{maintheorem} 
The manifold $P^{4n+2}=\R_t \times M^{4n+1}_\al$ admits a K\"ahler-Einstein metric 
\begin{equation}g_P:=dt^2+4e^{4t}(\al^2)+e^{2t}g_{M}\label{kahlereinstein}\end{equation}
with associated K\"ahler $2$-form given by
\begin{equation}\om_P:=2e^{2t}dt \w \al + e^{2t}\sigma_1.\end{equation}
The manifold  $L^{4n+3}=\R_t \times M^{4n+2}_{\al, \xi}$ admits an Einstein metric 
\begin{equation}g_L:=dt^2+4e^{4t}(\al^2+\xi^2)+e^{2t}g_{M}.\end{equation}
The manifold $N^{4n+4}=\R_t \times M^{4n+3}_{\al,\xi,\eta}$ admits a quaternion-K\"ahler metric 
\begin{equation}g_N:=dt^2+4e^{4t}(\al^2+\xi^2+\eta^2)+e^{2t}g_{M}\label{qkmetric}\end{equation}
with the associated quaternion-K\"ahler $4$-form given by
\begin{equation}
\Om_{N}:=\frac{1}{2}(\om_1 \w \om_1+\om_2 \w \om_2+\om_3 \w \om_3),\label{QK4form}
\end{equation}
where
\begin{gather}
	\begin{cases}
\om_1 := e^{2t}\sigma_1+(2 e^{2t}\xi) \wedge (2 e^{2t}\eta) + dt \wedge  (2e^{2t}\al), \\
\om_2	:= e^{2t}\sigma_2+(2 e^{2t}\xi) \wedge (dt) + (2e^{2t}\al) \wedge  (2 e^{2t}\eta), \\
\om_3 := e^{2t}\sigma_3+(2 e^{2t}\xi) \wedge (2 e^{2t}\al) + (2 e^{2t}\eta) \wedge dt.
\end{cases}
\end{gather}
Furthermore, the almost complex structures $I_i$ defined by $g_{N}(I_i(\cdot),\cdot)=\om_i(\cdot,\cdot)$ are all integrable i.e. $N^{4n+4}$ is a hypercomplex manifold.  In all the cases, the Einstein metrics have negative scalar curvature and are complete if and only if $g_M$ is.
\end{Th}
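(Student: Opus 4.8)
\emph{Proof strategy.} The plan is to treat the three metrics uniformly through a single adapted orthonormal coframe and its structure equations. On $M$ fix a local $g_M$-orthonormal coframe adapted to the hyperK\"ahler triple, and on the total space set $e^0:=dt$, $e^\al:=2e^{2t}\al$, $e^\xi:=2e^{2t}\xi$, $e^\eta:=2e^{2t}\eta$, while taking $e^t$ times the base coframe horizontally; then $g_N=(e^0)^2+(e^\al)^2+(e^\xi)^2+(e^\eta)^2+e^{2t}g_M$, and similarly for $g_P,g_L$. Because each $\sigma_i$ is parallel (hence closed) on $M$ and $d\al=\sigma_1$, $d\xi=-\sigma_2$, $d\eta=-\sigma_3$, the differentials of all coframe $1$-forms are immediate, and every verification below reduces to bookkeeping with the $\sigma_i$ and the two warping exponentials. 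For instance, using $d(e^{2t})=2e^{2t}dt$ and $d\sigma_1=0$, a three-line computation gives $d\om_P=0$.

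For the K\"ahler--Einstein claim on $P$ I would define $J_P$ by $g_P(J_P\cdot,\cdot)=\om_P(\cdot,\cdot)$ and first check the compatibility $J_P^2=-\mathrm{id}$ (it exchanges $\partial_t$ with the $S^1$-fibre direction and acts as the first hyperK\"ahler complex structure along $M$), so that $g_P$ is Hermitian and $\om_P$ is of type $(1,1)$. Integrability of $J_P$ I would obtain from Newlander--Nirenberg by writing down the bundle of $(1,0)$-forms explicitly and checking, via the structure equations, that the ideal it generates is $d$-closed. Together with $d\om_P=0$ this makes $(P,g_P,J_P)$ K\"ahler, and the Einstein condition then follows by reading off the Levi-Civita connection forms from the structure equations and computing $\Ric(g_P)$ directly in the coframe (for the K\"ahler metric one may instead compute the Ricci form and check it is a negative multiple of $\om_P$).

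For $N$ the three $2$-forms $\om_1,\om_2,\om_3$ restrict to the standard quaternionic triple on the span of $e^0,e^\al,e^\xi,e^\eta$ and to the scaled hyperK\"ahler triple along $M$, so they define an almost quaternion-Hermitian structure compatible with $g_N$. The conceptual crux is the single identity $d\Om_N=0$: expanding $\Om_N=\tfrac12\sum_i\om_i\w\om_i$ and applying the structure equations, the exponential and connection contributions cancel against the $d\sigma_i=0$ terms. Once this is in hand I would invoke Swann's theorem that in dimension $4m\ge 8$ a closed fundamental $4$-form is automatically parallel to conclude that $g_N$ is quaternion-K\"ahler, hence Einstein; the degenerate case $n=0$, where $\Om_N$ is a multiple of the volume form, must instead be checked directly as a self-dual Einstein $4$-metric (a left-invariant metric on a solvable group, with all $\sigma_i=0$). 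The hypercomplex statement I would prove by showing the Nijenhuis tensor of each $I_i$ vanishes; since integrability of $I_1$ and $I_2$ forces that of $I_3$, it suffices to verify that the $(1,0)$-ideals of $I_1,I_2$ are differentially closed, again a direct consequence of the structure equations.

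The Einstein property of $g_L$, which carries no K\"ahler or quaternion-K\"ahler shortcut, I would establish by the same direct curvature computation in the coframe as for $g_P$; the sign of the scalar curvature in all three cases is read off as negative from the Einstein constants produced by these computations. For completeness, the line factor $\R_t$ is complete, the torus fibres are compact, and the warping functions $e^{2t},e^{4t}$ are smooth and strictly positive for every finite $t$, so the fibre and cone directions introduce no metric degeneration, and one checks that completeness of the total space is equivalent to that of the base $(M,g_M)$. The main obstacle is the direct Ricci computation---unavoidable for $g_L$---which is the most laborious part of the argument; by contrast the quaternion-K\"ahler (and Einstein) conclusion for $g_N$ follows almost for free from the single cancellation $d\Om_N=0$ via Swann's criterion, and the integrability statements reduce to checking that explicit $(1,0)$-ideals are $d$-closed.
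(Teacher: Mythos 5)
Your overall route is the paper's: adapted coframe and structure equations, $d\om_P=0$ plus an explicit $(1,0)$-ideal for the K\"ahler--Einstein claim on $P$, closedness of $\Om_N$ plus Swann's criterion for $N$, a direct Ricci computation for $g_L$ and $g_P$ (the paper writes out the Einstein system for the general ansatz $dt^2+p^2g_M+q^2\al^2+r^2\xi^2+s^2\eta^2$ and checks that $p=ae^{bt}$, $q=r=s=2a^2be^{2bt}$ solves it), and integrability of the $I_i$ via $d$-closed $(1,0)$-ideals. However, there is one genuine gap: you misquote Swann's theorem. Closedness of the fundamental $4$-form forces it to be parallel only in dimension $4m\ge 12$; in dimension exactly $8$ it does not --- Salamon constructed a compact quaternion Hermitian $8$-manifold with $d\Om=0$ whose structure is not quaternion-K\"ahler, so the extra condition there is not optional. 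The borderline case in this theorem is therefore not $n=0$ (which never occurs, since $M^{4n}$ is hyperK\"ahler and so $n\ge 1$) but $n=1$, i.e. $N^{8}$ fibred over a hyperK\"ahler $4$-manifold --- precisely the most important family of examples ($\mathbb{T}^4$, K3, the Gibbons--Hawking metrics). As written, your argument establishes nothing for $N^8$.

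The fix is cheap and is exactly what the paper does: in dimension $8$ one must check, in addition to $d\Om_N=0$, that the algebraic ideal generated by $\langle\om_1,\om_2,\om_3\rangle$ is a differential ideal. The structure equations you already propose to derive give this immediately, since each $d\om_i$ is a sum of terms of the form $(\text{$1$-form})\w\om_j$; the same equations then yield $d\Om_N=\sum_i\om_i\w d\om_i=0$ by the antisymmetry of the coefficient matrix. So you should state the correct version of Swann's criterion and record the differential-ideal verification explicitly for the $8$-dimensional case; with that insertion your outline coincides with the paper's proof.
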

It is worth emphasising that $N^{4n+4}$ is not a hyperK\"ahler manifold since $\om_i$ are not closed.
When $M=\mathbb{T}^4$, the total spaces $P^6$, $L^7$ and $N^8$ in Theorem \ref{maintheorem} with their Einstein metrics have been identified as certain non-unimodular solvable Lie groups in \cite{Gibbons01}. 
The latter QK metric on $N^8$ was rediscovered in \cite[(4.24)]{IvanovQKContact2010}, more generally when $M=\mathbb{T}^{4n}$, by studying quaternionic contact structures introduced by Biquard \cite{Biquard2000}. In general, however, the Einstein metrics arising from Theorem \ref{maintheorem} are not homogeneous unless $M^{4n}$ is flat.

The fact that the underlying QK structure is hypercomplex is not too surprising since it is well-known that infinitesimal symmetries of QK manifolds correspond to integrable almost complex structures cf. \cite[Prop. 3.1]{GAMBIOLI2015146} and here we have that $\Om_{N}$ is $\mathbb{T}^3$-invariant.
In \cite{Goldstein2004} Goldstein and Prokushkin construct complex manifolds which are $\mathbb{T}^2$ bundles over suitable Hermitian manifolds.
The hypercomplex structure on $N^{4n+4}$ can be viewed as a quaternionic extension of their result as follows. By definition, hyperK\"ahler manifolds have holonomy group contained in $Sp(n)$. Thus, the space of $2$-forms on $M^{4n}$ splits into irreducible $Sp(n)$-modules as
\begin{equation}
\Lm^2(M) \cong \mathfrak{sp}(1) \oplus \mathfrak{sp}(n) \oplus (\mathfrak{sp}(1) \oplus \mathfrak{sp}(n))^\perp,\label{spnsp1splitting}
\end{equation}
where $\mathfrak{sp}(1)\cong \langle \sigma_1, \sigma_2, \sigma_3\rangle$ is a trivial vector bundle (in fact, flat with respect to the Levi-Civita connection) and $\mathfrak{sp}(n)$ is the holonomy algebra of $M$. If $\gamma \in \mathfrak{sp}(n) $ is a closed $2$-form such that $[\gamma]\in H^2(M,2\pi \mathbb{Z})$, let $\nu$ be the connection $1$-form such that 
\[d\nu=\gamma\]
on the associated $\mathbb{T}^4$ bundle $M^{4n+4}_{\al,\xi,\eta,\nu} \to M^{4n}$. Then we have
\begin{Th}\label{hypercomplextheorem}
On $M^{4n+4}_{\al,\xi,\eta,\nu}$ the data
\begin{gather}
	\check{g}:=g_M + \al^2 + \xi^2 + \eta^2 + \nu^2,\\
	\begin{cases}
	\check{\om}_1 :=\sigma_1+\xi \wedge \eta + \nu \wedge  \al, \\
	\check{\om}_2	:=\sigma_2+\xi \wedge \nu + \al \wedge  \eta, \\
	\check{\om}_3 :=\sigma_3+\xi \wedge \al + \eta \wedge \nu ,
	\end{cases}
\end{gather}
defines a hyper-Hermitian $Sp(n+1)$-structure, where we recall that the curvature forms are given by $d\al=\sigma_1$, $d\xi=-\sigma_2$, $d\eta=-\sigma_3$ and $d\nu=\gamma$.
\end{Th}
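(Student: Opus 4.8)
The plan is to verify the two defining properties of a hyper-Hermitian $Sp(n+1)$-structure in turn: first that $(\check{g},\check{\om}_1,\check{\om}_2,\check{\om}_3)$ is an almost hyper-Hermitian (i.e.\ $Sp(n+1)$) structure pointwise, and then that each of the three almost complex structures $\check{I}_i$ determined by $\check{g}(\check{I}_i\,\cdot\,,\cdot)=\check{\om}_i$ is integrable. I would begin by fixing a local orthonormal coframe $\{e^1,\dots,e^{4n}\}$ on $M$ adapted to the hyperK\"ahler structure, so that $\sigma_1,\sigma_2,\sigma_3$ take their standard quaternionic normal form, and complete it to a coframe $\{e^1,\dots,e^{4n},\al,\xi,\eta,\nu\}$ on the total space. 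With respect to this coframe $\check{g}$ is the standard flat form, and each $\check{\om}_i$ splits as $\sigma_i$ (involving only base directions) plus a fibrewise $2$-form in $\al,\xi,\eta,\nu$.

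For the first property, this splitting shows that each $\check{I}_i$ is block diagonal with respect to the orthogonal decomposition into base and fibre directions: on the base it restricts to the hyperK\"ahler complex structure $I_i$, and on the fibre it is the constant-coefficient almost complex structure $J_i$ read off from $\xi\wedge\eta+\nu\wedge\al$, and cyclically. A short computation shows that $J_1,J_2,J_3$ satisfy the quaternion relations $J_1 J_2=J_3=-J_2 J_1$ on the fibre, while the $I_i$ satisfy them on the base by hypothesis; hence $\check{I}_1\check{I}_2=\check{I}_3$ and cyclically, and since each $\check{\om}_i$ is skew with $\check{I}_i^2=-\mathrm{id}$, every $\check{I}_i$ is automatically $\check{g}$-orthogonal. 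This yields the $Sp(n+1)$-reduction.

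The substantive part is integrability, for which I would use the criterion that $\check{I}_i$ is integrable if and only if $d\beta$ has no $(0,2)$-component for every $(1,0)$-form $\beta$. The base $(1,0)$-forms are pulled back from $M$, and since $M$ is K\"ahler their differentials stay of type $(2,0)\oplus(1,1)$; thus only the two fibre $(1,0)$-forms need checking for each $i$. Solving for these, for instance $\{\xi+i\eta,\ \al-i\nu\}$ for $\check{I}_1$, and differentiating via $d\al=\sigma_1$, $d\xi=-\sigma_2$, $d\eta=-\sigma_3$, $d\nu=\gamma$, one obtains expressions of the form $-(\sigma_2+i\sigma_3)$ and $\sigma_1-i\gamma$. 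Here two hyperK\"ahler facts are decisive: $\sigma_2+i\sigma_3$ is the holomorphic symplectic $(2,0)$-form for $I_1$ (and cyclically for $I_2,I_3$), while $\sigma_1$ is the K\"ahler $(1,1)$-form for $I_1$. Crucially, because $\gamma\in\mathfrak{sp}(n)$ it is of type $(1,1)$ with respect to all three $I_i$ simultaneously, by the $Sp(n)$-decomposition \eqref{spnsp1splitting}; this is exactly why the $\mathfrak{sp}(n)$-hypothesis on $\gamma$, rather than mere closedness, is imposed. Consequently each fibre $(1,0)$-form differentiates into $(2,0)\oplus(1,1)$ with no $(0,2)$-part, so $\check{I}_1$ is integrable.

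By the cyclic symmetry of the three $\check{\om}_i$ under the simultaneous relabelling of $(\sigma_i)$ and of the fibre coordinates, the identical computation proves integrability of $\check{I}_2$ and $\check{I}_3$; alternatively one may invoke the standard fact that an almost hypercomplex structure two of whose complex structures are integrable is automatically hypercomplex. I expect the main obstacle to be bookkeeping the type decompositions correctly, in particular tracking which complex combination $\sigma_j+i\sigma_k$ is of type $(2,0)$ for a given $I_i$ and verifying that the $\mathfrak{sp}(n)$-term $\gamma$ never contributes a $(0,2)$-piece. Once the role of the $Sp(n)$-splitting in \eqref{spnsp1splitting} is used to pin down the type of $\gamma$, the remaining verifications are routine.
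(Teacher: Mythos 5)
Your proposal is correct and follows essentially the same route as the paper: both arguments reduce integrability to checking that the exterior derivatives of the fibre $(1,0)$-forms (e.g.\ $\xi+i\eta$ and $\nu+i\al$, the latter spanning the same line as your $\al-i\nu$) have no $(0,2)$-part, using that $\sigma_2+i\sigma_3$ is of type $(2,0)$ for $I_1$, that $\sigma_1$ is of type $(1,1)$, and --- the decisive point you correctly isolate --- that $\gamma\in\mathfrak{sp}(n)$ is of type $(1,1)$ for all three complex structures via (\ref{spnalgebra}). The only cosmetic difference is that the paper packages this check into the single computation $d\check{\Upsilon}_1=0$ for the $(2n+2,0)$-form $\check{\Upsilon}_1=(\sigma_2+i\sigma_3)^n\wedge(\xi+i\eta)\wedge(\nu+i\al)$, which additionally exhibits the canonical bundle as holomorphically trivial.
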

The hypercomplex structure on $N^{4n+4}$ is  then (locally) just a special case when $\gamma=0$ with the trivial $S^1$ bundle determined by $[\gamma]$ lifted to $\R_t$. 
We also show that there is a different hypercomplex extension of the result of Goldstein and Prokushkin in \cite{Goldstein2004}, see Theorem \ref{extendedhypercomplextheorem} below.

Theorems \ref{maintheorem}, \ref{hypercomplextheorem}, \ref{extendedhypercomplextheorem} are all proved in Section \ref{proofs}. In Sections \ref{examples}, \ref{symmetriessection}, \ref{HKQKsection} and \ref{twistorsection} we then study the geometry of these examples in detail.

In Section \ref{examples} we construct a large class of examples that arise from Theorems \ref{maintheorem}, \ref{hypercomplextheorem} and \ref{extendedhypercomplextheorem} by choosing suitable HK manifold $M^{4n}$. As already mentioned above when $M=\mathbb{T}^{4}$, the total spaces $P^6$, $L^7$ and $N^{8}$ correspond to certain solvable Lie groups and as such these examples admit lots of symmetries. In general, however, the only Killing vector fields present in the examples of Theorem \ref{maintheorem} correspond to the vertical vector fields of the $\mathbb{T}^3$ torus fibres.

In Section \ref{symmetriessection} we show that under suitable hypothesis one can lift Killing vector fields on $M^{4n}$ to Killing vector fields on the total spaces $Q^{4n+1}$, $P^{4n+2}$, $L^{4n+3}$ and $N^{4n+4}$. Furthermore, homothetic vector fields as well as permuting Killing vector fields can be lifted to Killing vector fields (see Section \ref{symmetriessection} for definitions). When lifted to $N^{4n+4}$, we show that these vector fields in fact preserve the QK $4$-form $\Om_{N}$ and hence one can consider the QK reduction cf. \cite{Galicki1988}. In particular, when $M^{4n}$ is a conical HK manifold i.e. the link is a $3$-Sasakian manifold, we consider the radial homothetic Killing vector field. By performing the QK reduction using the lifted action we obtain an explicit expression for the reduced QK metric. In the special case when $M=\R^{4n}$ we recover the QK symmetric space $\frac{Sp(n,1)}{Sp(n)Sp(1)}$:
\begin{center}
	\begin{tikzcd}[column sep=tiny]
		& (N^{4n+4},\Om_{N}) \ar[dr,"/\!\!/\!\!/\!\!/\R_{QK}"] \ar[dl,"\mathbb{R}^3_{\al,\xi,\eta}\times \R_t",swap]
		&
		&[1.5em] \\
		(\R^{4n},\om_1,\om_2,\om_3) 
		&
		& \frac{Sp(n,1)}{Sp(n)Sp(1)} 
	\end{tikzcd}
\end{center}
Here $\mathbb{R}^3_{\al,\xi,\eta}$ denotes the $\R^3$ fibres obtained by lifting the $\mathbb{T}^3$ fibres of $M^{4n+3}_{\al,\xi,\eta}$ to the universal cover so that topologically $N^{4n+4}$ is just $\R^{4n+4}.$ We also show that the HK quotient of $M^{4n}$ by tri-holomorphic Killing vector fields and the QK quotient of $N^{4n+4}$ by the lifted action commutes with the construction of Theorem \ref{maintheorem}.
Strikingly similar features were also shown to hold for the Swann bundle $\mathcal{U}(N_+^{4n})$ of a positive scalar curvature QK manifold $N^{4n}_+$ cf. \cite{Swann1991}. 

In Section \ref{HKQKsection} we show that given the HK manifold $M^{4n}$ with a permuting $S^1$ action we can lift it to a Killing action on $N^{4n+4}$. The QK quotient of $N^{4n+4}$ then yields a QK manifold $N^{4n}_{-}$ of negative scalar curvature. In the same spirit as Haydys' HK/QK correspondence via $\mathcal{U}(N_+^{4n})$ in \cite{Haydys2008}, we show that one can then reconstruct $M^{4n}$ from $N^{4n}_-$ (see Theorem \ref{HKQKcorrespondenceTheorem} for a precise statement). Thus, $N^{4n+4}$ can be thought of as an analogue of $\mathcal{U}(N_+^{4n})$. As a simple application we show that one can construct explicitly both the complex hyperbolic space and the quaternion hyperbolic space starting from $\R^4$ with suitable permuting vector fields. The correspondence we establish here was also demonstrated in \cite{Alekseevsky2013} but the key difference lies in the fact that while the latter correspondence goes via a pseudo-HK manifold with an $SU(2)$ action, our approach instead goes via the QK manifold $N^{4n+4}$ with a $\mathbb{T}^3$ action.

In Section \ref{twistorsection} we construct explicitly the twistor spaces $Z(N)$ and $Z(M_{\al,\xi,\eta,\nu})$ of the QK and hypercomplex manifolds $N^{4n+4}$ and $M_{\al,\xi,\eta,\nu}$, and we describe their relation with the twistor space $Z(M)$ of the HK manifold $M^{4n}$: 
\begin{center}
	\begin{tikzcd}[column sep=tiny]
		Z(N) \ar[d,"S^2"]\ar[rr,"T^3 \times \R"]
		& [1.5em]
		&Z(M) \ar[d,"S^2"]\\
		N \ar[rr,"T^3 \times \R"]
		&[1.5em]
		& M
	\end{tikzcd}\ \ \ \  \ \ \ \ \
	\begin{tikzcd}[column sep=tiny]
	Z(M_{\al,\xi,\eta,\nu}) \ar[d,"S^2"]\ar[rr,"T^4"]
	& [1.5em]
	&Z(M) \ar[d,"S^2"]\\
	M_{\al,\xi,\eta,\nu} \ar[rr,"T^4"]
	&[1.5em]
	& M
\end{tikzcd}
\end{center}
Twistor theory then allows us to reinterpret geometric data on the $N$ and $M_{\al,\xi,\eta,\nu}$ in terms of the complex geometry of $Z(N)$ and $Z(M_{\al,\xi,\eta,\nu})$.

In Section \ref{furtherexamples} we recall some known examples of Ricci-flat metrics on the bundles of Theorem \ref{maintheorem} with holonomy $SU(n), G_2$ and $Spin(7)$. We also show that the manifolds
$M^{4n+2}_{\xi,\eta}$, $N^{4n+4}$ and $M^{4n+4}_{\nu_1,\nu_2,\nu_3,\nu_4}$ 
admit balanced Hermitian metrics. 
In particular, we show that the metric $\check{g}$ of Theorem \ref{extendedhypercomplextheorem} is balanced for any of the hypercomplex structure.  
Balanced structures were introduced by Michelsohn in \cite{Michelsohn1982} and they also appear in the Strominger system: a system that generalises the complex Monge-Amp\`ere equations and Hermitian-Yang-Mills equations cf. \cite{Fu-Li-Yau12Balanced,MarioLectureonStromingerSystem,Li-Yau2005,Strominger1986}. 
All of our examples are very explicit and as such we expect will be useful in various applications.\\

\noindent\textbf{Acknowledgements.}
The author would like to thank Simon Salamon for introducing to him the beautiful geometry of quaternion-K\"ahler manifolds and also for many useful comments 
that helped shape this article. We would also like to thank Vicente Cort\'es for interesting remarks that led to the results in section \ref{HKQKsection}.
This work was partly supported by the S\~ao Paulo Research Foundation (FAPESP) [2021/07249-0].

\section{Preliminaries}\label{preliminaries}
The aim of this section to fix some notations and gather some standard facts about Hermitian and quaternion-K\"ahler geometry that will relevant for us. 
We refer the reader to \cite{Besse2008, Salamon1989} for proofs.
\subsection{Hermitian structures}

Let $(M^{2n},g,J,\om)$ be an almost Hermitian manifold.
The complexified space of $1$-forms splits as a $+i$ and $-i$ eigenspace under $J$ which we denote by $\Lm^{1,0}$ and $\Lm^{0,1}$ respectively. We denote by $\Lm^{p,q}$ the space of complex $(p+q)$-forms obtained by wedging $p$ elements of $\Lm^{1,0}$ with $q $ elements of $\Lm^{0,1}$.

$(M^{2n},J)$ is a complex manifold if and only if $d(\Gamma(\Lm^{1,0}))\subset \Gamma(\Lm^{1,1}\oplus \Lm^{2,0})$, or equivalently $d(\Gamma(\Lm^{n,0}))\subset \Gamma(\Lm^{n,1})$.
 $(M,g,J,\om)$ is then called a Hermitian manifold. 
If additionally, $\om$ is closed then $M$ is K\"ahler. The Hermitian manifold is said to be balanced if instead the weaker condition $d(\om^{n-1})=0$ is satisfied i.e. $\om$ is coclosed.

If there exists a global section $\tilde{\Upsilon}$ of $\Lm^{n,0}$ then the first Chern class of $(M^{2n},J)$ vanishes. If $(M,g,J,\om,\tilde{\Upsilon})$ is K\"ahler and $\tilde{\Upsilon}$ is both closed and has constant norm then $M$ is a Calabi-Yau manifold. This is equivalent to saying that the holonomy  group of $g$ is contained in $SU(n)$. In particular, $g$ is then Ricci-flat. 

We should point out that the definition of a Calabi-Yau manifold is not standard. Some people also call a complex manifold $(M,J)$ Calabi-Yau if it admits a holomorphic $(n,0)$-form cf. \cite{Fu-Li-Yau12Balanced, MarioLectureonStromingerSystem}. Henceforth, we shall always assume that $M$ is a $4n$-dimensional manifold. 

\subsection{Quaternionic structures} $(M^{4n},I_1,I_2,I_3)$ is called an almost hypercomplex manifold if the almost complex structures $I_1,I_2,I_3$ satisfy the quaternion relations 
\begin{equation}
	I_i\circ I_j=I_k \text{\ \ and\ \ }I_i\circ I_j=-I_j\circ I_i \label{quaternionrelations}\end{equation}
where $(i,j,k)$ are cyclic permutation of $(1,2,3)$. If $I_i$ are all complex structures then $(M^{4n},I_1,I_2,I_3)$ is called hypercomplex. In fact, the quaternion relations imply that if any two of the almost complex structures is integrable then so is the third.

A metric $g$ is hyper-Hermitian if it is Hermitian for each $I_i$. We also have a triple of $2$-forms $\om_i$ associated to each complex structure $I_i$ by $g(I_i(\cdot),\cdot)=\om_i(\cdot,\cdot)$. $(M^{4n},g,I_1,I_3,I_3)$ is called hyperK\"ahler if in addition each $\om_i$ is closed. This is equivalent to saying that the holonomy  group of $g$ is contained in $Sp(n)$. In particular, $g$ is then Ricci-flat.

If a Riemannian manifold $(M^{4n},g)$ admits \textit{locally} a triple of compatible almost complex structure satisfying (\ref{quaternionrelations}) then it is called almost quaternion Hermitian. Note that although the almost complex structures are not required to be globally well-defined the $4$-form $\Om$ locally defined by the expression
\[\Om:=\frac{1}{2}(\om_1 \w \om_1 + \om_2 \w \om_2 + \om_3 \w \om_3)\]
is in fact globally well-defined. An equivalent way of phrasing this is saying that although the almost complex structures $I_i$ can only be chosen locally the unit $2$-sphere in the rank $3$ subbundle $\mathcal{G}=\langle I_1,I_2,I_3\rangle \subset\text{End}(TM)$ is globally well-defined.

For $n\geq 2$, $(M^{4n},g,\Om)$ is called a quaternion-K\"ahler manifold if $\Om$ is parallel with respect to the Levi-Civita connection $\nabla^g$ of $g$. Note that QK manifolds are not hypercomplex in general, consider for instance $\mathbb{H}\mathbb{P}^n$. 
It is well-known that they are Einstein manifolds i.e. $Ric(g)=\lambda\cdot g$ \cite{Salamon1982}. If $\lambda =0$ then $M$ is locally hyperK\"ahler so this is excluded in the definition of QK manifolds. If $\lambda>0$ then the only known examples are the Wolf symmetric spaces \cite{Wolf1965}. By contrast, as demonstrated by LeBrun in \cite{LeBrunInfiniteQK}, the $\lambda<0$ case can even occur in infinite families and this will be the case of interest in this paper.
In \cite{Swann1991} Swann shows that 
if $n>2$ then $\nabla^g \Om=0$ if and only if $d \Om=0$. If $n=2$ then in addition to $d\Om=0$, One also requires that the algebraic ideal generated by $\langle\om_1,\om_2,\om_3 \rangle$ is a differential ideal i.e. the algebraic ideal is closed under differentiation.

In \cite{Salamon2001} Salamon gave the first example of a compact quaternion Hermitian $8$-manifold with $d\Om=0$ but whose algebraic ideal $\langle\om_1,\om_2,\om_3 \rangle$ is not differential. This confirms that indeed closedness of $\Om$ is not sufficient for $n=2$.
More examples were constructed in \cite{ContiMadsen2015}. None of these examples are not Einstein however. When $n=1$ one defines QK manifolds as $4$-manifolds which are Einstein and self-dual i.e. the anti-self-dual part of the Weyl curvature vanishes e.g. $S^4$ and $\C \mathbb{P}^2$ \cite{AtiyahHitchinSinger78}.

The holonomy group of a QK manifold $(M^{4n},g,\Om)$ is contained in $Sp(n)Sp(1)$. Since $Sp(n)Sp(1) \subset SO(4n)$ and $\mathfrak{so}(4n)\cong \Lm^2$ we have the splitting (\ref{spnsp1splitting}) of the space of $2$-forms as $Sp(n)Sp(1)$-modules. As already mentioned above if $M$ is a HK manifold then the subbundle $\mathfrak{sp}(1)\cong \langle \om_1,\om_2,\om_3\rangle$ can be trivialised by parallel sections.
For our purpose we will need a more concrete description of the subbundle determined by $\mathfrak{sp}(n)$. 
Locally with respect to an almost complex structure $J\in \mathcal{G}$ we have the decomposition
\[\Lm^2\otimes \C=\Lm^{2,0}_{J}\oplus \Lm^{1,1}_{J}\oplus\Lm^{0,2}_{J}\]
and hence we have $\mathfrak{sp}(n)\subset \mathfrak{u}(2n) \cong [\Lm^{1,1}_J]$, where by definition $[\Lm^{1,1}_J]\otimes \C = \Lm^{1,1}_J$ \cite{Salamon1989}. Since this holds for any $J\in\mathcal{G}$ 
together with an argument using Schur's lemma it follows that 
\begin{equation}
	\mathfrak{sp}(n) \cong \bigcap_{J\in \mathcal{G}}[ \Lm^{1,1}_{J}].\label{spnalgebra}
\end{equation}
So the subbundle determined by $\mathfrak{sp}(n)$ consists of $2$-forms which are of type $(1,1)$ with respect to any almost complex structure $J\in \mathcal{G}$ cf. \cite[Proposition 1]{SalamonYM88}. With these facts in mind we now proceed to prove Theorems \ref{maintheorem}, \ref{hypercomplextheorem} and \ref{extendedhypercomplextheorem}.

\section{Proofs of Theorems \ref{maintheorem}, \ref{hypercomplextheorem} and \ref{extendedhypercomplextheorem}}\label{proofs}
\begin{proof}[Proof of Theorem \ref{maintheorem}]
Let us first show that (\ref{qkmetric}) is a QK metric on $N^{4n+4}$. 
In view of the aforementioned result of Swann, we need to show that $\Om_{N}$ is closed and
for the $n=1$ case we also need to verify that the algebraic ideal generated by $\langle \om_1, \om_2, \om_3\rangle$
is closed under differentiation. A straightforward calculation using (\ref{connection1})-(\ref{connection3}) shows that 
\begin{equation}\label{qkequ}
\begin{cases}
\begin{matrix}
	d\om_1 &=& 0 &+& (-4 e^{2t} \eta)\w \om_2 &+& (4 e^{2t}\xi )\w \om_3,\\
	d\om_2 &=& (4 e^{2t} \eta)\w \om_1 &+& 0  &+& (4 e^{2t}\al )\w \om_3,\\
	d\om_3 &=& (-4 e^{2t} \xi)\w \om_1 &+& (-4 e^{2t}\al )\w \om_2 &+& 0,
\end{matrix}\end{cases}\end{equation}
which confirms the latter. One also easily sees from (\ref{qkequ}) that \[d\Om_{N}=\sum_{i=1}^{3}\om_i \w d\om_i=0.\]
This concludes the proof that (\ref{qkmetric}) is a QK Einstein metric for all $n$. The fact that the almost complex structures $I_i$ are all integrable will follow from Theorem \ref{hypercomplextheorem} which we prove below.

We now show that (\ref{kahlereinstein}) is a K\"ahler metric on $P^{4n+2}$. It is clear from the structure equations that $d\om_P=0$. So it suffices to show that the associated almost complex structure is integrable. Observe that the form $dt+2ie^{2t}\al$ is of  type $(1,0)$ and $\sigma_2 + i\sigma_3$ is of type $(2,0)$ with respect to the associated almost complex structure. We have that
\[d((\frac{1}{2e^{2t}} dt+i\al) \w (\sigma_2 + i\sigma_3)^n)=i \sigma_1 \w (\sigma_2+i\sigma_3)^n=0\]
i.e. $d(\Gamma(\Lm^{2n+1,0}))\subset \Gamma(\Lm^{2n+1,1})$ and hence the almost complex structure is integrable. In fact the above calculation shows that $c_1(P)=0.$

Since QK manifolds are Einstein to complete the proof of the theorem it only remains to show that $g_P$ and $g_L$ are Einstein metrics as well.

Consider more generally a metric of the form \begin{equation}g=dt^2+p(t)^2g_M+q(t)^2\al^2+r(t)^2\xi^2+s(t)^2\eta^2\end{equation}
on $N^{4n+4}$, then the Einstein equation $Ric(g)=\lambda \cdot g$ is given by the system:
\begin{gather}
	\frac{q''}{q}+\frac{r''}{r}+\frac{s''}{s}+4n\cdot \frac{p''}{p}=-\lambda,\label{firstequ}\\
	qq''+qq'(\frac{r'}{r}+\frac{s'}{s})+4n\cdot \frac{p'qq'}{p}-n\cdot \frac{q^4}{p^4}=-\lambda\cdot q^2,\\
	rr''+rr'(\frac{q'}{q}+\frac{s'}{s})+4n\cdot \frac{p'rr'}{p}-n\cdot \frac{r^4}{p^4}=-\lambda\cdot r^2,\\
	ss''+ss'(\frac{r'}{r}+\frac{q'}{q})+4n\cdot \frac{p'ss'}{p}-n\cdot \frac{s^4}{p^4}=-\lambda\cdot s^2,\\
	pp''+pp'(\frac{q'}{q}+\frac{r'}{r}+\frac{s'}{s})+(4n-1)\cdot(p')^2+\frac{1}{2p^2}(q^2+r^2+s^2)=-\lambda\cdot p^2.\label{lastequ}
\end{gather}
For the metric ansatz
 \begin{equation}g=dt^2+p(t)^2g_M+q(t)^2\al^2+r(t)^2\xi^2\end{equation}
 on $L^{4n+3}$, the corresponding Einstein equation is obtained simply by eliminating the terms involving $s(t)$ in (\ref{firstequ})-(\ref{lastequ}). Likewise, for the metric ansatz
 \begin{equation}g=dt^2+p(t)^2g_M+q(t)^2\al^2 \label{kahlereinsteinansatz}\end{equation}
 on $P^{4n+2}$, one eliminates terms involving $r(t)$ and $s(t)$ in (\ref{firstequ})-(\ref{lastequ}) to get the corresponding Einstein equation and finally for the ansatz
\begin{equation} g=dt^2+p(t)^2g_M \end{equation}
on $Q^{4n+1}$ the whole system reduces to the well-known warped product Einstein equation
\begin{equation} (p')^2+\frac{\lambda}{4n}\cdot p^2=0. \end{equation}
One verifies directly that $p(t)=a e^{bt}$ and $q(t)=r(t)=s(t)=2a^2b e^{2bt}$ solves the Einstein equations in all the cases. By rescaling the metric we can always set $b=1$. The factor $a$ can be interpreted geometrically as follows. If  $a$ is an integer we can instead consider the $\mathbb{T}^3$ bundle $M_{a\al,a\xi,a \eta}^{4n+3}$ on $M^{4n}$ determined by $[a\sigma_1], [-a\sigma_2], [-a\sigma_3]$. Thus, the parameter $a$ essentially corresponds to pullbacking the metric on covers of the $\mathbb{T}^3$ fibres. As a consequence of this observation we can in fact relax the hypothesis of the theorem to simply requiring that $[\om_i] \in H^{2}(M, 2\pi\mathbb{Q}).$
Since the metrics are well-defined for all $t\in \R$ it follows that they are complete, provided of course that $g_M$ is. 
\end{proof}
In view of the Ambrose-Singer theorem cf. \cite[Theorem 8.1]{KobayashiNomizu1}, one can verify by computing the rank of the curvature operator, say using \textsc{Maple}, that the Einstein metrics $g_Q$, $g_P$, $g_L$ and $g_N$ have (restricted) holonomy groups equal to $SO(4n+1)$, $U(2n+1)$, $SO(4n+3)$ and $Sp(n)Sp(1)$ respectively. 
It is also clear that each of the projection maps
\[(N^{4n+4},g_N)\to (L^{4n+3},g_L) \to (P^{4n+2},g_P)\to (Q^{4n+1},g_Q)\to (M^{4n},g_M)\]
is a Riemannian submersion. Moreover, the $\mathbb{T}^2$ bundle $N^{4n+4}\to P^{4n+2}$ is a holomorphic fibration with respect to the complex structure $I_1$ on $N^{4n+4}$.

Geometrically, one can interpret the metrics of Theorem \ref{maintheorem} as twisting the HK metric $g_M$ with the hyperbolic metrics. For instance, the metric $g_L$ restricted to a fibre $\R_t \times \mathbb{T}^2$ (over a point in $M$) can be expressed as 
\begin{equation}
dt^2+4e^{4t}(dy_1^2+dy_2^2).\label{hyperbolic3}
\end{equation}
for local coordinates $y_1,y_2$ on the $\mathbb{T}^2$. Thus, $g_L$ can be thought of as a twisted sum of $g_M$ with (\ref{hyperbolic3}). Likewise the same is applies to $g_P$ and $g_N$.

It is worth emphasising that the Einstein metrics arising from Theorem \ref{maintheorem} are not homogeneous unless $M^{4n}$ is flat, in which case $Q,P,L$ and $N$ correspond to certain solvmanifolds cf. \cite{Gibbons01}.
More generally, homogeneous Einstein metrics on solvmanifolds have been investigated by many authors cf. \cite{Alekseevskii1975, Heber1998, Lauret2002, Will2003, Wolter1991} and more recently it was showed in \cite{BohmLafuente2021} that the Alekseevskii conjecture is true i.e any homogeneous Einstein manifold with negative scalar curvature is diffeomorphic to a Euclidean space. So the examples arising from Theorem \ref{maintheorem} differ from those cases.
\begin{Rem}
In the case when we considered ansatz (\ref{kahlereinsteinansatz}) on $P^{4n+2}$, we define a non-degenerate $2$-form (or, equivalently, an almost complex structure) by
\begin{equation}
\om_P=qdt \w \al+p^2\sigma_1.
\end{equation}
Then the $(2n+1,0)$-form 
\begin{equation}
\Om_P:=(q^{-1}dt+i\al)\w(\sigma_2+i\sigma_3)^n 
\end{equation}
is automatically closed and hence holomorphic. Thus, $P^{4n+2}$ is always a Hermitian manifold.
The requirement that $\om_P$ is closed becomes 
\[q=2pp'.\]
Assuming the latter holds, $P$ is then K\"ahler and we can compute its Ricci form as
\begin{align}
Ric(\om_P)=&\ i \partial \bar{\partial}(\log(\|\Om_P\|^2_{g_P}))\nonumber\\
=&\ dd^c(\log(p^{2n+1}p')).\label{riccikahlerpde}
\end{align}
With $p=e^t$ we get
\[Ric(\om_P)=-4(n+1)\cdot\om_P\]
confirming that it is Einstein. We can also easily deduce the K\"ahler potential from
\[\om_P=-\frac{1}{2}dd^c(t).\]
In this particular situation solving the Einstein equation was rather simple owing to the K\"ahler structure but in general, one has to solve the more complicated second order system (\ref{firstequ})-(\ref{lastequ}).
\end{Rem}
We now move to the proof of Theorem \ref{hypercomplextheorem}.
\begin{proof}[Proof of Theorem \ref{hypercomplextheorem}]
We want to show that the almost complex structures $\check{I}_1,\check{I}_2,\check{I}_3$ associated to $\check{\om}_1, \check{\om}_2, \check{\om}_3$ are all integrable. Let us first consider  the almost complex structure $\check{I}_1$. The complex form 
\begin{equation}
	\check{\Upsilon}_1:=(\sigma_2 + i \sigma_3)^{n}\w (\xi + i \eta)\w (\nu + i \al)
\end{equation}
is of type $(2n+2,0)$ with respect to $\check{I}_1$. The Nijenhuis tensor of $\check{I}_1$ is determined by the $(2n-1,2)$ component of the exterior differential of $\check{\Upsilon}_1$. Computing we get 
\begin{align*}
d\check{\Upsilon}_1=&\ -(\sigma_2+i\sigma_3)^{n+1}\w (\nu + i \al)-(\sigma_2+i\sigma_3)^n \w (\xi + i \eta)\w (\gamma+i \sigma_1)\\
=& \ -(\sigma_2+i\sigma_3)^n \w (\xi + i \eta)\w \gamma\\
=& \ 0
\end{align*}
where we used that $(\sigma_2+i\sigma_3)^{n+1}=0\in \Lm^{2n+2,0}_{I_1}(M^{4n})$ and $(\sigma_2+i\sigma_3)^{n}\w \sigma_1\in \Lm^{2n+1,1}_{I_1}(M^{4n})$ for the second equality and for the last equality we used the fact that
$\gamma \in \mathfrak{sp}(n)$, see (\ref{spnalgebra}), so $(\sigma_2+i\sigma_3)^{n}\w \gamma=0\in \Lm^{2n+1,1}_{I_1}(M^{4n})$.
To complete the proof it suffices to apply the analogous argument for $\check{I}_2$ and $\check{I}_3$ with 
\begin{equation}
	\check{\Upsilon}_2:=(\sigma_3 + i \sigma_1)^{n}\w (\xi + i \nu)\w (\al + i \eta)
\end{equation}
and
\begin{equation}
	\check{\Upsilon}_3:=(\sigma_1 + i \sigma_2)^{n}\w (\xi + i \al)\w (\eta + i \nu).
\end{equation}
\end{proof}
Taking $$\nu=\frac{1}{2}e^{-2t}dt$$ and $M^{4n+4}_{\al,\xi,\eta,\nu}=\R_t \times M^{4n+4}_{\al,\xi,\eta}$ (by lifting to the universal cover of trivial $S^1_{\nu}$ bundle) we see that $\check{I}_1,\check{I}_2,\check{I}_3$ agree with $I_1,I_2,I_3$: on the fibres we have
\begin{align*}
I_1(\xi) &=-\eta, & I_1(\nu) &=-\al, \\ 
I_2(\xi) &=-\nu, & I_2(\al) &=-\eta, \\
I_3(\xi) &=-\al, & I_3(\eta) &=-\nu, 
\end{align*}
and it is clear that they agree on the horizontal space. Thus, $\langle I_1,I_2,I_3\rangle$ define a hypercomplex structure on $N^{4n+4}$.
 \begin{Rem}
 The proof of Theorem \ref{hypercomplextheorem} in fact shows that $\check{\Upsilon}_i$ are holomorphic $(2n+2,0)$-forms for $\check{I}_i$ and hence the canonical bundles are trivial. However, if either one of $[\sigma_i],[\nu]$ is non-trivial in cohomology then $M^{4n+4}_{\al,\xi,\eta,\nu}$ cannot even be K\"ahler. Say $[\sigma_1]$ is non-trivial then a simple calculation using the Gysin sequence shows that the $S^1$ fibres of $M_{\al}$ are trivial in homology cf. \cite{Goldstein2004}. It follows that a $\mathbb{T}^2$ fibre in $M_{\al,\xi,\eta,\nu}$ determined by $[\sigma_1]$ and $[\sigma_2]$ is a homologically trivial elliptic curve with respect to $I_3$. Now any $I_3$-compatible K\"ahler form would integrate to zero on this $\mathbb{T}^2$ but on the other hand this corresponds to its volume which is a contradiction. This argument can be applied to any of the complex structures by choosing suitable $\mathbb{T}^2$.
 In particular, we deduce that $M^{4n+4}_{\al,\xi,\eta,\nu}$ is never K\"ahler if $M^{4n}$ is compact.
 \end{Rem}
A slight modification to the argument in the proof of Theorem \ref{hypercomplextheorem} leads to the following different class of hypercomplex manifolds: 
\begin{Th}\label{extendedhypercomplextheorem}
	Let $(M^{4n},\sigma_1,\sigma_2,\sigma_3)$ be a hyperK\"ahler manifold such that there exists a quadruple of closed $2$-forms $\gamma_i \in \mathfrak{sp}(n)$ with $[\gamma_i]\in H^{2}(M,2\pi\mathbb{Z})$, which without loss of generality one can assume are linearly independent or zero. Then 
	\begin{gather}
		\check{g} := g_M+\nu_1^2+\nu_2^2+\nu_3^2+\nu_4^2,\\
		\begin{cases}
			\check{\om}_1 :=\sigma_1+\nu_1 \wedge \nu_2 + \nu_3 \wedge  \nu_4, \\
			\check{\om}_2	:=\sigma_2+\nu_1 \wedge \nu_3 + \nu_4 \wedge  \nu_2, \\
			\check{\om}_3 :=\sigma_3+\nu_1 \wedge \nu_4 + \nu_2 \wedge \nu_3,
		\end{cases}
	\end{gather}
	where $\nu_i$ are connection $1$-forms such that $d\nu_i=\gamma_i$, define a hyper-Hermitian structure on the total space of the $\mathbb{T}^4$ bundle $M^{4n+4}_{\nu_1,\nu_2,\nu_3,\nu_4}$ determined by $[\gamma_i]$.
\end{Th}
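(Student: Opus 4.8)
The plan is to follow the strategy of the proof of Theorem~\ref{hypercomplextheorem} verbatim, the only new input being that all four curvatures $\gamma_i = d\nu_i$ now lie in $\mathfrak{sp}(n)$ rather than three of them being the hyperK\"ahler forms. First I would define the three endomorphisms $\check{I}_1, \check{I}_2, \check{I}_3$ by $\check{g}(\check{I}_a \cdot, \cdot) = \check{\om}_a(\cdot,\cdot)$. Since $\check{g} = g_M + \sum_i \nu_i^2$ splits orthogonally into the horizontally lifted base metric and the fibre metric, and each $\check{\om}_a$ splits correspondingly as $\sigma_a$ plus a fibrewise $2$-form, each $\check{I}_a$ preserves the horizontal/vertical splitting: it acts as the horizontal lift of $I_a$ on the base and, on the fibres, by the rule read off from $\check{\om}_a$ (so $\check{I}_1$ sends the dual of $\nu_1$ to that of $\nu_2$ and the dual of $\nu_3$ to that of $\nu_4$, and cyclically). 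A direct check on the four-dimensional fibre, together with the hyperK\"ahler relations on the base, shows $\check{I}_a^2 = -\mathrm{id}$ and that the $\check{I}_a$ satisfy the quaternion relations~(\ref{quaternionrelations}); since each $\check{\om}_a$ is antisymmetric by construction, the compatibility $\check{g}(\check{I}_a \cdot, \cdot) = \check{\om}_a$ then forces $\check{I}_a$ to be $\check{g}$-orthogonal, so $\check{g}$ is automatically Hermitian for each $\check{I}_a$. Thus the hyper-Hermitian claim reduces to integrability of the three almost complex structures.

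For integrability I would exhibit, for each $a$, a closed $(2n+2,0)$-form, which by the criterion $d(\Gamma(\Lm^{2n+2,0}))\subset \Gamma(\Lm^{2n+2,1})$ recalled in Section~\ref{preliminaries} suffices (it holds trivially once the form is closed). Mirroring Theorem~\ref{hypercomplextheorem}, set
\begin{gather*}
\check{\Upsilon}_1 := (\sigma_2 + i\sigma_3)^n \w (\nu_1 + i\nu_2) \w (\nu_3 + i\nu_4),\\
\check{\Upsilon}_2 := (\sigma_3 + i\sigma_1)^n \w (\nu_1 + i\nu_3) \w (\nu_4 + i\nu_2),\\
\check{\Upsilon}_3 := (\sigma_1 + i\sigma_2)^n \w (\nu_1 + i\nu_4) \w (\nu_2 + i\nu_3),
\end{gather*}
each of type $(2n+2,0)$ for the corresponding $\check{I}_a$ (the base factor is the holomorphic volume form of $(M^{4n}, I_a)$ and the two fibre factors are $(1,0)$ by the assignment above). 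Differentiating $\check{\Upsilon}_1$, the base factor is closed since $\sigma_2,\sigma_3$ are, while $d(\nu_1 + i\nu_2) = \gamma_1 + i\gamma_2$ and $d(\nu_3 + i\nu_4) = \gamma_3 + i\gamma_4$, so $d\check{\Upsilon}_1$ is a sum of terms each containing a factor $(\sigma_2 + i\sigma_3)^n \w \gamma_j$.

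The crux of the argument, and the only place the hypothesis $\gamma_i \in \mathfrak{sp}(n)$ is used, is the vanishing $(\sigma_2 + i\sigma_3)^n \w \gamma_j = 0$. Indeed, by~(\ref{spnalgebra}) every $\gamma_j$ is of type $(1,1)$ with respect to every almost complex structure in $\mathcal{G}$, in particular $I_1$, whereas $(\sigma_2 + i\sigma_3)^n$ spans $\Lm^{2n,0}_{I_1}(M^{4n})$; their product would lie in $\Lm^{2n+1,1}_{I_1}(M^{4n})$, which is zero since $M^{4n}$ has complex dimension $2n$. Hence $d\check{\Upsilon}_1 = 0$, giving integrability of $\check{I}_1$, and the identical computation for $\check{\Upsilon}_2$ and $\check{\Upsilon}_3$ (using closedness of the $\sigma_a$ and that each $\gamma_j$ is $(1,1)$ for $I_2$ and $I_3$ as well) disposes of $\check{I}_2$ and $\check{I}_3$. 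I do not expect any obstacle beyond this type-vanishing: once it is in place the three cases are symmetric, and the assumption that the $\gamma_i$ be linearly independent or zero plays no role in this local computation, merely ensuring the $\mathbb{T}^4$ bundle is not reducible.
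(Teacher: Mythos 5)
Your proposal is correct and is essentially the paper's own proof, which simply repeats the argument of Theorem \ref{hypercomplextheorem} with $\al,\xi,\eta,\nu$ replaced by the $\nu_i$ and invokes (\ref{spnalgebra}) for the key type-vanishing $(\sigma_j+i\sigma_k)^n\w\gamma_l=0$. You have in fact spelled out the details more carefully than the paper does, correctly matching the $(2n+2,0)$-forms to the fibrewise complex structures determined by the stated $\check{\om}_a$.
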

\begin{proof}[Proof of Theorem \ref{extendedhypercomplextheorem}]
The proof consists of just repeating the argument in the proof of Theorem \ref{hypercomplextheorem} but replacing $\al$, $\xi$, $\eta$ and $\nu$ by $\nu_1$, $\nu_2$, $\nu_3$ and $\nu_4$ respectively and using (\ref{spnalgebra}). 
\end{proof}
Note that by contrast to Theorem \ref{maintheorem} and \ref{hypercomplextheorem}, in Theorem \ref{extendedhypercomplextheorem} we do not require that the hyperK\"ahler triple define integral cohomology classes. Examples of hypercomplex structures of this type have been studied by Fino and Dotti in \cite{Dotti2000} when $M=\mathbb{T}^4$ (see section \ref{T4example} below).
\begin{Rem}
The fact that $d\nu_i$ are curvature forms of type $(1,1)$ on $M^{4n}$ is equivalent to saying that the connection forms $\nu_i$ are (abelian) Hermitian Yang-Mills connections, and this is true with respect to any of the hyperK\"ahler triple. In the terminology of \cite{SalamonYM88} these connections are called self-dual instantons. Twistor theory asserts that self-dual instantons on HK (and QK manifolds) lift to the twistor space, see for instance section \ref{twistorsection}, to define holomorphic vector bundles cf. \cite{AtiyahHitchinSinger78, HKLR, Salamon1982}. By contrast, the connection forms of Theorem \ref{maintheorem} are called anti-self-dual instantons since the curvature forms are of type $(2,0)+(0,2)$ instead. In both cases however the connections are critical points of the Yang-Mills functional
\[\text{YM}(A):=\int_M\|F_A\|^2_{g_M}\vol_M,\]
where $A$ is a connection form and $F_A$ its curvature i.e. they satisfy $d*_MF_A=0$. The latter is readily seen from the fact that $*_{M}\sigma_i=c_0 \sigma_i^{2n-1}$ and $*_{M}\gamma_i=c_1 \gamma_i \w \sigma_1^{2n-2}$ for some constants $c_0,c_1.$
\end{Rem}
It is easy to see that one can replace the hypothesis that $M^{4n}$ is hyperK\"ahler with just hypercomplex and still conclude that $M^{4n+4}_{\nu_1,\nu_2,\nu_3,\nu_4}$ is hypercomplex. The only thing we lose in doing so is that there is no longer an $Sp(n+1)$-structure with trivial canonical bundle but instead just a $GL(n+1,\mathbb{H})$-structure (i.e. quaternionic structure) on the total space $M^{4n+4}_{\nu_1,\nu_2,\nu_3,\nu_4}$. 

 \section{Examples}\label{examples}
 
In this section we construct various examples that arise from Theorems \ref{maintheorem}, \ref{hypercomplextheorem} and \ref{extendedhypercomplextheorem}. We consider primarily the situation when $M$ is $4$-dimensional as this is the most well-understood one. 

We begin by considering the case when $M$ is compact. Kodaira's classification states that the only compact complex surfaces with trivial canonical bundle are $K3$ surfaces and $\mathbb{T}^4$ cf. \cite{Barth}. In the former case one needs to study the $K3$ lattice to find integral classes. We shall instead consider the latter case as it is more explicit. 
\subsection{Examples with $M=\mathbb{T}^4$}\label{T4example} Denote by $(x_1,x_2,x_3,x_4)$ the local coordinates on $\mathbb{T}^4$ and consider the standard HK triple:
 \begin{align*}
 	\sigma_1=dx_{12}+dx_{34},\\
 	\sigma_2=dx_{13}+dx_{42},\\
 	\sigma_3=dx_{14}+dx_{23},
 \end{align*}
which after suitable rescaling we may assume define integral cohomology classes. The manifolds $M^{5}_{\al}$, $M^{6}_{\al,\xi}$ and $M^{7}_{\al,\xi,\eta}$ then correspond to nilmanifolds with nilpotent Lie algebras
\begin{gather}
	(0,0,0,0,12+34),\\
	(0,0,0,0,12+34,13+42),\\
	(0,0,0,0,12+34,13+42,14+23),\label{hyperbolicAlg}
\end{gather}
in Salamon's notation cf. \cite{Salamoncomplexstructures}; in this convention $(0,0,0,0,12+23)$ represents the Lie algebra with an invariant coframing $e^i$ such that $de^1=de^2=de^3=de^4=0$ and $de^{5}=e^{12}+e^{34}.$

The Lie algebra (\ref{hyperbolicAlg}) corresponds to the quaternion Heisenberg group, which is the nilpotent part in the Iwasawa decomposition of the isometry group of the quaternionic hyperbolic space \cite{Dotti2000}. It is thus not surprising that the universal cover of $N^{8}$ corresponds to the quaternion hyperbolic space $\frac{Sp(2,1)}{Sp(2)Sp(1)}$ as was shown in \cite{Gibbons01}. 
More generally, we have that $\tilde{N}^{4n+4}=\frac{Sp(n+1,1)}{Sp(n+1)Sp(1)}$ when $M=\mathbb{T}^{4n}$.
 
Recall from Theorem \ref{hypercomplextheorem} that we need $\nu \in \mathfrak{sp}(n)$ to construct hypercomplex structures. In dimension $4$ this means that $\nu \in \Lm^{2}_-(M)$ i.e. $\nu$ is an anti-self-dual $2$-form.
So we can construct distinct hypercomplex structures by choosing $\nu$ in the $\mathbb{Z}$-module generated by 
$$\langle dx_{12}-dx_{34},\ dx_{13}-dx_{42},\ dx_{14}-dx_{23} \rangle.$$
For instance, if $\nu=dx_{12}-dx_{34}$ then $M^{8}_{\al,\xi,\eta,\nu}$ is the nilmanifold with Lie algebra
\[(0,0,0,0,12,34,13+42,14+23).\]
If one applies Theorem \ref{extendedhypercomplextheorem} with all $\nu_i=0$ then we just get the product hypercomplex structure on $M \times \mathbb{T}^4$.
If instead one sets 
$$\nu_1=dx_{12}-dx_{34},\ \ \nu_2= dx_{13}-dx_{42},\ \ \nu_3= dx_{14}-dx_{23},\ \ \nu_4=0$$ 
then we recover the abelian hypercomplex structure studied in \cite{Dotti2000} on
\[(0,0,0,0,0,12-34,13-42,14-23).\]
Note that although we get an isomorphic Lie algebra from Theorem \ref{hypercomplextheorem} with $\nu=0$, the hypercomplex structures are different. These examples all generalise naturally to the case when $M=\mathbb{T}^{4n}$, of course the space $\mathfrak{sp}(n)$ is then much larger so one can construct more examples. Next we consider the case when $M^4$ is non-compact.

\subsection{Examples from the Gibbons-Hawking ansatz}\label{ghansatz}
Consider an open set $B \subset \R^3$ with coordinates $\mathbf{u}=(u_1,u_2,u_3)$ and endowed with the flat Euclidean structure. Suppose that $V:B\to \R^+$ is a positive harmonic function such that $[-*dV]\in H^{2}(B,2\pi \mathbb{Z})$, where $*$ is the Hodge star operator on $B$, and denote by $M^4$ the associated $S^1$-bundle. 
The Gibbons-Hawking ansatz then asserts that 
\begin{align*}
	\sigma_1 &= \theta \w du_1 + V du_{23},\\
	\sigma_2 &=\theta \w du_2 + V du_{31},\\
	\sigma_3 &=\theta \w du_3 + V du_{12}
\end{align*}
define a HK triple on the $M^4$, where $\theta$ is a connection $1$-form such that $d\theta=-*dV$ \cite{GibbonsHawking}. If we take
\[V=c+\sum_{i=1}^k\frac{1}{2|\mathbf{u}-\mathbf{p}_i|},\]
where $\mathbf{p}_i$ are $k$ distinct points in $\R^3$ and $c\geq 0$ then we get complete HK metrics called multi-Taub-NUT if $c\neq 0$ and multi-Eguchi-Hanson if $c=0$. Euclidean $\R^4$ is a special case of the latter when $k=1$. Although the harmonic function $V$ is singular at $\mathbf{p}_i$ the HK metric $g_M$ extends smoothly to a fibration over all of $\R^3$ with the $S^1$ fibres collapsing to points over $\mathbf{p}_i$. 

Suppose now that $\mathbf{p}_i$ are chosen to lie on the line $u_2=u_3=0$ with $p_i<p_{i+1}$. It is well-known that $H_2(M,\mathbb{Z})$ is generated by complex curves of self-intersection $-2$. These curves can be constructed by lifting the line segment connecting $\mathbf{p}_i$ to $\mathbf{p}_{i+1}$ in $B$ to $M^4$. For instance if $k=2$ and $c=0$ then we get the usual Eguchi-Hanson metric on $T^*\C \mathbb{P}^1 \cong \mathcal{O}(-2).$
The condition that $[\sigma_i]$ define integral cohomology class becomes equivalent to asking that their integral over each of these curves is an integer. It is clear by construction that $[\sigma_2]$ and $[\sigma_3]$ always integrate to zero while integrating $[\sigma_1]$ gives $p_{i+1}-p_i$. So it suffices to choose $p_i \in \mathbb{Z}$ to obtain infinitely many distinct HK $4$-manifolds satisfying the hypothesis of Theorems \ref{maintheorem} and \ref{hypercomplextheorem}.
\begin{Rem}
Note that although the QK metric is completely determined by the harmonic function $V$, it is rather complicated to write down an explicit expression for $g_\Om$ using the Gibbons-Hawking coordinates since the connection forms $\al,\xi$ and $\eta$ are implicitly determined by $V$ i.e. it is hard to find primitive $1$-forms for $\sigma_i$ in the Gibbons-Hawking coordinates.
\end{Rem}
\begin{Rem}
	In \cite{LeBrunInfiniteQK} LeBrun showed that there exist small deformations of the quaternionic hyperbolic space yielding complete QK metrics on $\R^8$. On the other hand we know that the Taub-NUT metric can be viewed as a deformation of the flat metric on $\R^4$, thus it seems plausible that when $M^4$ is taken to be the Taub-NUT space then $N^8$ arising from Theorem \ref{maintheorem} is a special case of those of LeBrun.
\end{Rem}
There is also a higher dimensional version of the Gibbons-Hawking ansatz for constructing toric HK $4n$-manifolds cf. \cite{BielawskiDancer2000,PetersenPoon1988} that one can use to find suitable higher dimensional HK manifolds as above.
If one is only interested in local examples however then one can simply take $M^{4n}$ to be an open ball in any HK manifold since locally we always have $[\sigma_i]=0$.
Note that the integrality condition on the HK triple can be circumvented if one chooses instead to work on principal $\R$-bundles rather than $S^1$-bundles on $M^{4n}$ cf. \cite[Sec. 3(E)]{HKLR}. 

At this point one might ask if given a Killing vector field on $M$, can one lift it to a Killing vector field on the total spaces $Q,P,L,N$ of Theorem \ref{maintheorem}?
To address this question in the next section we study the infinitesimal symmetries of the Einstein manifolds constructed in Theorem \ref{maintheorem}.

\section{Symmetries and moment maps}\label{symmetriessection}
Analogous to the well-known Marsden-Weinstein symplectic reduction and its hyperK\"ahler extension described in \cite{HKLR},  Galicki-Lawson developed the theory of QK reduction in \cite{Galicki1988}. This gives a way of taking quotients of QK manifolds to produce new QK manifolds. In this section we describe how one can apply this construction to the QK manifolds $N^{4n+4}$ of Theorem \ref{maintheorem}.
\subsection{The QK reduction} 
We recall briefly the QK reduction. Let $(N,g_N,\Om_{N})$ be a general QK manifold and let $\tilde{X}$ be a Killing vector field such that $\mathcal{L}_{\tilde{X}}\Om_{N}=0$. Then Galicki-Lawson show that there exists a unique map $f_{\tilde{X}}:N \to \mathcal{G}$ such that 
\[df_{\tilde{X}}=\tilde{X} \ip \Om_{N} \]
called the QK moment map for the action generated by $\tilde{X}$. This applies more generally when we have a $G$-action preserving $g_N$ and $\Om_{N}$ but we shall restrict to the case when $G$ is $1$-dimensional i.e. generated by a single vector field $\tilde{X}$. 

If $G$ acts freely and properly on $f_{\tilde{X}}^{-1}(0)$ then the quotient space
\[N_{red}:={f_{\tilde{X}}^{-1}(0)}/{G}\]
is a QK manifold cf. \cite{Galicki1988}. 
If the $G$-action is only locally free then $N_{red}$ is instead an orbifold. 
Note that in contrast to the symplectic reduction, however, one can only take the quotient at the zero section in $\mathcal{G}$. This is essentially due to the fact that there are no ``constants'' in $\mathcal{G}$ to add, however as we shall see below in our case one can consider the level sets at $\om_i$. 

\subsection{Reduction using tri-holomorphic Killing vector fields}
Let $X$ be a tri-holomorphic Killing vector field on $M^{4n}$ i.e. $\mathcal{L}_X \sigma_i=0$ for $i=1,2,3$. Working locally there exists $1$-forms $\kappa_i$ such that 
\begin{equation}
	\sigma_i=d\kappa_i,\label{exactsymplec}
\end{equation}
and by choosing local coordinates $y_1,y_2,y_3$ on the $\mathbb{T}^3$ fibres of ${M}^{4n+3}_{\al,\xi,\eta}$ we can write
\begin{align}
	\al &=\ dy_1 + \kappa_1,\label{list1}\\
	\xi &=\ dy_2 - \kappa_2,\\
	\eta &=\ dy_3 - \kappa_3.
\end{align}
By Poincar\'e Lemma we can choose $\kappa_i$ so that $\mathcal{L}_X \kappa_i=0$ and hence the natural lift $\tilde{X}$ of $X$ (satisfying $dy_i(\tilde{X})=0$) to $N$ preserves $\al, \xi$ and $\eta$. 
The functions
\begin{align}
	\mu_\al &:=\ -\tilde{X} \ip \al,\\
	\mu_\xi &:=\ \tilde{X} \ip \xi,\\
	\mu_\eta &:=\ \tilde{X} \ip \eta,\label{list6}
\end{align}
then define the HK moment maps for the action generated by $X$ on $M$ since from Cartan's formula we have
\[d\mu_\al=-d(\tilde{X} \ip \al)=-\mathcal{L}_{\tilde{X}}\al+X\ip d\al=X\ip \sigma_1,\]
and likewise for $\mu_\eta$ and $\mu_\eta$. It is clear by construction that $\tilde{X}$ is Killing and preserves $\Om_{N}$. We compute the QK moment map for the action generated by $\tilde{X}$ on $N$ as
\begin{equation}
	f_{\tilde{X}}=e^{2t}(\mu_\al \om_1+\mu_\xi \om_2 + \mu_\eta \om_3).
\end{equation}
Indeed one verifies using (\ref{qkequ}) that
\begin{align*}
\tilde{X}\ip \Om_{N} =\ &(e^{2t}d\mu_\al+4e^{4t}\mu_\xi \eta-4e^{4t}\mu_\eta \xi +2e^{2t}\mu_\al dt) \w \om_1\ + \\
&(e^{2t}d\mu_\xi-4e^{4t}\mu_\al \eta-4e^{4t}\mu_\eta \al +2e^{2t}\mu_\xi dt) \w \om_2\ +\\ &(e^{2t}d\mu_\eta+4e^{4t}\mu_\xi \al+4e^{4t}\mu_\al \xi +2e^{2t}\mu_\eta dt) \w \om_3 \\
=\ &df_{\tilde{X}}
\end{align*}
The zero set $f_{\tilde{X}}^{-1}(0) \subset N$ corresponds to the bundle $N\to M$ restricted to the set $(\mu_\al,\mu_\xi,\mu_\eta)^{-1}(0) \subset M$. Taking the quotient we see that $N_{red}$ is just the QK manifold we get from Theorem \ref{maintheorem} by taking the HK base to be $M_{red}$, the HK quotient of $M$ by the action generated by $X$ \cite{HKLR}. In other words, we have:
\begin{Prop}
The construction of Theorem \ref{maintheorem} commutes with the QK reduction of $N$ and  the HK reduction of $M$:
\begin{center}
	\begin{tikzcd}[column sep=tiny]
		N \ar[d,"T^3 \times \R",swap]\ar[rr,"/QK"]
		& [1.5em]
		&N_{red} \ar[d,"T^3 \times \R"]\\
		M \ar[rr,"/HK"]
		&[1.5em]
		& M_{red}
	\end{tikzcd}
\end{center}
\end{Prop}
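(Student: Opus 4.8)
The plan is to read the statement off the explicit description of the QK moment map $f_{\tilde{X}}$ already obtained above, the crux being that every ingredient of the construction of Theorem \ref{maintheorem} is basic for the lifted action $\tilde{X}$ and hence descends to the hyperK\"ahler quotient. First I would pin down the zero level set. Since $e^{2t}>0$ and the forms $\om_1,\om_2,\om_3$ are pointwise linearly independent (they frame the subbundle $\mathcal{G}$), the equation $f_{\tilde{X}}=e^{2t}(\mu_\al \om_1+\mu_\xi \om_2+\mu_\eta \om_3)=0$ forces $\mu_\al=\mu_\xi=\mu_\eta=0$. Because $\tilde{X}$ is horizontal with $dy_i(\tilde{X})=0$, the functions $\mu_\al,\mu_\xi,\mu_\eta$ reduce to contractions $\pm X\ip\kappa_i$ of the local primitives $\kappa_i$ with $X$, and are therefore pullbacks of functions on $M$, namely the components of the HK moment map $\mu=(\mu_\al,\mu_\xi,\mu_\eta)$ of $X$. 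Writing $\pi:N\to M$ for the projection, this yields $f_{\tilde{X}}^{-1}(0)=\pi^{-1}(\mu^{-1}(0))$, so the zero set is precisely the total space of the $\mathbb{T}^3\times\R$ bundle of Theorem \ref{maintheorem} restricted over $\mu^{-1}(0)\subset M$.

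Next I would check that all the data descend to the quotient. On $f_{\tilde{X}}^{-1}(0)$ the forms $\al,\xi,\eta$ are $\tilde{X}$-invariant ($\mathcal{L}_{\tilde{X}}\al=\mathcal{L}_{\tilde{X}}\xi=\mathcal{L}_{\tilde{X}}\eta=0$, as shown above) and $\tilde{X}$-horizontal ($\tilde{X}\ip\al=-\mu_\al=0$, and likewise for $\xi,\eta$, on the zero set), hence basic; similarly $t$ is fixed by $\tilde{X}$ since $dt(\tilde{X})=0$. Thus $\al,\xi,\eta$ and $t$ push down to $N_{red}$. Meanwhile the hyperK\"ahler reduction \cite{HKLR} equips $M_{red}=\mu^{-1}(0)/G$ with its reduced HK structure $(g_{M_{red}},\sigma_1^{red},\sigma_2^{red},\sigma_3^{red})$, characterised by $p^*\sigma_i^{red}=\iota^*\sigma_i$ for the inclusion $\iota:\mu^{-1}(0)\hookrightarrow M$ and the quotient $p:\mu^{-1}(0)\to M_{red}$. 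Differentiating the basic forms and using $d\al=\sigma_1$, $d\xi=-\sigma_2$, $d\eta=-\sigma_3$ shows the descended connection forms $\al^{red},\xi^{red},\eta^{red}$ satisfy exactly $d\al^{red}=\sigma_1^{red}$, $d\xi^{red}=-\sigma_2^{red}$, $d\eta^{red}=-\sigma_3^{red}$. Substituting these into (\ref{qkmetric}) identifies the reduced QK metric on $N_{red}$ with the one produced by Theorem \ref{maintheorem} from the base $M_{red}$, which is exactly the commutativity asserted by the diagram.

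The step I expect to be the main obstacle is the compatibility of the two reductions: one must verify that the connection forms are genuinely basic on the zero set (not merely invariant), so that $\al^{red},\xi^{red},\eta^{red}$ exist, and that their curvatures are precisely the \emph{reduced} symplectic forms $\sigma_i^{red}$ rather than some other representative on $M_{red}$. This is where the QK moment map level set and the HK moment map level set have to be matched carefully, and it relies on the choice $\mathcal{L}_X\kappa_i=0$ made via the Poincar\'e Lemma. The remaining bookkeeping is routine but should be stated: one checks that $G$ acts freely and properly on $f_{\tilde{X}}^{-1}(0)$ exactly when it does so on $\mu^{-1}(0)$ (so that both quotients are smooth manifolds, or both orbifolds), and that the $\mathbb{T}^3\times\R$ fibre directions are untouched by $\tilde{X}$, so that $N_{red}$ inherits the expected $\mathbb{T}^3\times\R$ bundle structure over $M_{red}$.
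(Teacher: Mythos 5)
Your proposal is correct and follows essentially the same route as the paper, which justifies the Proposition in two sentences by identifying $f_{\tilde{X}}^{-1}(0)$ with the restriction of the bundle $N\to M$ over $(\mu_\al,\mu_\xi,\mu_\eta)^{-1}(0)$ and then taking the quotient. Your write-up simply fills in the details the paper leaves implicit (linear independence of the $\om_i$, basicness of the connection forms on the zero level set, and the matching of curvatures with the reduced K\"ahler forms), all of which check out.
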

As a trivial example consider the case when $M$ is a HK $4$-manifold. The HK moment map then corresponds to the functions $(x_1,x_2,x_3)$ in the notation of subsection \ref{ghansatz}. $M_{red}$ can be viewed as a discrete set of points in $M$ (say by fixing a point in each $S^1$ orbit) and $N_{red}$ is just the fibres over these points with the hyperbolic QK metric
\[g_{red}=dt^2+4e^{4t}(dy_1^2+dy_2^2+dy_3^2).\] 
\subsection{Reduction using permuting vector fields}\label{permutingsubsection}
Let us now assume that $X$ is a permuting Killing vector field on $M^{4n}$ i.e. it preserves one of K\"ahler forms and rotates the other two. Note that any Killing vector field on a HK manifold is either tri-holomorphic as above or is permuting. Without loss of generally we can then assume that $\mathcal{L}_X \sigma_1=0$ while $\mathcal{L}_X \sigma_2= -2\sigma_3$ and $\mathcal{L}_X \sigma_3= +2\sigma_2$.  We choose $\kappa_1$ as before and set
\begin{align}
	\kappa_2 &:= +\frac{1}{2}(X \ip \sigma_3),\label{perm1}\\
	\kappa_3 &:= -\frac{1}{2}(X \ip \sigma_2).\label{perm2}
\end{align}
This implies that (\ref{exactsymplec}) holds and that
\begin{equation}
\mathcal{L}_X \kappa_1=0, \ \ \ \ \mathcal{L}_X \kappa_2=-2\kappa_3,\ \ \ \ \mathcal{L}_X \kappa_3=+2\kappa_2.\label{permuting}\end{equation} 
We now define a lift of $X$ to $N$ by
\begin{equation}
	\tilde{X}=X-2\cdot (y_3 \partial_{y_2}-y_2 \partial_{y_3})
\end{equation}
and define $\al,\xi,\eta,\mu_\al,\mu_\xi,\mu_\eta$ by expressions (\ref{list1})-(\ref{list6}) as before.
An analogous computation shows that $\tilde{X}$ is Killing and $\mathcal{L}_{\tilde{X}} \Om_{N}=0$ (since by construction $\tilde{X}$ is also permuting on $\om_1,\om_2,\om_3$). We compute the QK moment map for $\tilde{X}$ as
\begin{equation}
	f_{\tilde{X}}=e^{2t}((\mu_\al-\frac{1}{2}e^{-2t}) \om_1+\mu_\xi \om_2 + \mu_\eta \om_3).
\end{equation}
Note that unlike before the functions $\mu_\xi,\mu_\eta$ cannot be interpreted as moment maps but $\mu_\al$ is still a K\"ahler moment map on $M^{4n}$ for $X$. We refer the reader to \cite{Hitchin2000} for several examples of permuting Killing vector fields.

We shall re-examine this construction in greater detail in section \ref{HKQKsection} below.
\subsection{Reduction using homothetic Killing vector fields}\label{homotheticsection}
Let us now assume that $X$ is a homothetic Killing vector field so that $\mathcal{L}_X \sigma_i=-2\sigma_i$ and define
\begin{equation}
	\kappa_i := -\frac{1}{2}(X \ip \sigma_i).\label{homotheticdef}
\end{equation}
This implies that (\ref{exactsymplec}) holds and that
\begin{equation}
	\mathcal{L}_X \kappa_i=-2\kappa_i.\label{homotheticequ}
\end{equation} 
We define a lift of $X$ to $N$ by
\begin{equation}
	\tilde{X}=X-2\cdot (y_1 \partial_{y_1}+y_2 \partial_{y_2}+y_3 \partial_{y_3})+\partial_t
\end{equation}
and define $\al,\xi,\eta,\mu_\al,\mu_\xi,\mu_\eta$ by expressions (\ref{list1})-(\ref{list6}).
A simple computation shows that $\mathcal{L}_{\tilde{X}} \Om_{N}=0$ and we find the QK moment map for $\tilde{X}$ is
\begin{equation}
	f_{\tilde{X}}=e^{2t}(\mu_\al \om_1+\mu_\xi \om_2 + \mu_\eta \om_3).
\end{equation}
\begin{Rem}\label{remark}
	It's worth pointing out that we only used that  $\kappa_i$ satisfy relations (\ref{exactsymplec}) and (\ref{homotheticequ}) to compute the moment map. In particular, one can choose $\kappa_i$ different from (\ref{homotheticdef}). This amounts to modifying $\kappa_i$ by a suitable closed $1$-form.
\end{Rem}
Lastly note that the vertical Killing vector fields $\partial_{y_1},\partial_{y_2},\partial_{y_3}$ always preserve $\Om_{N}$. The corresponding QK moment map for 
\begin{equation}
	Y=-a\partial_{y_1}+b\partial_{y_2}+c\partial_{y_3},\label{verticalvf}
\end{equation}
where $a,b,c \in \R$ is given by  
\begin{equation}
	f_{Y}=e^{2t}(a \om_1+b\om_2 + c \om_3).
\end{equation}
This moment map is nowhere vanishing so $N_{red}$ would be an empty set in this case. However, by taking linear combinations of $Y$ with $\tilde{X}$ in either of the previous $3$ cases we may add constants to the corresponding moment maps.
\begin{Rem}
	It is also easy to see that the vector fields $Y$ and $\tilde{X}$ (modified in the obvious way) are also Killing for the metrics $g_Q, g_P, g_L$ of Theorem \ref{maintheorem}. Moreover, we get a Hamiltonian action on $P^{4n+2}$ so one can investigate the K\"ahler reduction in this case.
\end{Rem}
\subsection{An explicit example}\label{explicithomotheticexample}
We illustrate the above reduction explicitly in the case when $M=\R^4$ endowed with a homothetic Killing vector field. 

Denoting by $(x_1,x_2,x_3,x_4)$ the coordinates on $\R^4$ we consider the standard HK triple as in section \ref{T4example}.
We take the homothetic Killing vector field
\[U=-x_1\partial_{x_1}-x_2\partial_{x_2}-x_3\partial_{x_3}-x_4\partial_{x_4},\]
which corresponds to scaling along the radial direction. From the definition of $\kappa_i$ we find that the connection forms are given by
\begin{gather*}
	\al=dy_1-\frac{1}{2}(x_2dx_1-x_1dx_2+x_4dx_3-x_3dx_4),\\
	\xi=dy_2+\frac{1}{2}(x_3dx_1-x_4dx_2-x_1dx_3+x_2dx_4),\\
	\eta=dy_3+\frac{1}{2}(x_4dx_1+x_3dx_2-x_2dx_3-x_1dx_4),
\end{gather*}
and the moment map becomes
\[f_{\tilde{U}}=2e^{2t}(y_1 \om_1-y_2\om_2-y_3 \om_3).\]
The reduction now yields a QK metric on $N_{red}\simeq\R^4$. In general, identifying the quotient metric explicitly is quite hard, however, in this rather simple situation we can write
\begin{equation}
	g_N\big|_{f^{-1}_{\tilde{U}}(0)}=y\big(dt -\frac{1}{2}d (\ln(y))\big)^2 + g_{red}
\end{equation}
where $y:=\cosh^{-1}(\sqrt{1+r^2e^{2t}})$ and $r:=\sqrt{x_1^2+x_2^2+x_3^2+x_4^2}$. The $1$-form $dt -\frac{1}{2}d (\ln(y))$ is just the canonical Riemannian connection for the action generated by $\tilde{U}$ i.e.
\[dt -\frac{1}{2}d (\ln(y))=\frac{g_\Om\big|_{f^{-1}(0)}(\tilde{U},\cdot)}{g_\Om\big|_{f^{-1}(0)}(\tilde{U},\tilde{U})}.\]
and the reduced QK metric $g_{red}$ can be explicitly expressed as
\begin{equation}
	g_{red}=dy^2+\sinh^2(y)\cosh^2(y)g_{S^3}.
\end{equation}
The above calculation is easily carried out using spherical  coordinates on $\R^4$. This quotient corresponds to the QK symmetric space $\frac{Sp(1,1)}{Sp(1)Sp(1)}$.
\begin{Rem}
	In view of remark \ref{remark} one can take say $\kappa_1$ to be $-x_2dx_1-x_4dx_3$ instead. Repeating the above calculation one can show that the quotient metric $g_{red}$ remains unchanged although the QK moment $f_{\tilde{U}}$ changes.
\end{Rem}
An example of a permuting Killing action in this situation is given by the diagonal action of $U(1)$ on $\C^2 \cong \R^4$ generated by the vector field
\[V=-x_2\partial_{x_1}+x_1\partial_{x_2}-x_4\partial_{x_3}+x_3\partial_{x_4}\]
and an example of a tri-holomorphic Killing vector field is given by
\[W=-x_2\partial_{x_1}+x_1\partial_{x_2}+x_4\partial_{x_3}-x_3\partial_{x_4}\]
corresponding to the action of the diagonal $U(1)$ in $SU(2)$ on $\C^2$. Taking linear combination of these vector fields one can investigate more general reductions of $N^8$. 
The QK moment map for the Killing vector field $\tilde{X}=u\tilde{U}+v\tilde{V}+w\tilde{W}+Y$, where $u,v,w \in \R$ is given by
\begin{align}
f_{\tilde{X}} =\ e^{2t}\big(&(a+2uy_1-\frac{v}{2}(r^2+e^{-2t})-\frac{w}{2}(x_1^2+x_2^2-x_3^2-x_4^2)) \om_1\ \label{generalQK}+\\ &(b-2uy_2-2vy_3-w(x_1x_4+x_2x_3))\om_2\ +\nonumber\\ 
&(c-2uy_3+2vy_2+w(x_1x_3-x_2x_4)) \om_3\big).\nonumber
\end{align}
Identifying the reduced metric explicitly appears to be rather hard in this more general situation.

\subsubsection{$3$-Sasakian manifolds}\label{3sasakian}
The above simple example can more generally be applied to any $3$-Sasakian manifold $\mathcal{S}^{4n-1}$. There are several equivalent definitions of $3$-Sasakian manifolds cf. \cite{GalickiSurvey3sasaki,GalickiSalamon1996}, the simplest to state is that they are Riemannian manifolds whose cone metric is hyperK\"ahler. More concretely, this means that they admit $3$ unit Killing vector fields, or equivalently using the metric a triple of $1$-forms $\gamma_i$, spanning a rank $3$ subbundle $\mathcal{F}$ of $T\mathcal{S}$ such that
$$d\gamma_i=2\gamma_{j}\w \gamma_{k}+2\check{\sigma}_i, $$
where $(i,j,k)$ are cyclic permutations of $(1,2,3)$ and 
with $\check{\sigma}_i$ defining a $Sp(n-1)$-structure on $\mathcal{F}^{\perp}.$ The hyperK\"ahler triple on the cone $M^{4n}:=\R^+_r \times \mathcal{S}$ are then given by
\[\sigma_i:=\frac{1}{2}d(r^2 \gamma_i).\]
The homothetic Killing vector field $X=-r\partial_r$ (i.e. the Euler vector field) satisfies the hypothesis of subsection \ref{homotheticsection}. Repeating the exact calculation as in case with $M=\R^4$ shows that 
\begin{equation}
	g_{red}=dy^2+\sinh^2(y) \cosh^2(y)(\gamma_1^2+\gamma_2^2+\gamma_3^2)+\sinh^2(y) g_{\mathcal{F}^\perp},\label{sasakinegative}\end{equation}
where $g_{\mathcal{F}^\perp}$ denotes the $3$-Sasakian metric restricted to $\mathcal{F}^\perp$. 

Note that this metric is complete only if $\mathcal{S}$ is the round sphere in which case $g_{red}$ corresponds to the quaternionic hyperbolic metric whereas the HK cone metric on $M$ corresponds to the flat Euclidean metric. Otherwise, for general $\mathcal{S}$ we have an isolated conical singularity at $y=0$.\\

More generally, if $X$ is a homothetic Killing vector field on HK $M^{4n}$ then following the same strategy as in the $M=\R^4$ case we can express $g_{red}$ as
\begin{gather}
g_{red}=dt^2+4e^{4t}(\sum_{i=1}^3\kappa_i^2)+e^{2t}g_M-(1+e^{2t}\|X\|_{g_M}^2)^{-1}(dt+e^{2t}g_M(X,\cdot))^2.\label{generalredmetrichomothetic}
\end{gather}

\subsubsection{Local examples from the Gibbons-Hawking ansatz}
If one chooses $V=u_1$ in the Gibbons-Hawking ansatz, see subsection \ref{ghansatz}, so that locally the connection form is $\theta=dy+u_3du_2$, where $y$ denotes the coordinate of the $S^1$ fibre then we still get a HK metric but it is incomplete. We leave it to the reader to verify that 
\[X=-\frac{2}{3}(2y\partial_y+u_1\partial_{u_1}+u_2\partial_{u_2}+u_3\partial_{u_3})\]
 is a homothetic Killing vector field satisfying the hypothesis of section \ref{homotheticsection}. Unlike in the previous case, however, this is not a gradient vector field. One can find many such examples that by choosing suitable $V$ and thus one can apply the above quotient construction. However, as $X$ is not a gradient vector field it appears to be much harder to find explicit local coordinates on the quotient and hence identify the reduced metric (\ref{generalredmetrichomothetic}). 

\subsection{QK products}
In general the product of two QK manifolds is not QK. In \cite{Swann1991} Swann gave a way of joining two QK manifolds $N_1, N_2$ of positive scalar curvature to construct a QK manifold of dimension $\dim(N_1)+\dim(N_2)+4$. We now describe a similar join construction which applies to the quotient manifolds of section \ref{homotheticsection}. 

Let $M_1, M_2$ be HK manifolds with homothetic Killing vector fields $X_1$ and $X_2$ respectively as in section \ref{homotheticsection}. The above quotient construction then produces two (negative scalar curvature) QK manifolds $M_{1red}$ and $M_{2red}$. Of course the product metric on $M_{1red} \times M_{2red}$ is not QK but one can still define a ``QK product'' of these spaces as follows. Observe that $M_1 \times M_2$ is a HK manifold with the standard product structure and the vector field $X_1+X_2$ defines a diagonal homothetic action. Applying the construction of section \ref{homotheticsection} to the data $(M_1 \times M_2, X_1+X_2)$ yields a QK manifold of dimension $\dim(M_{1red})+\dim(M_{2red})$ which we denote by $\mathcal{J}(M_1,M_2)$: the QK product of  $M_{1red}$ and $M_{2red}$. 

Here is an example. Let $M_{i}:=\R^+_{x_i}\times \mathcal{S}_i$ be the HK cone of the $3$-Sasakian manifold $\mathcal{S}_{i}$ for $i=1,2.$ Then product HK metric is given by
\begin{align}
g_{M_1}+g_{M_2} =& \ dx_1^2+dx_2^2 + x_1^2 g_{\mathcal{S}_1}+ x_2^2 g_{\mathcal{S}_2}\nonumber\\ 
=&\ dr^2 + r^{2}(d\theta^2+ \cos^2(\theta) g_{\mathcal{S}_1} + \sin^2(\theta) g_{\mathcal{S}_2}),
\end{align}
where we use polar coordinates $x_1=r\cos(\theta)$ and $x_2=r\sin(\theta)$. The metric 
 \begin{equation}
 	d\theta^2+ \cos^2(\theta) g_{\mathcal{S}_1} + \sin^2(\theta) g_{\mathcal{S}_2} \label{sinecone}
  \end{equation}
 is of course again a $3$-Sasakian metric. For instance, if one takes both $\mathcal{S}_{i}$ to be the standard $3$-sphere $S^3$ then (\ref{sinecone}) corresponds to the round metric on $S^7.$ The QK metric on $\mathcal{J}(M_1,M_2)$ can be explicitly worked out from (\ref{sasakinegative}).
 
 When $M_1=\R^{4n}$ and $M_2=\R^{4k}$ with the homothetic radial Killing vector fields as above we already saw that $M_{1red}=\frac{Sp(n,1)}{Sp(n)Sp(1)}$ and $M_{2red}=\frac{Sp(k,1)}{Sp(k)Sp(1)}$. Thus we deduce that the QK product is
 \begin{equation}
 	\mathcal{J}(\R^{4n},\R^{4k}) =\frac{Sp(n+k,1)}{Sp(n+k)Sp(1)}.
 \end{equation}
  This simple example shows that the name QK product is rather apt.
   \begin{Rem}
 The metric (\ref{sinecone}) is closely related to the so-called sine-cone construction: if $\mathcal{M}$ is a positive scalar curvature Einstein metric then its sine-cone is the product space $[0,\pi]\times \mathcal{M}$ endowed with the metric
 \begin{equation}
 	d\theta^2+\sin^2(\theta)g_{M}.\label{scmetric}
 \end{equation}
Although these metrics are singular at $\theta=0,\pi$ (except for round spheres) the singularities are rather mild as they are only conical singularities. In \cite{Foscolo2015a} Foscolo-Haskins found exotic Einstein metrics on $S^6$ and $S^3\times S^3$ by desingularising suitable sine-cones. The sine-cone metric (\ref{scmetric}) can be viewed as a special case of (\ref{sinecone}) when $\mathcal{S}_1$ is taken to be a point.
   \end{Rem}
\section{A HK/QK type correspondence}\label{HKQKsection}

The goal of this section is to prove that the QK reduction using a permuting Killing vector field $X$ as in subsection \ref{permutingsubsection} can in fact be inverted. More precisely, we can reconstruct the hyperK\"ahler manifold $(M^{4n},\sigma_1,\sigma_2,\sigma_3)$ together with the vector field $X$ from suitable data on $N^{4n}_{red}:=N^{4n+4}/\!\!/\!\!/\!\!/ S^1_{\tilde{X}}$. This is closely related to the construction of Haydys in \cite{Haydys2008} whereby he shows that the HK quotient of the Swann bundle by a (lifted) $S^1$ action can be inverted to recover the original (positive scalar curvature) QK manifold. By contrast our construction gives a negative scalar curvature version of this correspondence, which was precisely given in \cite{Alekseevsky2015, Alekseevsky2013} by different methods.
\subsection{The permuting quotient}\label{thepermutingsubsection}
Before describing the inverse construction we first need to understand better the induced geometric structure on the quotient $N^{4n}_{red}$.

Given a permuting Killing vector field $X$ on $M^{4n}$ we consider the more general lift
\[\tilde{X}=X-2\cdot (y_3 \partial_{y_2}-y_2 \partial_{y_3})-\frac{a}{2}\partial_{y_1}\]
to $N^{4n+4}$. Then from the results of the previous section we know that the QK moment map is given by
\begin{equation}
	f_{\tilde{X}}=e^{2t}((\frac{a}{2}-\frac{1}{2}e^{-2t}-X \ip \kappa_1) \om_1-2y_3 \om_2 +2y_2 \om_3)
\end{equation}
and hence $f_{\tilde{X}}^{-1}(0)=\{y_2=0, y_3=0, t=-\frac{1}{2}\log(a-2(X\ip\kappa_1))\}$. It is worth pointing out that the presence of the constant $a$ is important as we shall see in subsection \ref{examplepermutingsection}.

For the rest of this section we shall work on $f_{\tilde{X}}^{-1}(0)$ and pullback the data on $N^{4n+4}$
to the submanifold $f_{\tilde{X}}^{-1}(0)$. It is worth highlighting that $f_{\tilde{X}}^{-1}(0)$ is diffeomorphic to $M^{4n+1}_{\alpha}$.
By abuse of notation we denote the pullbacked differential forms and metric by the same expression.
The $S^1$ action generated by the Killing vector field $\tilde{X}$ on $f_{\tilde{X}}^{-1}(0)$ can be identified with
\[\tilde{X}\big|_{f_{\tilde{X}}^{-1}(0)}=X-\frac{a}{2}\partial_{y_1}.\]
We shall also denote the latter by $\tilde{X}$ to ease notation. 

\begin{Prop}
	The differential forms $\om_1$ and $\Om$ descend to the quotient $N^{4n}_{red}$.
\end{Prop}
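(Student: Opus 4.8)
The plan is to use the standard criterion that a differential form on the level set $f_{\tilde{X}}^{-1}(0)$ descends to $N^{4n}_{red}=f_{\tilde{X}}^{-1}(0)/S^1_{\tilde{X}}$ exactly when it is \emph{basic}, i.e.\ both invariant under $\tilde{X}$ and horizontal after restriction to $f_{\tilde{X}}^{-1}(0)$. Throughout I write $\iota\colon f_{\tilde{X}}^{-1}(0)\hookrightarrow N^{4n+4}$ for the inclusion and use that $\tilde{X}$ is tangent to $f_{\tilde{X}}^{-1}(0)$ (indeed $\tilde{X}|_{f_{\tilde{X}}^{-1}(0)}=X-\tfrac{a}{2}\partial_{y_1}$), so that horizontality of a form $\beta$ reduces to $\iota^{*}(\tilde{X}\ip\beta)=0$.

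For $\Om:=\Om_{N}$ both conditions are essentially formal. Invariance is $\mathcal{L}_{\tilde{X}}\Om_{N}=0$, already established in subsection \ref{permutingsubsection} (the extra vertical term $-\tfrac{a}{2}\partial_{y_1}$ preserves $\Om_N$ as noted there). For horizontality I would use the defining property of the moment map, $\tilde{X}\ip\Om_{N}=df_{\tilde{X}}$. Since $f_{\tilde{X}}=e^{2t}\big((\tfrac{a}{2}-\tfrac12 e^{-2t}-X\ip\kappa_1)\om_1-2y_3\om_2+2y_2\om_3\big)$ and $\om_1,\om_2,\om_3$ are pointwise linearly independent, the locus $f_{\tilde{X}}^{-1}(0)$ is exactly where all three coefficient functions vanish; hence $\iota^{*}f_{\tilde{X}}=0$ identically and therefore $\iota^{*}(\tilde{X}\ip\Om_{N})=\iota^{*}(df_{\tilde{X}})=d(\iota^{*}f_{\tilde{X}})=0$. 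Thus $\Om$ is basic and descends. This is the exact analogue of the fact that in symplectic and hyperK\"ahler reduction the relevant forms always descend from the zero level set.

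The form $\om_1$ is the genuinely non-formal case and is where I expect the only real work to lie. Invariance $\mathcal{L}_{\tilde{X}}\om_1=0$ holds because $\tilde{X}$ is permuting with $\om_1$ as its fixed K\"ahler form. For horizontality I would compute $\tilde{X}\ip\om_1$ directly from the expression for $\om_1$ in Theorem \ref{maintheorem} and restrict to $f_{\tilde{X}}^{-1}(0)$. Two simplifications drive the computation. First, by (\ref{perm1})--(\ref{perm2}) one has $X\ip\kappa_2=\tfrac12\,X\ip(X\ip\sigma_3)=0$ and likewise $X\ip\kappa_3=0$, since contracting a $2$-form twice with the same vector vanishes; together with $y_2=y_3=0$ this forces $\tilde{X}\ip\xi=\tilde{X}\ip\eta=0$ on the level set and kills the fibre term $4e^{4t}\big((\tilde{X}\ip\xi)\,\eta-(\tilde{X}\ip\eta)\,\xi\big)$. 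Second, $\tilde{X}\ip\al=-(\tfrac{a}{2}-X\ip\kappa_1)=-\tfrac12 e^{-2t}$ on $f_{\tilde{X}}^{-1}(0)$, precisely by the equation $t=-\tfrac12\log(a-2(X\ip\kappa_1))$ cutting out the level set. These reduce $\tilde{X}\ip\om_1$ to $e^{2t}(X\ip\sigma_1)+dt$ along $f_{\tilde{X}}^{-1}(0)$. The final step is the key cancellation: differentiating the defining relation gives $\iota^{*}dt=e^{2t}\,d(X\ip\kappa_1)$, while $X\ip\sigma_1=-d(X\ip\kappa_1)$ follows from $\mathcal{L}_X\kappa_1=0$, so the two contributions cancel and $\iota^{*}(\tilde{X}\ip\om_1)=0$. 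Hence $\om_1$ is basic and descends.

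The main obstacle is exactly this last horizontality check for $\om_1$: unlike $\Om$, it does not follow from a moment-map identity but relies on the precise form of the permuting lift, on the vanishing $X\ip\kappa_2=X\ip\kappa_3=0$, and on the exact shape of the equation defining the level set, which is what produces the cancellation between $e^{2t}(X\ip\sigma_1)$ and $\iota^{*}dt$.
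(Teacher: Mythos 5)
Your proof is correct and takes essentially the same route as the paper: the heart of both arguments is the direct verification that $\tilde{X}\ip\om_1$ pulls back to zero on $f_{\tilde{X}}^{-1}(0)$, via exactly the cancellation you identify between $e^{2t}(X\ip\sigma_1)=-e^{2t}d(X\ip\kappa_1)$ and $\iota^*dt=e^{2t}d(X\ip\kappa_1)$ coming from the level-set equation $t=-\tfrac12\log(a-2(X\ip\kappa_1))$. For $\Om$ the paper merely says ``an analogous argument applies,'' and your moment-map observation that $\iota^*(\tilde{X}\ip\Om_N)=\iota^*(df_{\tilde{X}})=0$ because the coefficient functions of $f_{\tilde{X}}$ vanish identically along the level set is a clean way of carrying that out.
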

\begin{proof}
 Using the fact that $t=-\frac{1}{2}\log(a-2(X\ip\kappa_1))$ on $f_{\tilde{X}}^{-1}(0)$, a simple computation shows that
 \begin{align*}
 	\tilde{X}\ip \om_1 &=e^{2t}(X\ip \sigma_1)+e^{2t} (a- 2X\ip\kappa_1) dt \\
 	&= -e^{2t}d(X\ip\kappa_1)+e^{2t} d(X\ip\kappa_1)\\
 	&=0.
 \end{align*}
 Together with the first equation of (\ref{qkequ}) one deduces that $\mathcal{L}_{\tilde{X}}\om_1=0$ and hence $\om_1$ passes to $N^{4n}_{red}$. An analogous argument applies to $\Om$.
\end{proof}
The induced QK metric $g_{red}$ on $N^{4n}_{red}$ is determined by
\begin{equation}
	g_\Om= g_\Om(\tilde{X},\tilde{X})\cdot \xi_X^2+g_{red},\label{metriconlevelset}
\end{equation}
where $\xi_X$ is the Riemannian connection $1$-form on $\pi: f_{\tilde{X}}^{-1}(0)\to N^{4n}_{red}$ defined by
\begin{equation}
	\xi_X(\cdot):=\frac{g_{\Om}(\tilde{X},\cdot)}{g_{\Om}(\tilde{X},\tilde{X})}.\label{connectionXix}
\end{equation}
Strictly speaking one should write $\pi^*g_{red}$ in (\ref{metriconlevelset}) but since $\pi$ is a Riemannian submersion we can identify $g_{red}$ with $\pi^*g_{red}$.

The key observation now is that there is another $S^1$ action on $f_{\tilde{X}}^{-1}(0)$ generated by the Killing vector field $\partial_{y_1}$ which also preserves $\xi_X$. Moreover it commutes with $\tilde{X}$ and as such it defines an $S^1$ action on $N^{4n}_{red}$. This will be the crucial ingredient to inverting this quotient. We denote by $Z$ the vector field on $N^{4n}_{red}$ generated by this action.
\begin{Prop}\label{Zhorizontal}
	The Killing vector field $\partial_{y_1}$ can be expressed as
	\begin{equation}
		\partial_{y_1}=Z_h -2(a-2p+g_{M}(X,X))^{-1}\cdot \tilde{X},
	\end{equation}
where $Z_h$ denotes the horizontal lift of $Z$ and $p:=X\ip \kappa_1$ (the negative of the moment map for $X$ associated to $\sigma_1$ on $M^{4n}$).
\end{Prop}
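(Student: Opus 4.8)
The plan is to exploit the orthogonal splitting of $T(f_{\tilde{X}}^{-1}(0))$ induced by the Riemannian submersion $\pi\colon f_{\tilde{X}}^{-1}(0)\to N^{4n}_{red}$, whose vertical distribution is the line spanned by $\tilde{X}$. Since $\partial_{y_1}$ commutes with $\tilde{X}$ and, by the discussion preceding the proposition, is $\pi$-related to $Z$, we have $\pi_*\partial_{y_1}=Z$; hence $\partial_{y_1}-Z_h$ lies in $\ker\pi_*$, i.e.\ $\partial_{y_1}=Z_h+c\,\tilde{X}$ for some function $c$. It then remains only to identify $c$ and to observe that it is fibre-constant.

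To pin down $c$ I would use that $Z_h$ is horizontal, so $g_\Om(Z_h,\tilde{X})=0$; pairing $\partial_{y_1}=Z_h+c\,\tilde{X}$ with $\tilde{X}$ gives
\[
c=\frac{g_\Om(\partial_{y_1},\tilde{X})}{g_\Om(\tilde{X},\tilde{X})}.
\]
This reduces the statement to two inner-product computations on $f_{\tilde{X}}^{-1}(0)$, which I would carry out by evaluating the coframe $dt,\al,\xi,\eta$ and $g_M$ on $\partial_{y_1}$ and on $\tilde{X}|_{f_{\tilde{X}}^{-1}(0)}=X-\tfrac{a}{2}\partial_{y_1}$, recalling that on the level set $y_2=y_3=0$ and $e^{-2t}=a-2p$ with $p=X\ip\kappa_1$.

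The key simplification comes from three facts. Since $\kappa_2=\tfrac12(X\ip\sigma_3)$ and $\kappa_3=-\tfrac12(X\ip\sigma_2)$, antisymmetry of $\sigma_2,\sigma_3$ forces $\xi(X)=\eta(X)=0$, so only the $\al$- and $g_M$-terms contribute; moreover $\al(\tilde{X})=p-\tfrac{a}{2}=-\tfrac12(a-2p)$, $\al(\partial_{y_1})=1$, $dt$ annihilates both vectors, and $g_M(\tilde{X},\tilde{X})=g_M(X,X)$ because $\partial_{y_1}$ is $g_M$-null. Substituting these together with $e^{-2t}=a-2p$ into $g_\Om=dt^2+4e^{4t}(\al^2+\xi^2+\eta^2)+e^{2t}g_M$ yields
\begin{align*}
g_\Om(\tilde{X},\tilde{X}) &= 4e^{4t}\cdot\tfrac14(a-2p)^2+e^{2t}g_M(X,X)=(a-2p)^{-1}\big(a-2p+g_M(X,X)\big),\\
g_\Om(\partial_{y_1},\tilde{X}) &= 4e^{4t}\cdot\big({-\tfrac12}(a-2p)\big)=-2(a-2p)^{-1}.
\end{align*}
Dividing gives $c=-2\big(a-2p+g_M(X,X)\big)^{-1}$, exactly the claimed coefficient.

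I do not anticipate a genuine obstacle: the entire argument is the orthogonal-projection identity for $c$ followed by substitution. The only real care needed is the bookkeeping of which terms survive on the level set, and the antisymmetry cancellations $\xi(X)=\eta(X)=0$ together with the value $e^{-2t}=a-2p$ handle this. Finally, $c$ is a well-defined function on $N^{4n}_{red}$ since both $p$ and $g_M(X,X)$ are pulled back from $M^{4n}$ and are therefore invariant under $\tilde{X}$ and $\partial_{y_1}$, so the displayed expression descends.
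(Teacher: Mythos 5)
Your proposal is correct and follows essentially the same route as the paper: both decompose $\partial_{y_1}=Z_h+c\,\tilde{X}$ using that $\pi_*\partial_{y_1}=\pi_*Z_h=Z$, and identify $c=\xi_X(\partial_{y_1})=g_\Om(\partial_{y_1},\tilde{X})/g_\Om(\tilde{X},\tilde{X})$; the paper merely computes the full connection form $\xi_X$ from the restricted metric before evaluating it on $\partial_{y_1}$, whereas you evaluate the two inner products directly, with the same cancellations ($\xi(X)=\eta(X)=0$, $e^{-2t}=a-2p$) and the same result.
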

\begin{proof}
	Since by definition $\pi_*(\partial_{y_1})=\pi_*(Z_h)=Z$ we only need to check that $\xi_X(\partial_{y_1})=-2(a-2(X\ip \kappa_1)+g_{M}(X,X))^{-1}$. Observe that on $f_{\tilde{X}}^{-1}(0)$ we have
	\begin{equation}	g_{\Om}=\frac{dp^2}{(a-2p)^2}+4(a-2p)^{-2}((dy_1+\kappa_1)^2+\kappa_2^2+\kappa_3^2)+(a-2p)^{-1}g_{M}\label{permutingquotientmetric}
	\end{equation}
	and from this we compute the connection form explicitly as
	\begin{equation}
		\xi_X=\frac{X^\flat-2 dy_1-2\kappa_1}{a-2p+g_{M}(X,X)},\label{conn}
	\end{equation}
	where $X^\flat:=g_M(X,\cdot)$. It is now easy to see that 
	\begin{equation*}
		\xi_X(\partial_{y_1}) =-2(a-2p+g_{M}(X,X))^{-1}
	\end{equation*}
and this concludes the proof.
\end{proof}
Using (\ref{permutingquotientmetric}) and (\ref{conn}) we can express $g_{red}$ explicitly in terms of the data on $M^{4n}$ as 
\begin{align}
g_{red} =& \ \frac{dp^2}{(a-2p)^2}+\frac{4g_{M}(X,X)}{(a-2p)^2(a-2p+g_{M}(X,X))}(dy_1+\kappa_1)^2\label{gredmetric}\\
&+ \frac{4}{(a-2p)^2}(\kappa_2^2+\kappa_3^2)+\frac{1}{(a-2p)}g_M \nonumber\\
&+\frac{1}{(a-2p)(a-2p+g_{M}(X,X))}(2 X^\flat \odot (dy_1+\kappa_1)-(X^\flat)^2).\nonumber
\end{align}
In contrast to $\om_1$, $\mathcal{L}_{\tilde{X}}\om_i\neq 0$ for $i=2,3$ and hence these do not descend to $N^{4n}_{red}.$ However by choosing new local coordinates on $f_{\tilde{X}}^{-1}(0)$ we can write $\tilde{X}=\partial_x$, where $x$ denotes the fibre coordinate and we define
\begin{equation}\label{om2om3def}
	\bar{\om}_2:=h\cdot \om_2-f\cdot \om_3 \text{ \ \ and \ \ } \bar{\om}_3:=f\cdot \om_2+h\cdot \om_3
\end{equation}
where $f:=\cos(2x)$ and $h:=\sin(2x)$. A simple calculation now shows that:
\begin{Prop}
	The $2$-forms $\bar{\om}_2$ and $\bar{\om}_3$ descend to $N^{4n}_{red}$.
\end{Prop}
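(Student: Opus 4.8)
The plan is to show that $\bar\om_2$ and $\bar\om_3$ are \emph{basic} with respect to the Riemannian submersion $\pi\colon f_{\tilde X}^{-1}(0)\to N^{4n}_{red}$, that is, that each is both horizontal ($\tilde X\ip\bar\om_j=0$) and invariant ($\mathcal{L}_{\tilde X}\bar\om_j=0$) along $f_{\tilde X}^{-1}(0)$; a form with these two properties is the pullback of a well-defined form on the quotient. Since $\bar\om_2$ and $\bar\om_3$ are function-coefficient combinations of $\om_2$ and $\om_3$ and $\tilde X=\partial_x$, horizontality of $\bar\om_j$ reduces to horizontality of $\om_2$ and $\om_3$ themselves.

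First I would establish horizontality, i.e.\ $\tilde X\ip\om_2=\tilde X\ip\om_3=0$ on $f_{\tilde X}^{-1}(0)$, mirroring the computation already carried out for $\om_1$ in the previous Proposition. Using $\tilde X|_{f_{\tilde X}^{-1}(0)}=X-\tfrac{a}{2}\partial_{y_1}$ together with the structure equations for $\om_2,\om_3$ and $\al=dy_1+\kappa_1$, $\xi=dy_2-\kappa_2$, $\eta=dy_3-\kappa_3$, I would contract and simplify. The essential algebraic inputs are $X\ip(X\ip\sigma_i)=0$, which give $X\ip\kappa_2=\tfrac{1}{2}X\ip(X\ip\sigma_3)=0$ and $X\ip\kappa_3=-\tfrac{1}{2}X\ip(X\ip\sigma_2)=0$, together with the level-set relations $e^{-2t}=a-2p$ and $y_2=y_3=0$ (whence $dy_2=dy_3=0$ upon restriction). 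Conceptually this is the statement that at the zero level set the normal space is spanned by $I_1\tilde X,I_2\tilde X,I_3\tilde X$, so that $\tilde X\ip\om_i=(I_i\tilde X)^\flat$ annihilates $T f_{\tilde X}^{-1}(0)$; the explicit computation merely confirms the precise cancellation.

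Next I would establish invariance. Since $\tilde X$ is permuting it satisfies $\mathcal{L}_{\tilde X}\om_2=-2\om_3$ and $\mathcal{L}_{\tilde X}\om_3=2\om_2$, while with $\tilde X=\partial_x$ one has $\mathcal{L}_{\tilde X}f=-2h$ and $\mathcal{L}_{\tilde X}h=2f$. Expanding $\mathcal{L}_{\tilde X}\bar\om_2=(\mathcal{L}_{\tilde X}h)\om_2+h\,\mathcal{L}_{\tilde X}\om_2-(\mathcal{L}_{\tilde X}f)\om_3-f\,\mathcal{L}_{\tilde X}\om_3$, and likewise for $\bar\om_3$, the four terms cancel in pairs, giving $\mathcal{L}_{\tilde X}\bar\om_2=\mathcal{L}_{\tilde X}\bar\om_3=0$. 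This is exactly the purpose of the rotation by $(f,h)=(\cos 2x,\sin 2x)$: it counter-rotates the permuting action so that the combination becomes $x$-independent even though $\om_2,\om_3$ individually are not invariant. Combining the two steps, $\bar\om_2$ and $\bar\om_3$ are basic and hence descend to $N^{4n}_{red}$.

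The main obstacle is the horizontality step, since this is where the moment-map condition $f_{\tilde X}=0$ is genuinely used: one must track how the level-set relation $e^{-2t}=a-2p$ forces the term $4e^{4t}(p-\tfrac{a}{2})\eta$ to combine with $-2e^{2t}\kappa_3$ and cancel, and must then isolate the vanishing of the residual $\al$- and $dt$-contributions via $X\ip\kappa_2=X\ip\kappa_3=0$. The invariance step, by contrast, is a short and purely formal cancellation once the permuting relations are in hand.
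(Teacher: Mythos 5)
Your proof is correct and takes essentially the same route as the paper's: the paper likewise notes that $\bar{\om}_2,\bar{\om}_3$ are horizontal on $f_{\tilde{X}}^{-1}(0)$ and then obtains invariance from the identical cancellation $(\mathcal{L}_{\tilde{X}}h)\om_2+h\,\mathcal{L}_{\tilde{X}}\om_2-(\mathcal{L}_{\tilde{X}}f)\om_3-f\,\mathcal{L}_{\tilde{X}}\om_3=0$ using $\mathcal{L}_{\tilde{X}}\om_2=-2\om_3$, $\mathcal{L}_{\tilde{X}}\om_3=+2\om_2$. Your explicit horizontality check (via $\eta=-\kappa_3$, $e^{-2t}=a-2p$ and $X\ip\kappa_2=X\ip\kappa_3=0$) correctly supplies the step the paper only references.
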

In summary, we have shown that $N^{4n}_{red}$ inherits the data $(g_{red},\bar{\Om}:=\Om,\bar{\om}_1:=\om_1,\bar{\om}_2,\bar{\om}_3)$, the curvature $2$-form $d\xi_X$ and the Killing vector field $Z$. It is clear by construction that the triple $\bar{\om}_1, \bar{\om}_2,\bar{\om}_3$ determines the QK structure on $N^{4n}_{red}$. 
Moreover we have:
\begin{Prop}\label{qkequofQuotientprop}
	The $\mathfrak{sp}(1)$-component of the Levi-Civita connection of $(N^{4n}_{red},g_{red})$ is determined by
	\begin{equation}\label{qkequofQuotient}
		\begin{cases}
		\begin{matrix}
			d\bar{\om}_1 &=& 0 &+& -\al_2\w \bar{\om}_2 &+& \al_3\w \bar{\om}_3,\\
			d\bar{\om}_2 &=& \al_2\w \bar{\om}_1 &+& 0  &+& \beta\w \bar{\om}_3,\\
			d\bar{\om}_3 &=& -\al_3\w \bar{\om}_1 &+& -\beta\w \bar{\om}_2 &+& 0,
		\end{matrix}
		\end{cases}
			\end{equation}
where $\al_2:=-4 e^{2t} (f\kappa_2+h\kappa_3)$, $\al_3:=4 e^{2t}(-h\kappa_2+f\kappa_3 )$ and $\beta:=2dx+4e^{2t}\alpha$.
\end{Prop}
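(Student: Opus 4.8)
The plan is to verify the three structure equations (\ref{qkequofQuotient}) by a direct computation performed on the level set $f_{\tilde{X}}^{-1}(0)$, onto which all the relevant forms have been pulled back; here the structure equations (\ref{qkequ}) of the ambient QK manifold do the real work. Let $\iota\colon f_{\tilde{X}}^{-1}(0)\hookrightarrow N^{4n+4}$ denote the inclusion. I would first record the restrictions that will be fed into (\ref{qkequ}): since $y_2=y_3=0$ on $f_{\tilde{X}}^{-1}(0)$ we have $\iota^*\xi=-\kappa_2$ and $\iota^*\eta=-\kappa_3$; since $f=\cos(2x)$ and $h=\sin(2x)$ depend only on the fibre coordinate $x$ with $\tilde{X}=\partial_x$, we have $df=-2h\,dx$ and $dh=2f\,dx$; and since $f^2+h^2=1$, the defining relations (\ref{om2om3def}) invert to $\om_2=h\bar{\om}_2+f\bar{\om}_3$ and $\om_3=-f\bar{\om}_2+h\bar{\om}_3$.

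For the first equation I would use $d\bar{\om}_1=\iota^*d\om_1$ and substitute the top line of (\ref{qkequ}) together with the identifications above; expressing $\om_2,\om_3$ through $\bar{\om}_2,\bar{\om}_3$ and collecting coefficients reproduces $-\al_2\wedge\bar{\om}_2+\al_3\wedge\bar{\om}_3$ with the stated $\al_2=-4e^{2t}(f\kappa_2+h\kappa_3)$ and $\al_3=4e^{2t}(-h\kappa_2+f\kappa_3)$. For the remaining two I would differentiate (\ref{om2om3def}) by the Leibniz rule. The derivatives of the coefficients give $dh\wedge\om_2-df\wedge\om_3=2\,dx\wedge\bar{\om}_3$ and $df\wedge\om_2+dh\wedge\om_3=-2\,dx\wedge\bar{\om}_2$, which furnish precisely the $2\,dx$ summand of $\beta$; the terms $h\,d\om_2-f\,d\om_3$ and $f\,d\om_2+h\,d\om_3$, evaluated using the last two lines of (\ref{qkequ}), contribute the $\al_2\wedge\bar{\om}_1$ (resp.\ $-\al_3\wedge\bar{\om}_1$) pieces together with the $4e^{2t}\al\wedge\bar{\om}_3$ (resp.\ $-4e^{2t}\al\wedge\bar{\om}_2$) terms completing $\beta=2dx+4e^{2t}\al$. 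Collecting everything yields the last two lines of (\ref{qkequofQuotient}).

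Essentially all of this is sign-and-coefficient bookkeeping, so I expect no real obstacle in the computation itself; the one point deserving care is the interpretation. Because the earlier propositions guarantee that $\bar{\om}_1,\bar{\om}_2,\bar{\om}_3$ (and hence $\Om$) descend to $N^{4n}_{red}$, the descended triple satisfies structure equations of the standard $Sp(n)Sp(1)$ form, and the $1$-forms $\al_2,\al_3,\beta$ appearing above are (up to the usual normalizing factor) exactly the $\mathfrak{sp}(1)$-valued connection $1$-form of the Levi-Civita connection of $g_{red}$, read off in the local frame $(\bar{\om}_1,\bar{\om}_2,\bar{\om}_3)$. I would emphasise that $\beta$ involves the vertical form $dx$, which does not descend on its own; this is as it should be, since the $\mathfrak{sp}(1)$-connection $1$-forms are defined only up to the $SO(3)$ gauge rotations of the frame and live on the frame bundle rather than on the base.
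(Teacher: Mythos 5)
Your computation is correct and follows exactly the paper's route: pull everything back to $f_{\tilde{X}}^{-1}(0)$ (where $\xi=-\kappa_2$, $\eta=-\kappa_3$), invert (\ref{om2om3def}), substitute into (\ref{qkequ}) for the first line, and differentiate (\ref{om2om3def}) by Leibniz for the other two — the paper only writes out the first line and invokes the "analogous argument" you carry out explicitly. Your closing remark on the gauge-dependence of $\beta$ is a sensible addition but not part of the paper's proof.
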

\begin{proof}
	First note that from (\ref{om2om3def}) we can write
	\begin{equation}
		{\om}_2=h\cdot \bar{\om}_2+f\cdot \bar{\om}_3 \text{ \ \ and \ \ } {\om}_3=-f\cdot \bar{\om}_2+h\cdot \bar{\om}_3.
	\end{equation}
So from the first equation of (\ref{qkequ}) (again pullbacked to $f_{\tilde{X}}^{-1}(0)$) we have
\begin{align*}
	d\om_1 &= (4e^{2t}\kappa_3)\w (h\cdot \bar{\om}_2+f\cdot \bar{\om}_3 )-(4e^{2t}\kappa_2)\w(-f\cdot \bar{\om}_2+h\cdot \bar{\om}_3)\\
	&= 4e^{2t}(f\kappa_2+h\kappa_3)\w \bar{\om}_2+4e^{2t}(-h\kappa_2+f\kappa_3)\w \bar{\om}_3
\end{align*}
and this proves the first line of (\ref{qkequofQuotient}). The analogous argument applied to $\bar{\om}_2$ and $\bar{\om}_3$ concludes the proof.
\end{proof}
It is worth noting that $\beta(Z)=4(a-2p)^{-1}.$ 
Since $N^{4n}_{red}$ inherits a Killing vector field $Z$, the natural next step is to understand the properties of this action. A simple calculation shows:
\begin{Prop}
The vector field $Z$ preserves the data $(g_{red},\bar{\om}_1,\bar{\om}_2,\bar{\om}_3,d\xi_X)$ on $N^{4n}_{red}$.
\end{Prop}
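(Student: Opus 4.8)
The plan is to verify that the vector field $Z$ preserves each of the five objects in the tuple $(g_{red},\bar{\om}_1,\bar{\om}_2,\bar{\om}_3,d\xi_X)$ by working upstairs on the level set $f_{\tilde{X}}^{-1}(0)$ and then pushing the relevant Lie-derivative identities down through the Riemannian submersion $\pi$. The essential point is Proposition \ref{Zhorizontal}, which expresses $\partial_{y_1}$ on $f_{\tilde{X}}^{-1}(0)$ as $Z_h-2(a-2p+g_M(X,X))^{-1}\tilde{X}$. Since $\tilde{X}$ generates the fibre direction of $\pi$ and the forms $\bar{\om}_i$, $g_{red}$ and $d\xi_X$ are all $\pi$-basic (they descend to $N^{4n}_{red}$ precisely because they are horizontal and $\tilde{X}$-invariant, as established in the preceding propositions), the action of $Z$ on a downstairs object equals the action of any horizontal lift of $Z$ on its pullback, and in particular the action of $\partial_{y_1}$ modulo the $\tilde{X}$-direction, which annihilates basic forms. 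Thus it suffices to show $\mathcal{L}_{\partial_{y_1}}$ kills each object upstairs.

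First I would treat $g_{red}$, $\bar{\om}_1$ and $d\xi_X$. For the metric, the explicit formula (\ref{gredmetric}) shows that all coefficients depend only on $p=X\ip\kappa_1$, on $g_M$, $\kappa_i$ and $X^\flat$ (none of which involve the fibre coordinate $y_1$), and on the combination $dy_1+\kappa_1$; since $\partial_{y_1}$ annihilates $p$ and $X^\flat$ and satisfies $\mathcal{L}_{\partial_{y_1}}(dy_1+\kappa_1)=d(\partial_{y_1}\ip(dy_1+\kappa_1))=d(1)=0$ by Cartan's formula, we get $\mathcal{L}_{\partial_{y_1}}g_{red}=0$ immediately. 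The same Cartan-formula computation, using $\partial_{y_1}\ip\om_1=0$ on $f_{\tilde{X}}^{-1}(0)$ (which follows since $\om_1=e^{2t}\sigma_1+4e^{4t}\xi\w\eta+2e^{2t}dt\w\al$ and $\partial_{y_1}$ only pairs with $\al$, whose contraction is constant) together with $d\om_1$ from (\ref{qkequ}), shows $\mathcal{L}_{\partial_{y_1}}\bar{\om}_1=0$; and $\mathcal{L}_{\partial_{y_1}}d\xi_X=d(\mathcal{L}_{\partial_{y_1}}\xi_X)=0$ follows from the explicit form (\ref{conn}) of $\xi_X$, whose coefficients again depend only on $y_1$ through $dy_1+\kappa_1$.

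The main obstacle will be $\bar{\om}_2$ and $\bar{\om}_3$, because these involve the rotation angle $x$ through $f=\cos(2x)$ and $h=\sin(2x)$, and one must confirm that the new fibre coordinate $x$ (with $\tilde{X}=\partial_x$) is chosen so as to be independent of $y_1$, i.e.\ that $\partial_{y_1}x=0$. Here I would argue that because $\partial_{y_1}$ and $\tilde{X}$ commute and $\partial_{y_1}$ is itself Killing and preserves $\xi_X$ (as noted in the text just before the statement of the proposition), $\partial_{y_1}$ descends to $Z$ and acts trivially on the transverse rotation coordinate; concretely $\mathcal{L}_{\partial_{y_1}}\om_2=\mathcal{L}_{\partial_{y_1}}\om_3=0$ upstairs since the defining expressions for $\om_2,\om_3$ in (\ref{qkequ}) involve $y_1$ only via $\al$, so the same Cartan-formula argument as for $\om_1$ applies. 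Combining $\mathcal{L}_{\partial_{y_1}}f=\mathcal{L}_{\partial_{y_1}}h=0$ with $\mathcal{L}_{\partial_{y_1}}\om_i=0$ and the definitions (\ref{om2om3def}) then yields $\mathcal{L}_{\partial_{y_1}}\bar{\om}_2=\mathcal{L}_{\partial_{y_1}}\bar{\om}_3=0$ by the Leibniz rule. Finally, translating each upstairs identity $\mathcal{L}_{\partial_{y_1}}(\cdot)=0$ into the downstairs statement $\mathcal{L}_Z(\cdot)=0$ uses that the objects are basic and that $\pi_*\partial_{y_1}=Z$, completing the proof.
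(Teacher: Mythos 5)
Your overall strategy is sound and is essentially the only reasonable one: the paper itself gives no argument here (it just says ``a simple calculation shows''), and your plan of showing that $\mathcal{L}_{\partial_{y_1}}$ annihilates the pullback of each object on $f_{\tilde{X}}^{-1}(0)$ and then descending through the Riemannian submersion is correct, as is your identification of the genuinely delicate point, namely that the rotation coordinate $x$ must be chosen with $\partial_{y_1}x=0$ before one can conclude anything about $\bar{\om}_2,\bar{\om}_3$. (To make that step fully precise: since $\tilde{X}=X-\tfrac{a}{2}\partial_{y_1}$ on the level set and $[\partial_{y_1},X]=0$, one may take $x$ to be a flow parameter of $X$ pulled back from $M^{4n}$, so that $\tilde{X}(x)=X(x)=1$ and $\partial_{y_1}x=0$ automatically.)

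There is, however, one incorrect intermediate claim in your treatment of $\bar{\om}_1$: it is not true that $\partial_{y_1}\ip\om_1=0$ on $f_{\tilde{X}}^{-1}(0)$. Since $\al(\partial_{y_1})=1$ and $dt(\partial_{y_1})=0$ there (as $t=-\tfrac12\log(a-2p)$ depends only on $p$), one gets $\partial_{y_1}\ip\om_1=-2e^{2t}\,dt\neq 0$; contrast this with the paper's computation that $\tilde{X}\ip\om_1=0$, which uses the specific value $\al(\tilde{X})=p-\tfrac a2$. Your conclusion survives nonetheless, because $d(\partial_{y_1}\ip\om_1)=-2\,d(e^{2t}dt)=0$ and a short check using (\ref{qkequ}) gives $\partial_{y_1}\ip d\om_1=0$, so Cartan's formula still yields $\mathcal{L}_{\partial_{y_1}}\om_1=0$. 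In fact the detour through Cartan's formula is unnecessary: exactly the argument you use for $\om_2$ and $\om_3$ --- that each $\om_i$ depends on $y_1$ only through $\al=dy_1+\kappa_1$ and $\mathcal{L}_{\partial_{y_1}}\al=0$ --- applies verbatim to $\om_1$ and is the cleanest route. With that one repair the proof is complete and correct.
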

Using (\ref{gredmetric}) and from the definition of $\beta$, we derive the following explicit expression for the connection form $\xi_X$ in terms of the data on $N^{4n}_{red}$.
\begin{Prop}\label{propconnectiononlevelset}
	On $f^{-1}_{\tilde{X}}(0)$ we have
	\begin{equation}
		\xi_X=dx+\frac{1}{2}(a-2p)Z^\flat-\frac{1}{2}\beta,\label{connectiononlevelset}
	\end{equation}
where $Z^{\flat}:=g_{red}(Z,\cdot)$. In particular, the curvature form $d\xi_X$ is completely determined by $Z$ and the QK structure on $N^{4n}_{red}.$
\end{Prop}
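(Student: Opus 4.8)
The plan is to verify (\ref{connectiononlevelset}) by a direct computation of both sides on $f^{-1}_{\tilde{X}}(0)$ in the coframe underlying (\ref{permutingquotientmetric}), and then to read off the concluding assertion by differentiating. First I would record the consequences of restricting to the level set: since $t=-\tfrac12\log(a-2p)$ there, one has $e^{2t}=(a-2p)^{-1}$, while $\al=dy_1+\kappa_1$, $\xi=-\kappa_2$, $\eta=-\kappa_3$ and $dt=(a-2p)^{-1}dp$, which is exactly the data reproducing the metric (\ref{permutingquotientmetric}). In particular $\beta=2dx+4(a-2p)^{-1}(dy_1+\kappa_1)$, so substituting this into the right-hand side of (\ref{connectiononlevelset}) cancels the $dx$ terms and reduces the claim to the single identity
\[
Z^\flat=\frac{2}{a-2p}\,\xi_X+\frac{4}{(a-2p)^2}(dy_1+\kappa_1).
\]

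To prove this I would compute $Z^\flat$ from Proposition \ref{Zhorizontal}. Since $\pi$ is a Riemannian submersion and $Z_h$ is horizontal, $Z^\flat$ pulls back to $g_\Om(Z_h,\cdot)$ on $f^{-1}_{\tilde{X}}(0)$; writing $Z_h=\partial_{y_1}+2(a-2p+g_M(X,X))^{-1}\tilde{X}$ splits this into two pieces. The first is $g_\Om(\partial_{y_1},\cdot)$, which by inspection of (\ref{permutingquotientmetric}) equals $4(a-2p)^{-2}(dy_1+\kappa_1)$, because $\partial_{y_1}$ pairs nontrivially only with the $(dy_1+\kappa_1)^2$ summand. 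The second is a multiple of $g_\Om(\tilde{X},\cdot)=g_\Om(\tilde{X},\tilde{X})\,\xi_X$, by the very definition (\ref{connectionXix}) of $\xi_X$.

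The only genuine calculation is the norm $g_\Om(\tilde{X},\tilde{X})$ with $\tilde{X}=X-\tfrac a2\partial_{y_1}$, and this is where the permuting hypothesis enters decisively: the relations (\ref{permuting}), (\ref{perm1}), (\ref{perm2}) give $dp(X)=-\sigma_1(X,X)=0$ and $\kappa_2(X)=\kappa_3(X)=0$, so that only the $(dy_1+\kappa_1)^2$ and $g_M$ terms of (\ref{permutingquotientmetric}) survive and one finds $g_\Om(\tilde{X},\tilde{X})=(a-2p+g_M(X,X))/(a-2p)$. Feeding this back cancels the awkward factor $a-2p+g_M(X,X)$ against the coefficient in Proposition \ref{Zhorizontal} and produces exactly the displayed identity; adding the two pieces then establishes (\ref{connectiononlevelset}). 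I expect this norm bookkeeping to be the main obstacle, since it is the one spot where the permuting structure is used and where a sign or contraction error would propagate; everything else is linear algebra in the orthogonal coframe of (\ref{permutingquotientmetric}).

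Finally, for the concluding statement I would apply $d$ to (\ref{connectiononlevelset}): as $d(dx)=0$ one gets $d\xi_X=\tfrac12 d\big((a-2p)Z^\flat\big)-\tfrac12 d\beta$. Here $a-2p=4\,\beta(Z)^{-1}$ by the relation $\beta(Z)=4(a-2p)^{-1}$ recorded above, $Z^\flat=g_{red}(Z,\cdot)$, and $\beta$ is the $\mathfrak{sp}(1)$-connection $1$-form of the descended frame $\bar{\om}_1,\bar{\om}_2,\bar{\om}_3$ fixed by the structure equations (\ref{qkequofQuotient}); thus every term on the right is intrinsic to $(N^{4n}_{red},g_{red},\bar{\om}_i)$ together with $Z$, and hence $d\xi_X$ is determined by $Z$ and the QK structure on $N^{4n}_{red}$.
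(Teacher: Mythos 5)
Your proposal is correct and follows essentially the same route as the paper, which only sketches this step by pointing to the metric (\ref{permutingquotientmetric})--(\ref{gredmetric}) and the definition of $\beta$; your computation of $g_\Om(\tilde X,\tilde X)=(a-2p+g_M(X,X))/(a-2p)$ and the decomposition of $Z^\flat$ via Proposition \ref{Zhorizontal} supply exactly the bookkeeping the author leaves implicit, and they are consistent with (\ref{conn}).
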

Now observe that $X^{\flat}:=g_{M}(X,\cdot)=g_{M}(I_iX,I_i\cdot)=I_i(X\ip\sigma_i)$ on $M^{4n}$ . Thus, we have
\begin{equation}
	X^{\flat}=-I_1(dp)=-2I_2(\kappa_3)=+2I_3(\kappa_2)
\end{equation}
and in particular, $dX^{\flat}\in \Om^{1,1}_{I_1}(M^{4n})$.
The analogous calculation on $N^{4n}_{red}$ shows that
\begin{equation}
	4Z^{\flat}=-\bar{I}_1(d(\beta(Z)))=\beta(Z)\cdot\bar{I}_2(\al_2)=-\beta(Z)\cdot\bar{I}_3(\al_3),\label{zsharp}
\end{equation}
where $\bar{I}_1,\bar{I}_2,\bar{I}_3$ denote the almost complex structures associated to $\bar{\om}_1,\bar{\om}_2,\bar{\om}_3$. A similar computation as in the proof of Theorem \ref{hypercomplextheorem} then shows that
\begin{Prop}
	$(N^{4n}_{red},\bar{I}_1)$ is a complex manifold.
\end{Prop}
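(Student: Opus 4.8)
The plan is to mimic the proof of Theorem~\ref{hypercomplextheorem}: I would exhibit a local, nowhere-vanishing $(2n,0)$-form for $\bar I_1$ and check that its exterior derivative has no $(2n-1,2)$-component, which by the criterion recalled in Section~\ref{preliminaries} is equivalent to $d(\Gamma(\Lm^{2n,0}))\subset\Gamma(\Lm^{2n,1})$ and hence to the integrability of $\bar I_1$. Since $\bar\om_2+i\bar\om_3$ is of type $(2,0)$ with respect to $\bar I_1$ and of maximal rank, the natural candidate is
\[
\bar\Upsilon_1:=(\bar\om_2+i\bar\om_3)^n,
\]
which trivialises $\Lm^{2n,0}_{\bar I_1}$ locally. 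As any section of $\Lm^{2n,0}$ has the form $\phi\,\bar\Upsilon_1$ with $d\phi\w\bar\Upsilon_1\in\Gamma(\Lm^{2n,1})$ automatically, it is enough to verify the condition on $\bar\Upsilon_1$ itself.

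First I would differentiate $\bar\Upsilon_1$ using the structure equations (\ref{qkequofQuotient}). Adding $i$ times the third line to the second gives
\[
d(\bar\om_2+i\bar\om_3)=(\al_2-i\al_3)\w\bar\om_1-i\,\be\w(\bar\om_2+i\bar\om_3),
\]
so that
\[
d\bar\Upsilon_1=n(\al_2-i\al_3)\w\bar\om_1\w(\bar\om_2+i\bar\om_3)^{n-1}-in\,\be\w(\bar\om_2+i\bar\om_3)^n.
\]
I would then sort the right-hand side by type. Because $(\bar\om_2+i\bar\om_3)^n$ is of type $(2n,0)$, the last term lies in $\Lm^{2n,1}$ for any $\be$ and is harmless. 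In the first term $\bar\om_1\w(\bar\om_2+i\bar\om_3)^{n-1}$ is of type $(2n-1,1)$, so a $(2n-1,2)$-component can only be produced by the $(0,1)$-part of $\al_2-i\al_3$. Everything therefore reduces to showing that $\al_2-i\al_3$ is of type $(1,0)$ for $\bar I_1$.

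This last type computation is the crux, and it is where the geometry of the permuting quotient enters. I would deduce it from the relations (\ref{zsharp}). Writing $W:=(a-2p)Z^\flat$ and using $\be(Z)=4(a-2p)^{-1}$, these read $\bar I_2(\al_2)=W=-\bar I_3(\al_3)$, whence $\al_2=-\bar I_2(W)$, $\al_3=\bar I_3(W)$ and $\al_2-i\al_3=-(\bar I_2+i\bar I_3)(W)$. Applying $\bar I_1$ and using the quaternion relations $\bar I_1\bar I_2=\bar I_3$, $\bar I_1\bar I_3=-\bar I_2$ on $1$-forms yields $\bar I_1(\al_2-i\al_3)=-i(\al_2-i\al_3)$, i.e. $\al_2-i\al_3\in\Lm^{1,0}_{\bar I_1}$. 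As a cross-check, substituting the expressions of Proposition~\ref{qkequofQuotientprop} gives $\al_2-i\al_3=-4e^{2t}e^{-2ix}(\kappa_2+i\kappa_3)$, and since $\kappa_2+i\kappa_3=-\tfrac{i}{2}\,X\ip(\sigma_2+i\sigma_3)$ with $\sigma_2+i\sigma_3\in\Lm^{2,0}_{I_1}(M^{4n})$, the contraction $X\ip(\sigma_2+i\sigma_3)$ --- and hence $\al_2-i\al_3$ --- is of type $(1,0)$. Either way the $(2n-1,2)$-component of $d\bar\Upsilon_1$ vanishes, so $\bar\Upsilon_1$ is a holomorphic volume form and $\bar I_1$ is integrable. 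I expect the only genuinely delicate point to be this pure-type claim for $\al_2-i\al_3$; the rest is the same bookkeeping as in Theorem~\ref{hypercomplextheorem}.
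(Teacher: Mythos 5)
Your proof is correct and follows exactly the route the paper intends: the paper gives no details beyond ``a similar computation as in the proof of Theorem \ref{hypercomplextheorem}'' immediately after establishing (\ref{zsharp}), and you have filled in precisely that computation, with the key point being that $\al_2-i\al_3$ is of type $(1,0)$ for $\bar I_1$. The only quibble is the eigenvalue sign in ``$\bar I_1(\al_2-i\al_3)=-i(\al_2-i\al_3)$, i.e.\ $\al_2-i\al_3\in\Lm^{1,0}_{\bar I_1}$'' (the paper's convention in Section \ref{preliminaries} takes $\Lm^{1,0}$ to be the $+i$ eigenspace), but your cross-check via $\kappa_2+i\kappa_3=-\tfrac{i}{2}X\ip(\sigma_2+i\sigma_3)$ confirms the substantive type claim, so the argument stands.
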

Thus, we deduce that $dZ^{\flat}, d\xi_X \in \Om^{1,1}_{\bar{I}_1}(N^{4n}_{red})$. The results of \cite{GAMBIOLI2015146} assert that up to a constant factor involving the scalar curvature $d(\|(dZ^\flat)^{\mathfrak{sp}(1)}\|)$ is equal to $\bar{I}_1(Z^\flat)$, and by verifying in an example (for instance see subsection \ref{examplepermutingsection} below) we find that
\begin{equation}
	(dZ^\flat)^{\mathfrak{sp}(1)}={4}(a-2p)^{-1}\bar{\om}_1.
\end{equation}
Moreover the QK moment map associated to the action generated by $Z$ on $N^{4n}_{red}$ is given by
\begin{equation}
Z\ip \bar{\Om}=-d({(a-2p)^{-1}}\bar{\om}_1).\label{QKmomentmapforZ}
\end{equation}
\begin{Rem}
	Since $a-2p>0$, from equation (\ref{QKmomentmapforZ}) the QK quotient of $N^{4n}_{red}$ with respect to $Z$ is the empty set.
\end{Rem}
So far we have shown how to define $g_{red}$ starting from $g_{M}$ and suitable data on $M^4$ (see (\ref{gredmetric})). We now give a somewhat converse result.
\begin{Prop}\label{metriconlevelsetprop}
	The HK metric on $M^{4n}$ can be expressed as
\begin{align}
	g_{M}=&\  4\beta(Z)^{-1}g_{red}-16\beta(Z)^{-3}(Z^\flat)^2+8\beta(Z)^{-2}(\xi_X \odot Z^\flat)\label{hkmetric}\\
	& -\beta(Z)^{-1}(\al_2^2+\al_3^2)-\frac{16\|Z\|^2}{4\beta(Z)\|Z\|^2-\beta(Z)^3}(\xi_X)^2-\frac{(Z\ip d\beta)^2}{\beta(Z)^3},\nonumber
\end{align}
where $\|Z\|^2:=g_{red}(Z,Z).$ 
\end{Prop}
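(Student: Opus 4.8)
The plan is to invert the explicit expression (\ref{permutingquotientmetric}) for $g_\Om$ on $f_{\tilde X}^{-1}(0)$ and then rewrite the result purely in terms of the quotient data. Every identity below should be read as an identity of symmetric $2$-tensors on $f_{\tilde X}^{-1}(0)\cong M^{4n+1}_\al$, where $g_M$ (pulled back from the $\al$-fibration) and $g_{red}$ (pulled back from $\pi$) are both viewed as degenerate tensors on this $(4n+1)$-manifold. Solving (\ref{permutingquotientmetric}) algebraically for $g_M$ gives
\[
g_M=(a-2p)\,g_\Om-\frac{dp^2}{a-2p}-\frac{4}{a-2p}\big(\al^2+\kappa_2^2+\kappa_3^2\big),\qquad \al:=dy_1+\kappa_1,
\]
and the first move is to substitute the submersion splitting (\ref{metriconlevelset}), $g_\Om=g_\Om(\tilde X,\tilde X)\,\xi_X^2+g_{red}$, so that the $g_{red}$ term immediately yields the leading $4\beta(Z)^{-1}g_{red}$ once one uses $\beta(Z)=4(a-2p)^{-1}$.

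Next I would convert each remaining piece into quotient data. From $\al_2=-4e^{2t}(f\kappa_2+h\kappa_3)$, $\al_3=4e^{2t}(-h\kappa_2+f\kappa_3)$ together with $f^2+h^2=1$ and $e^{2t}=(a-2p)^{-1}$ one obtains $\al_2^2+\al_3^2=\beta(Z)^2(\kappa_2^2+\kappa_3^2)$, so the $\kappa_2^2+\kappa_3^2$ term becomes $-\beta(Z)^{-1}(\al_2^2+\al_3^2)$. Inverting (\ref{connectiononlevelset}) (using $\beta=2dx+4e^{2t}\al$) gives $\al=\tfrac{4}{\beta(Z)^2}Z^\flat-\tfrac{2}{\beta(Z)}\xi_X$, so $-\tfrac{4}{a-2p}\al^2$ expands into the $(Z^\flat)^2$ term, the $\xi_X\odot Z^\flat$ cross term, and an extra $\xi_X^2$ contribution. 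Finally, differentiating $\beta(Z)=4(a-2p)^{-1}$ yields $dp=2\beta(Z)^{-2}\,d(\beta(Z))$, and since $Z$ preserves the QK structure it preserves the $\mathfrak{sp}(1)$-connection (applying $\mathcal L_Z$ to (\ref{qkequofQuotient}) with $\mathcal L_Z\bar\om_i=0$ forces $\mathcal L_Z\beta=0$), whence $d(\beta(Z))=-Z\ip d\beta$ and $-\tfrac{1}{a-2p}dp^2=-\beta(Z)^{-3}(Z\ip d\beta)^2$.

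The main obstacle is the $\xi_X^2$ coefficient, which forces one to compute $g_\Om(\tilde X,\tilde X)$ and eliminate $g_M(X,X)$ in favour of $\|Z\|^2=g_{red}(Z,Z)$. Evaluating (\ref{permutingquotientmetric}) on $\tilde X=X-\tfrac a2\partial_{y_1}$, and using $dp=-X\ip\sigma_1$ (from $\mathcal L_X\kappa_1=0$) together with $\sigma_i(X,X)=0$ so that $dp(\tilde X)=\kappa_2(\tilde X)=\kappa_3(\tilde X)=0$ and $\al(\tilde X)=-\tfrac12(a-2p)$, I find $g_\Om(\tilde X,\tilde X)=1+(a-2p)^{-1}g_M(X,X)$. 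On the other hand, Proposition \ref{Zhorizontal} writes $\partial_{y_1}$ through $Z_h$ and $\tilde X$, and assembling $\|Z\|^2=g_\Om(Z_h,Z_h)$ from $g_\Om(\partial_{y_1},\partial_{y_1})$, $g_\Om(\partial_{y_1},\tilde X)$ and $g_\Om(\tilde X,\tilde X)$ gives $\|Z\|^2=4\,g_M(X,X)\,[(a-2p)^2(a-2p+g_M(X,X))]^{-1}$. Eliminating $g_M(X,X)$ between these two relations produces the closed form
\[
g_\Om(\tilde X,\tilde X)=\frac{\beta(Z)^2}{\beta(Z)^2-4\|Z\|^2}.
\]
Combining the resulting $\tfrac{4}{\beta(Z)}g_\Om(\tilde X,\tilde X)\,\xi_X^2$ with the leftover $-\tfrac{4}{\beta(Z)}\xi_X^2$ from the $\al^2$ term gives exactly $-\tfrac{16\|Z\|^2}{4\beta(Z)\|Z\|^2-\beta(Z)^3}\xi_X^2$, and collecting all six contributions yields (\ref{hkmetric}). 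Beyond bookkeeping, the only delicate points are the vanishing of the moment-map contractions and this elimination step for $g_\Om(\tilde X,\tilde X)$.
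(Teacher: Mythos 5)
Your proposal is correct and follows essentially the same route as the paper: it inverts the explicit formula (\ref{permutingquotientmetric}), uses Propositions \ref{Zhorizontal} and \ref{propconnectiononlevelset} to express $\al$, $\kappa_2^2+\kappa_3^2$ and $dp$ in quotient data, and hinges on the same key elimination $g_M(X,X)=(a-2p)^3\|Z\|^2(4-(a-2p)^2\|Z\|^2)^{-1}$ (the paper's (\ref{positiveHK})), which you correctly rederive from $\|Z\|^2=g_\Om(Z_h,Z_h)$. The paper's proof is terser (it rewrites (\ref{gredmetric}) rather than $g_\Om$ directly), but the computation is the same.
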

\begin{proof}
	From Proposition \ref{Zhorizontal} we have
	\[Z_h=(1-a(a-2p+g_M(X,X))^{-1})\partial_{y_1}+2(a-2p+g_M(X,X))^{-1}X\]
	and from Proposition \ref{propconnectiononlevelset} we have $$\xi_X=-2(a-2p)^{-1}\al+\frac{1}{2}(a-2p)Z^\flat.$$ Since $\xi_X(Z_h)=0$, one computes  \begin{equation}
		g_M(X,X)=(a-2p)^3\|Z\|^2(4-(a-2p)^2\|Z\|^2)^{-1}.\label{positiveHK}
	\end{equation} 
	Using the latter, (\ref{permutingquotientmetric}) and 
	\begin{equation}
		\al_2^2+\al_3^2=16(a-2p)^{-2}(\kappa_2^2+\kappa_3^2)
	\end{equation}
	one can rewrite (\ref{gredmetric}) as (\ref{hkmetric}).
\end{proof}
Propositions \ref{qkequofQuotientprop}, \ref{propconnectiononlevelset} and \ref{metriconlevelsetprop} are all suggestive that one should be able recover the HK structure of $M^{4n}$ from the QK structure of $N^{4n}_{red}$ and $Z$. Before proving that this is indeed the case, we first describe a couple of concrete examples illustrating the above reduction.
\subsection{Examples}\label{examplepermutingsection}
\subsubsection{Example 1.}
As in subsection \ref{explicithomotheticexample} we again take $M=\R^4$ with its standard HK structure but we now consider the permuting vector field
\[X=-2x_4 \partial_{x_3}+2x_3 \partial_{x_4}.\]
This corresponds to rotation on the $\R^2$ factor spanned by $x_3,x_4$.
We can choose the connection $1$-form 
\begin{gather*}
	\al=dy_1-\frac{1}{2}(x_2dx_1-x_1dx_2+x_4dx_3-x_3dx_4)
\end{gather*}
as before, since it is also preserved by $X$, and by definition we have 
\begin{equation*}
	\xi=dy_2+x_3dx_1-x_4dx_2 \text{\ \ \ \ and\ \ \ \ }
	\eta=dy_3+x_4dx_1+x_3dx_2.
\end{equation*}
Applying the above construction, from (\ref{connectionXix}) one finds that
\[\xi_X=-(2x_3^2+2x_4^2+a)^{-1}(2dy_1-x_2dx_1+x_1dx_2+x_4dx_3-x_3dx_4).\]
Writing $p:=X \ip \kappa_1 = x_3^2+x_4^2$ and $q:=4y_1-a \arctan(\frac{x_3}{x_4})$, we find after a long computation that
\begin{align}
g_{red} =& \ \frac{(2p+a)}{(2p-a)^2}(dx_1^2+dx_2^2)+\frac{(2p+a)}{4p(2p-a)^2}(dp^2)\label{redQKmetricR4}\\
& +\frac{p}{(2p-a)^2(2p+a)}(dq-2(x_2dx_1-x_1dx_2))^2.\nonumber
\end{align}
From this expression we see that $N^{4}_{red}$ can be viewed an $S^1_q$ bundle over $\R^3$ with coordinates $(x_1,x_2,p)$. The vector field $\partial_q$ essentially corresponds to the Killing vector field $Z$.
One can now verify directly that $g_{red}$ is indeed a self-dual Einstein metric (with scalar curvature $-48$). 
Note that although the QK reduction is only valid for $a-2p>0$ (since we require $t=-\frac{1}{2}\log(a-2p)$) the metric $g_{red}$ is well-defined even if $a=0$.
When $a=0$, a change of coordinates shows that 
\[g_{red}=4e^{s}(dx_1^2+dx_2^2)+ds^2+e^{2s}(dq-2(x_2dx_1-x_1dx_2))^2\]
which one recognises is the complex hyperbolic metric. In particular, the latter is a K\"ahler metric but when $a \neq 0$ the metrics $g_{red}$ are not K\"ahler.
\begin{Rem}
The author would like to thank the referees for pointing out to us that the metric (\ref{redQKmetricR4}) was also recently shown to arise as the one-loop deformed complex hyperbolic plane in \cite[Sect. 4.3]{Cortes2022}, which is also an application of the construction in \cite{Alekseevsky2015,Alekseevsky2013}.
\end{Rem}
\subsubsection{Example 2.}
If instead we take the permuting Killing vector field to be
\[\check{X}=-x_2\partial_{y_1}+x_1\partial_{x_2}-x_4\partial_{x_3}+x_3\partial_{x_4},\]
which corresponds to rotation on both $\R^2_{x_1,x_2}$ and $\R^2_{x_3,x_4}$, then one finds that 
\[\xi_{\check{X}}=-{2}{a^{-1}}dy_1.\]
After another long calculation we have be able to show that $g_{red}$ corresponds to the QK hyperbolic metric on $\R^4$. More concretely, the metric takes the form
\[g_{red}=\frac{a}{(a-r^2)^2}\cdot g_{\R^4},\]
where $\R^4$ is viewed as an $S^1$ bundle over $\R^3$ as in the Gibbons-Hawking ansatz with the $S^1$ action generated by $2a^{-1}Z$ i.e. $\theta(2a^{-1}Z)=1$, \[\mathbf{u}=\Big(\frac{1}{2}(x_1^2+x_2^2-x_3^2-x_4^2),\ x_1x_3+x_2x_4,\ x_1x_4-x_2x_3\Big)\] 
and the harmonic function $V=\frac{1}{2|\mathbf{u}|}=\frac{1}{r^2}$. 
The metric becomes singular for $a=0$ but nonetheless it converges (in the pointed Cheeger-Gromov sense) to the flat metric on $\R^4$ as $a \to 0$.

More generally one can apply the above construction to the case when $M=\R^{4n}$ by extending the permuting Killing vector fields $X$ and $\check{X}$ to $\R^{4n}$ in the obvious way though identifying the quotient metrics remains a challenging task.

On $\R^{4n}$ defining a permuting Killing vector field amounts to choosing a $U(1) $ subgroup of $Sp(n)U(1)\cong U(2n) \cap Sp(n)Sp(1)$, that is not properly contained in the $Sp(n)$ factor. Observe that this was the case in the above examples. So in general, there is an $Sp(n)$ family of such $U(1)$ actions.

In particular, by choosing $1$-parameter families of such $U(1)$ actions one should be able to construct $2$-parameter families ($1$-parameter coming from the constant $a$) of QK metrics. Concretely in the above case this means that one can consider the permuting Killing vector field
\[s\cdot X+(1-s)\cdot \check{X}\]
for any $s\in \R$. This again satisfies the hypothesis of subsection \ref{permutingsubsection} and hence we can apply the QK reduction as described above. 
We expect this will give a family of QK metrics  
connecting the complex hyperbolic metric to the quaternion hyperbolic one, though finding an explicit expression for $g_{red}(s)$ seems rather hard. It is worth emphasising that the definitions of $\kappa_i$ will also depend on $s$.

\subsection{The inverse construction}
Suppose now that $(\bar{N}^{4n},\bar{\Om},\bar{g})$ is an \textit{arbitrary} QK manifold. Then by choosing locally $\bar{\om}_i$ we have that (\ref{qkequofQuotient}) holds for some $1$-forms $\beta, \al_2,\al_3$. Moreover, we have that
	\begin{equation}
	\begin{cases}
		\begin{matrix}
			d\beta &=& -4s \bar{\om}_1 &+& \al_2\w \al_3,\\
			d{\al}_2 &=& +4s \bar{\om}_3 &+&  \al_3\w \beta,\\
			d{\al}_3 &=& +4s \bar{\om}_2 &+& \beta\w \al_2,
		\end{matrix}\label{connec}
	\end{cases}
\end{equation}
where by rescaling $(\bar{\Om},\bar{g})$ we can set $s=+1$ if the scalar curvature is positive and $s=-1$ if it is negative cf. \cite{GAMBIOLI2015146}. The above is essentially a consequence of the fact that the induced Levi-Civita connection on $\langle \bar{\om}_1,\bar{\om}_2,\bar{\om}_3\rangle$ has constant curvature. Note that differentiating (\ref{connec}) one indeed recovers (\ref{qkequofQuotient}). We will henceforth assume that $s=-1$ i.e. $scal(\bar{g})<0$.

Let $Z$ be a Killing vector field on $\bar{N}^{4n}$ such that $\mathcal{L}_Z\bar{\Om}=0$ and such that the projection $(dZ^\flat)^{\mathfrak{sp}(1)}$ does not vanish everywhere. We can then define
\begin{equation}
	\bar{\om}_1:=(2n)^{1/2}\cdot\frac{(dZ^\flat)^{\mathfrak{sp}(1)}}{\|(dZ^\flat)^{\mathfrak{sp}(1)}\|}
\end{equation}
on a suitable open set. It follows that $\mathcal{L}_Z \bar{\om}_1=0$.
We shall further assume that we can choose $\bar{\om}_2,\bar{\om}_3$ such that $\mathcal{L}_Z \bar{\om}_2=\mathcal{L}_Z\bar{\om}_3=0$ and that $\beta(Z)>2\|Z\|$.

Applying $\mathcal{L}_Z$ to (\ref{qkequofQuotient}) and using the quaternions relations we get \begin{equation}
	\mathcal{L}_Z(\beta)=\mathcal{L}_Z(\al_2)=\mathcal{L}_Z(\al_3)=0.
\end{equation}
From this one deduces that (\ref{zsharp}) holds and that $\bar{I}_1$ is integrable (this was also shown in \cite[Sec. 3]{GAMBIOLI2015146}). If $[d(2\beta(Z)^{-1}Z^\flat-\frac{1}{2}\beta)]\in H^2(\bar{N},\mathbb{Z})$, we define the connection $1$-form
\[\xi_X:=dx+2(\beta(Z)^{-1})Z^\flat-\frac{1}{2}\beta\]
on the total space of this $S^1$ bundle, where $x$ denotes the coordinate on the fibre. This is consistent with (\ref{connectiononlevelset}) . 

We denote by $\tilde{Z}$ the lift of $Z$ to the total space satisfying $dx(\tilde{Z})=0$ and we define $M^{4n}$ as the quotient obtained by the action generated by $\tilde{Z}$. Observe that $\mathcal{L}_{\tilde{Z}}\xi_X=0$ by construction. The reader should think of $\tilde{Z}$ as corresponding to the vector field $\partial_{y_1}$ of subsection \ref{thepermutingsubsection}.

\begin{Th}\label{HKQKcorrespondenceTheorem}
The closed $2$-forms
	\begin{gather}
		\begin{cases}
		\sigma_1 := -d(\beta(Z)^{-1}(2\cdot dx-\beta)),\\
		\sigma_2 :=-d(\beta(Z)^{-1}(h\cdot \al_3+f\cdot \al_2)),\\
		\sigma_3 :=-d(\beta(Z)^{-1}(h\cdot\al_2-f\cdot \al_3)),
		\end{cases}
	\end{gather}
where $f:=\cos(2x)$ and $h:=\sin(2x)$, descend to $M^{4n}$ to define a HK triple. Moreover, the vector field $\partial_x$ induces an $S^1$ action on $M^{4n}$ preserving $\sigma_1$ and permuting $\sigma_2$ and $\sigma_3$.
\end{Th}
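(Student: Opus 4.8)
The plan is to verify, in order, that the three forms (i) are $\tilde{Z}$-invariant and horizontal, hence descend to $M^{4n}$; (ii) are closed; (iii) together with the metric \eqref{hkmetric} constitute an almost hyper-Hermitian $Sp(n)$-structure; and finally (iv) that $\partial_x$ preserves $\sigma_1$ and rotates $\sigma_2,\sigma_3$. Writing $\sigma_i=-dP_i$ for the bracketed primitives $P_i$, invariance $\mathcal{L}_{\tilde{Z}}\sigma_i=0$ follows since $d$ commutes with $\mathcal{L}_{\tilde{Z}}$ and every ingredient of $P_i$ is $\tilde{Z}$-invariant: the function $\beta(Z)$ and the pulled-back forms $\beta,\al_2,\al_3$ are $Z$-invariant by the relations $\mathcal{L}_Z\beta=\mathcal{L}_Z\al_2=\mathcal{L}_Z\al_3=0$ established before the statement, while $dx,f,h$ are invariant because $\tilde{Z}(x)=0$. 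For horizontality I would use $\iota_{\tilde{Z}}\sigma_i=-\mathcal{L}_{\tilde{Z}}P_i+d(\iota_{\tilde{Z}}P_i)=d(\iota_{\tilde{Z}}P_i)$ and compute the contractions: $\iota_{\tilde{Z}}P_1=\beta(Z)^{-1}(2\,dx(\tilde{Z})-\beta(Z))=-1$, whereas $\iota_{\tilde{Z}}P_2=\iota_{\tilde{Z}}P_3=0$ since $\al_2(Z)=\al_3(Z)=0$ — the latter being immediate from \eqref{zsharp}, as $\al_2,\al_3$ are proportional to $\bar{I}_2Z^\flat,\bar{I}_3Z^\flat$ and $g_{red}(Z,\bar{I}_jZ)=0$. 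Thus each $\iota_{\tilde{Z}}\sigma_i$ is the differential of a constant, so $\sigma_i$ is basic and descends, and closedness is immediate as each $\sigma_i$ is exact upstairs.

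To show the descended triple is hyperK\"ahler I would define $g_M$ on $M^{4n}$ by the formula \eqref{hkmetric} of Proposition \ref{metriconlevelsetprop}, now read as a construction from the QK data; the hypothesis $\beta(Z)>2\|Z\|$ is precisely what makes this tensor positive-definite, as one sees from the sign of the denominator $\beta(Z)^2-4\|Z\|^2$ appearing after clearing \eqref{positiveHK} (recall $\beta(Z)=4(a-2p)^{-1}$). The heart of the argument is to check that $(g_M,\sigma_1,\sigma_2,\sigma_3)$ is a genuine almost hyper-Hermitian structure, i.e. that the endomorphisms $I_i$ determined by $\sigma_i=g_M(I_i\cdot,\cdot)$ are $g_M$-orthogonal and satisfy the quaternion relations \eqref{quaternionrelations}. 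This is a pointwise linear-algebra verification, carried out in the splitting of the tangent space into the $\partial_x$-line, the $\tilde{Z}$-direction and the QK-horizontal distribution, using \eqref{connec} to expand the $\sigma_i$ and \eqref{zsharp}, \eqref{connectiononlevelset} to treat the connection terms. Once the $Sp(n)$-structure is in place, closedness of all three $\sigma_i$ forces it to be torsion-free — the classical fact that a closed $Sp(n)$-structure has holonomy contained in $Sp(n)$ — so $g_M$ is hyperK\"ahler with K\"ahler forms $\sigma_i$, and integrability of the $I_i$ comes for free.

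For the last assertion, $\partial_x$ commutes with $\tilde{Z}$ (the latter being $x$-independent) and hence descends to a vector field on $M^{4n}$. Since $\beta,\al_2,\al_3,\beta(Z)$ are pulled back from $\bar{N}$ while $\partial_x f=-2h$ and $\partial_x h=2f$, one finds $\mathcal{L}_{\partial_x}P_1=0$, $\mathcal{L}_{\partial_x}P_2=-2P_3$ and $\mathcal{L}_{\partial_x}P_3=2P_2$, which upon applying $-d$ yield $\mathcal{L}_{\partial_x}\sigma_1=0$, $\mathcal{L}_{\partial_x}\sigma_2=-2\sigma_3$ and $\mathcal{L}_{\partial_x}\sigma_3=2\sigma_2$, exactly the permuting relations of subsection \ref{permutingsubsection}. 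The main obstacle I anticipate is step (iii): verifying the algebraic compatibility of the $\sigma_i$ with $g_M$ and the quaternion relations, since this requires carefully tracking the three-fold splitting of $TM$ and matching $\beta,\al_2,\al_3,\xi_X$ to the horizontal and vertical pieces; by contrast the descent and the permuting relations are short Cartan-calculus computations.
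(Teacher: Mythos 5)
Your proposal is correct and follows essentially the same route as the paper's proof: Cartan calculus gives horizontality via $dx(\tilde{Z})=0$ and $\al_2(Z)=\al_3(Z)=0$ (from (\ref{zsharp})), the relations $\partial_x f=-2h$, $\partial_x h=2f$ give the permuting action, and the quaternionic compatibility is reduced to an algebraic check obtained by reversing the computations of subsection \ref{thepermutingsubsection} with $\beta(Z)=4(a-2p)^{-1}$. Your additional remarks --- the explicit positivity of $g_M$ from $\beta(Z)>2\|Z\|$ and the appeal to Hitchin's lemma that a closed almost hyper-Hermitian structure is hyperK\"ahler --- are correct and merely make explicit what the paper leaves implicit.
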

\begin{proof}
	It is easy to see that $\mathcal{L}_{\partial_x}\sigma_1=0$, $\mathcal{L}_{\partial_x}\sigma_2=-2\sigma_3$ and $\mathcal{L}_{\partial_x}\sigma_3=+2\sigma_2$. The fact that $\sigma_i$ satisfy the quaternionic relation is an algebraic condition and can be easily deduced by reversing the steps in subsection \ref{thepermutingsubsection} with $\beta(Z)=4(a-2p)^{-1}$.
	So the only remaining thing to check is that $\tilde{Z}\ip \sigma_i=0$. First we have that
	\[\tilde{Z}\ip\sigma_1=-\mathcal{L}_{\tilde{Z}}(\beta(Z)^{-1}(2\cdot dx-\beta))-d(\beta(Z)^{-1}\beta(Z))=0.\]
	Secondly note that from (\ref{zsharp}) we know that $Z\ip\al_2=Z\ip\al_3=0$ and hence
	\[\tilde{Z}\ip\sigma_2=-\mathcal{L}_{\tilde{Z}}(\beta(Z)^{-1}(h\cdot \al_3+f\cdot \al_2))=0\]
	and 
	\[\tilde{Z}\ip\sigma_3=-\mathcal{L}_{\tilde{Z}}(\beta(Z)^{-1}(h\cdot \al_2-f\cdot \al_3))=0.\]	
Thus, $\sigma_i$ are basic $2$-forms (with respect to the action generated by $\tilde{Z}$) and hence descend to $M^{4n}$.
\end{proof}
\begin{Rem}
The above definition of $\sigma_i$ was chosen to match the previous condition that $\sigma_i=d\kappa_i$. So this indeed corresponds to the inverting the construction described in subsection \ref{thepermutingsubsection}.
\end{Rem}
It also follows now that the induced HK metric on $M^{4n}$ is given by (\ref{hkmetric}). One can verify directly that indeed $g_{M}(\tilde{Z},\cdot)$ vanishes. The hypothesis that $\beta(Z)>2\|Z\|$ ensures that $g_M$ is positive definite i.e. $g_M(\bar{I}_i(Z),\bar{I}_i(Z)), g_{M}(X,X) >0$ (see (\ref{hkmetric}) and (\ref{positiveHK})).
\begin{Rem}
It was brought to our attention by Vicente Cort\'es that the correspondence established by Theorem \ref{HKQKcorrespondenceTheorem} was also demonstrated in \cite{Alekseevsky2015, Alekseevsky2013} albeit from a rather different approach. The latter works were motivated by the c-map construction, originating from the work of Ferrara-Sabharwal in \cite{Ferrara1990}, and involves making the correspondence via a certain pseudo-HK manifold
whereas our approach here stays in the realm of Riemannian geometry and goes via the QK manifold $N^{4n+4}$. To the best of our knowledge the presence of the Einstein manifold $N^{4n+4}$ does not seem to have been predicted in the physics literature so it would be interesting to understand the role of $N$ from a physics perspective.
The authors of \cite{Alekseevsky2015, Alekseevsky2013} considered more generally the case when $g_M$ and $g_{red}$ also have mixed signature (though our construction can naturally be extended to these cases as well).
\end{Rem}
\noindent\textbf{Analogy with the Haydys HK/QK correspondence.}\\
\indent In \cite{Swann1991} Swann shows that given a QK manifold $N^{4n}_{+}$ with positive scalar curvature, one can construct an associated bundle $\mathcal{U}(N_+)$ using the action of $Sp(n)Sp(1)$ on $\mathbb{H}^\times/\mathbb{Z}_2$ and moreover it admits a natural HK structure. Now if $N_+$ admits a Killing vector field $Y$ preserving the QK structure then he shows that one can lift this action to a triholomorphic Killing action on $\mathcal{U}(N_+)$. On the other hand there is also an isometric $S^3 \subset \mathbb{H}^\times$ action on the fibre of $\mathcal{U}(N_+)$ commuting with the lifted $S^1_{\tilde{Y}}$ action and this descends to the HK quotient $M^{4n}:=\mathcal{U}(N_+)/\!\!/\!\!/\!\!/ S^1_{\tilde{Y}}$ to give a permuting Killing vector field $X$. In \cite{Haydys2008} Haydys then shows that one can in fact invert this construction to recover $N_+$ from $M$, see also \cite{Hitchin2013} for a twistorial description. The construction described in this section shows that $(M,X)$ is also in a one-to-one correspondence with $(\bar{N},Z)$. This is summed up in the following diagram:
\begin{center}
	\begin{tikzcd}[column sep=tiny]
		&\ \ \  S^1_{\tilde{Y}} \curvearrowright \mathcal{U}(N_+) \curvearrowleft S^3  \ar[dr] \ar[dl]
		& &   S^1_{\tilde{X}} \curvearrowright N  \curvearrowleft \mathbb{T}^3_{y_i} \ar[dr] \ar[dl] & &  \\
		(N_+^{4n},Y) 
		&
		& (M^{4n},X) & \ & (\bar{N}^{4n},Z) & 
	\end{tikzcd}
\end{center}
\begin{Rem}
It is likely that 
$\mathcal{U}(N_+)$ and $N$ are also related by our correspondence say by fixing a permuting $S^1\subset S^3$ on $\mathcal{U}(N_+)$ and a corresponding $S^1 \subset {Sp(1,1)}$ on $N$ (recall that $\frac{Sp(1,1)}{Sp(1)Sp(1)}$ is the fibre of $N\to M$). 
Furthermore we expect that this correspondence can more generally be extended to the hypercomplex manifolds of Theorems \ref{hypercomplextheorem} and \ref{extendedhypercomplextheorem}, and certain complex manifolds using Joyce's hypercomplex reduction \cite{Joyce1991}. This will be investigated in future work.
\end{Rem}
\begin{Rem}
	It was shown in \cite{BoyerGalickiMann1993} that associated to $N^{4n}_+$ are three other positive scalar curvature Einstein manifolds, namely $\mathcal{U}(N_+)$, $Z(N_+)$ and $\mathcal{S}(N_+)$. By analogy associated to $M^{4n}$, there are three negative scalar curvature Einstein manifolds, namely $N$, $P$ and $L$ of Theorem \ref{maintheorem}. Moreover in each case $Z(N_+)$ and $P$ are in fact K\"ahler-Einstein. It would be interesting to find directly a correspondence between these Einstein manifolds as well. 
\end{Rem}

\section{Twistor spaces}\label{twistorsection}
In this section we construct the twistor spaces of the QK manifolds $N^{4n+4}$ of Theorem \ref{maintheorem} and that of the hypercomplex manifolds $M^{4n+4}_{\al,\xi,\eta,\nu}$ of Theorem \ref{hypercomplextheorem}. 

The twistor space is topologically defined as the unit sphere bundle in the rank $3$ vector bundle $\mathcal{G}$. In general this bundle is not trivial; for instance the twistor space of $\mathbb{H}\mathbb{P}^n$ is $\C \mathbb{P}^{2n+1}$. However in our case we do have globally well-defined hypercomplex structures so that topologically $Z(N)$ and $Z(M_{\al,\xi,\eta,\nu})$ are diffeomorphic to $N\times S^2$ and $M_{\al,\xi,\eta,\nu}\times S^2$ respectively. 
Both the twistor space of a QK manifold and hypercomplex manifold admit a distinguished complex structure (not a product one). The idea of twistor theory is essentially to encode the data on $N$ and $M_{\al,\xi,\eta,\nu}$ in terms of the complex geometry of their twistor spaces \cite{AtiyahHitchinSinger78, Salamon1986}. We shall first consider the hypercomplex twistor space then the QK one following closely \cite{HKLR} and \cite{Salamon1982}. 

\subsection{Twistor space of hypercomplex examples} 
We now define the complex structure on the twistor space $Z(M_{\al,\xi,\eta,\nu})$.
Consider the unit $2$-sphere $S^2=\{(a,b,c)\ |\ a^2+b^2+c^2=1\}$, then  
at the point $p=(m,(a,b,c))\in M_{\al,\xi,\eta,\nu} \times S^2$ we define an almost complex structure $\tilde{I}$ on $T_{p}Z(M_{\al,\xi,\eta,\nu})\cong T_mM_{\al,\xi,\eta,\nu} \oplus T_{(a,b,c)}S^2$ by
\begin{align*}
	\tilde{I}=&\ (aI_1+bI_2+cI_3,I_0)\\
	=&\ \Big(\frac{ 1-z\bar{z}}{1+z\bar{z}}I_1+\frac{ z+\bar{z}}{1+z\bar{z}}I_2+\frac{ i(z-\bar{z})}{1+z\bar{z}}I_3,I_0\Big)
\end{align*}
where $z$ denotes a local complex coordinate on $S^2 \cong \C \mathbb{P}^1$ and $I_0$ is the associated complex structure. To prove that $\tilde{I}$ is integrable we first need to identify the $(1,0)$ forms on $Z(M_{\al,\xi,\eta,\nu})$ aside from $dz$. 

As in the hyperK\"ahler case \cite{HKLR}, it is easy to check that if $\theta$ is a $(1,0)$ form with respect to $I_1$ then $\theta+zI_3(\theta)$ is a $(1,0)$ form with respect to $\tilde{I}$ on $Z(M_{\al,\xi,\eta,\nu})$. Thus, a basis of $(1,0)$ forms for $I_1$ together with $dz$ give a basis of $(1,0)$ form for $\tilde{I}$.
Integrability of $\tilde{I}$ will follow if we can show that $d(\Gamma(\Lm^{1,0}))\subset \Gamma(\Lm^{2,0}\oplus \Lm^{1,1})$.

For any torsion-free connection $\nabla$ on $M_{\al,\xi,\eta,\nu}$ we have that 
\[d(\theta+zI_3(\theta)) = dx_i \w \nabla_{\partial_{x_i}}(\theta+zI_3(\theta) )+dz \w I_3(\theta) ,\]
where $\{x_i\}$ denote local coordinates on $M_{\al,\xi,\eta,\nu}$, since $d(\cdot)=\text{Alt}(\nabla(\cdot))$ on differential forms cf. \cite[Corollary 8.6]{KobayashiNomizu1}. Now let $\nabla$ be the Obata connection i.e. the unique torsion-free connection which preserves $I_1,I_2,I_3$ \cite{Obata1956} so that $\nabla \circ \tilde{I}= \tilde{I} \circ \nabla$. Since
\[\tilde{I}(\nabla_{\partial_{x_i}}(\theta+zI_3(\theta) ))=i(\nabla_{\partial_{x_i}}(\theta+zI_3(\theta) ))\]
we deduce that $dz$ and $\nabla_{\partial_{x_i}}(\theta+zI_3(\theta) )$ are both of type $(1,0)$ with respect to $\tilde{I}$ and it follows that $d(\theta+zI_3(\theta)) \in \Lm^{2,0}\oplus \Lm^{1,1}$ i.e. $N_{\tilde{I}}\equiv 0$. 
\begin{Rem}
	For hyperK\"ahler manifolds the Obata connection coincides with the Levi-Civita connection whereas for QK manifolds the difference between these two connections is essentially determined by (\ref{qkequ}).
\end{Rem}
The projection map
\begin{equation}
	p:Z(M_{\al,\xi,\eta,\nu})\to \C \mathbb{P}^1\label{fibrationP1}\end{equation}
is clearly holomorphic and each fibre $p^{-1}(z)\simeq M_{\al,\xi,\eta,\nu}$ can be viewed as a complex manifold endowed with the complex structure $aI_1+bI_2+cI_3$. The antipodal map on $S^2$ induces an anti-holomorphic involution on $Z(M_{\al,\xi,\eta,\nu})$ compatible with $p$.

There is also a globally well-defined $(2,0)$-form on $Z(M_{\al,\xi,\eta,\nu})$ given by
\begin{equation}\check{\Om}:=(\check{\om}_2+i\check{\om}_3)+2z \check{\om}_1 - z^2(\check{\om}_2-i\check{\om}_3).\end{equation}
Unlike in the hyperK\"ahler case however $\check{\Om}$ is not holomorphic on the fibres $p^{-1}(z)$ but the $(2n+2,0)$ form $\check{\Om}^{n+1}$ is. In particular, this implies that each fibre of (\ref{fibrationP1}) has trivial first Chern class.

Consider the $(1,0)$ forms $\xi+i\eta$ and $\nu + i\al$ with respect to $I_1$ on $M_{\al,\xi,\eta,\nu}$, then from the above argument we have that 
\begin{align*}
\chi :=&\ \xi+i\eta+zI_3(\xi+i\eta)+iz(\nu + i\al+zI_3(\nu + i\al))\\
=&\ (\xi+i\eta)-2z\al-z^2(\xi-i\eta)
\end{align*}
is a $(1,0)$ form with respect to $\tilde{I}$. Globally $\chi$ can be interpreted as an $\mathcal{O}(2)$-valued $1$-form on $Z(M_{\al,\xi,\eta,\nu})$ since it depends quadratically on $z$. 

We claim that $\chi$ is a holomorphic connection $1$-form on $p^{-1}(z)$. Computing fibrewise we have 
\[d\chi=-(\sigma_2+i\sigma_3)-2z\sigma_1+z^2(\sigma_2-i\sigma_3),\]
which is indeed a holomorphic $(2,0)$ form on the fibres of $Z(M)\to \C \mathbb{P}^1$, in fact it covariantly constant with respect to the Levi-Civita connection of $g_M$. Moreover, the form $d\chi$ on $Z(M)$ completely determines the HK metric $g_M$ cf. \cite[Theorem 3.3]{HKLR}.

To sum up we have shown that Theorem \ref{hypercomplextheorem} can be described in terms of the twistor theory:
\begin{Th}
The $\mathbb{T}^4$-bundle 
\[Z(M_{\al,\xi,\eta,\nu}) \to Z(M)\] 
is holomorphic with vertical $(1,0)$ forms spanned by $z_1(\xi+i\eta)-z_2(\al+i\nu)$ and $z_1(\nu + i\al)+z_2(\eta + i\xi)$ for $[z_1:z_2]\in \C \mathbb{P}^1$.
Furthermore, $\chi$ is a holomorphic section of $\Lm^{1}Z(M_{\al,\xi,\eta,\nu})\otimes \mathcal{O}(2)$ such that $d\chi\in \Om^{2}(Z(M))(2)$ defines a holomorphic symplectic form on the fibres of $Z(M) \to \C \mathbb{P}^1.$
\end{Th}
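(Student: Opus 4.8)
The plan is to assemble the computations already carried out above and recast them in the relative twistor formalism of \cite{HKLR}, viewing $Z(M_{\al,\xi,\eta,\nu})\to Z(M)$ as a $\mathbb{T}^4$-bundle equipped with a distinguished $\mathcal{O}(2)$-valued connection. First I would check holomorphicity of the projection. The bundle map $M_{\al,\xi,\eta,\nu}\to M$ sends the horizontal distribution isomorphically onto $TM$ and, by the very construction of the hypercomplex structure in Theorem \ref{hypercomplextheorem}, intertwines each $I_i$ with the corresponding $I_i$ on $M$. Since $Z(M_{\al,\xi,\eta,\nu})=M_{\al,\xi,\eta,\nu}\times S^2$ and $Z(M)=M\times S^2$ carry the almost complex structures $\tilde{I}=(aI_1+bI_2+cI_3,I_0)$ assembled from these $I_i$ over the \emph{same} $S^2$-factor, the induced map (the identity on $\C\mathbb{P}^1$) intertwines the two $\tilde{I}$'s and is therefore holomorphic.

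Next I would identify the vertical $(1,0)$-forms. On a $\mathbb{T}^4$-fibre the cotangent space is spanned by $\al,\xi,\eta,\nu$, and with respect to $I_1$ the combinations $\xi+i\eta$ and $\nu+i\al$ are of type $(1,0)$, as follows from the fibre action of $I_1$ recorded after the proof of Theorem \ref{hypercomplextheorem}. Feeding these into the rule $\theta\mapsto\theta+zI_3(\theta)$, which was shown above to carry $I_1$-type $(1,0)$ forms to $\tilde{I}$-type $(1,0)$ forms, and using $I_3(\xi+i\eta)=-(\al+i\nu)$ and $I_3(\nu+i\al)=\eta+i\xi$, produces $(\xi+i\eta)-z(\al+i\nu)$ and $(\nu+i\al)+z(\eta+i\xi)$; homogenising in $[z_1:z_2]$ gives precisely the two $\mathcal{O}(1)$-valued spanning forms in the statement. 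As $z$ ranges over $\C\mathbb{P}^1$ these span the relative $(1,0)$ cotangent bundle of $Z(M_{\al,\xi,\eta,\nu})\to Z(M)$.

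For the assertion on $\chi$ I would observe that $\chi$ equals the combination $[(\xi+i\eta)-z(\al+i\nu)]+iz[(\nu+i\al)+z(\eta+i\xi)]$ of the two vertical forms, hence is itself a vertical (relative) $(1,0)$-form; homogenising it as $z_1^2(\xi+i\eta)-2z_1z_2\al-z_2^2(\xi-i\eta)$ exhibits it as a section of $\Lm^1\ot\mathcal{O}(2)$. Holomorphicity I would verify fibrewise over $\C\mathbb{P}^1$: using $d\al=\sigma_1$, $d\xi=-\sigma_2$, $d\eta=-\sigma_3$, the fibrewise exterior derivative is the already-computed $d\chi=-(\sigma_2+i\sigma_3)-2z\sigma_1+z^2(\sigma_2-i\sigma_3)$, which is the pullback from $M$ of the standard twistor symplectic form. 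On each fibre $p^{-1}(z)\simeq M$, with complex structure $aI_1+bI_2+cI_3$, this form is of type $(2,0)$, and as a function of $z$ it is a polynomial of degree $2$; these two facts are exactly the conditions for $\chi$ (respectively $d\chi$) to be a holomorphic section of the $\mathcal{O}(2)$-twisted bundle in the sense of \cite[Theorem 3.3]{HKLR}. Finally, since $d\chi$ involves only the $\sigma_i$ it is basic for the $\mathbb{T}^4$-fibration, so it descends to an element of $\Om^2(Z(M))(2)$; restricted to the fibres $M$ of $Z(M)\to\C\mathbb{P}^1$ it is exactly the holomorphic symplectic form of the hyperK\"ahler structure, non-degenerate and varying holomorphically, again by \cite[Theorem 3.3]{HKLR}.

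The main obstacle is bookkeeping rather than any hard computation. One must carefully distinguish the \emph{relative} (fibrewise over $\C\mathbb{P}^1$) exterior derivative, for which $d\chi=-(\sigma_2+i\sigma_3)-2z\sigma_1+z^2(\sigma_2-i\sigma_3)$ descends to $Z(M)$ and is of pure type $(2,0)$, from the total exterior derivative, whose extra term $-2\,dz\wedge[\al+z(\xi-i\eta)]$ neither descends to $Z(M)$ nor is of pure type with respect to $\tilde{I}$. Getting the $\mathcal{O}(2)$-twist and the precise meaning of ``holomorphic section'' right---namely $\bar\partial$-closedness of an $\mathcal{O}(2)$-valued relative $1$-form, which reduces to $(2,0)$-type on the fibres of $p$ together with polynomial dependence on $z$---is what makes the clean statement agree with the ``computing fibrewise'' calculation used above.
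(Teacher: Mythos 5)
Your proposal is correct and follows essentially the same route as the paper: the theorem there is stated as a summary of the preceding computations, namely the rule $\theta\mapsto\theta+zI_3(\theta)$ applied to the $I_1$-$(1,0)$ fibre forms $\xi+i\eta$ and $\nu+i\al$, the resulting formula $\chi=(\xi+i\eta)-2z\al-z^2(\xi-i\eta)$, and the fibrewise computation $d\chi=-(\sigma_2+i\sigma_3)-2z\sigma_1+z^2(\sigma_2-i\sigma_3)$. Your added remarks on the holomorphicity of the projection to $Z(M)$ and on separating the relative from the total exterior derivative are accurate clarifications of points the paper leaves implicit, not a different argument.
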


\subsection{Twistor space of QK examples}
We now construct the complex structure on the QK twistor space $Z(N)$ of $N$. It will be more convenient to work on the $\C^2$ bundle $\mathbf{H}:=N \times \C^2$ whose projectivisation is just  $Z(N)$ \cite{Salamon1982}. Since $\mathfrak{su}(2)\cong \mathfrak{so}(3)$ we can lift the ${SO}(3)$ connection form (\ref{qkequ}) on  $\langle \om_1,\om_2,\om_3\rangle$ to the $SU(2)$ connection 
\begin{equation}
A:=	2 e^{2t}
\begin{pmatrix}
	-i\al & -\xi+i \eta \\
	\xi+i \eta & i\al 
\end{pmatrix}
\end{equation} 
on $\mathbf{H}$. This is simply the connection induced by the Levi-Civita connection of $g_N$ on the associated vector bundle $\mathbf{H}$. Indeed one verifies directly that the curvature $2$-form $F_A:=dA+A \w A$ is given by
\begin{equation}
		-2 
	\begin{pmatrix}
		i\om_1 & -\om_2+i \om_3 \\
		\om_2+i \om_3 & -i\om_1 
	\end{pmatrix}.
\end{equation} 
In \cite{Salamon1982} Salamon shows that $\mathbf{H}$ admits an integrable complex structure with respect to which the vertical $1$-forms 
\begin{align}
	\theta_1 &:=\ dz_1+2e^{2t}(-iz_1\al+z_2(-\xi+i\eta)) ,\\
	\theta_2 &:=\ dz_2+2e^{2t}(+iz_2\al+z_1(+\xi+i\eta))
\end{align}
are of type $(1,0)$, where $z_1,z_2$ denote the coordinates of the fibre $\C^2$. This complex structure is $\C^\times$-invariant and hence passes down to $Z(N)$. Furthermore, the $(1,0)$ form
\begin{align*}
\Theta :=&\ z_2\theta_1-z_1\theta_2\\
=&\ z_2dz_1-z_1dz_2-2e^{2t}(z_1^2(\xi+i\eta)+2iz_1z_2\al  +z_2^2(\xi-i\eta))
\end{align*}
is holomorphic and defines a complex contact structure on $Z(N).$ Denoting by $z=-iz_{1}^{-1}z_2$ a local coordinate on $\C\mathbb{P}^1$ we can express the induced contact structure $\Theta$ on $Z(N)$ as
\[-idz-2e^{2t}\chi.\]
Standard twistor theory now asserts that:
\begin{Th}
$\Theta$ together with the data of the anti-holomorphic involution generated by the antipodal map on the $S^2$ fibres of $Z(N)$ completely determine the QK manifold $(N,\Om_{N},g_N)$. 
\end{Th}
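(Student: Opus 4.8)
The plan is to recognise this as the inverse twistor construction for quaternion-K\"ahler manifolds due to Salamon \cite{Salamon1982} (compare \cite{AtiyahHitchinSinger78, Salamon1986, HKLR}); the content of the statement is that no geometric information on $N$ is lost in passing to the pair $(\Theta,\tau)$, where $\tau$ is the anti-holomorphic involution induced by the antipodal map on the $S^2$ fibres. First I would isolate the structural data on $Z(N)$ already produced above: the twistor lines (the $\tau$-invariant rational curves that are the fibres of $Z(N)\to N$), the holomorphic contact form $\Theta$, which is a section of $\Lm^{1}Z(N)\otimes\mathcal{O}(2)$ with contact distribution $D:=\ker\Theta$, and the fixed-point-free involution $\tau$. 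These are exactly the ingredients of the general reconstruction theorem.

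The first step is to recover $N$ as a smooth manifold. The twistor lines are characterised intrinsically as the real (i.e.\ $\tau$-invariant) rational curves $\ell\subset Z(N)$ with normal bundle $\mathcal{N}_\ell\cong\mathcal{O}(1)^{\oplus 2n+2}$. Since $H^{1}(\C\mathbb{P}^{1},\mathcal{O}(1)^{\oplus 2n+2})=0$, Kodaira deformation theory shows that these curves form an unobstructed holomorphic family of complex dimension $4n+4$; imposing reality via $\tau$ cuts this down to a real $(4n+4)$-dimensional family whose parameter space is canonically identified with $N$. This reconstructs the underlying manifold together with the projection $p$.

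Next I would recover the quaternionic data and the metric. At a real line $\ell=p^{-1}(x)$ one has the twistor identification $T_xN\otimes\C\cong H^{0}(\ell,\mathcal{N}_\ell)\cong H^{0}(\C\mathbb{P}^{1},\mathcal{O}(1))\otimes\C^{2n+2}$, and evaluation at the points of $\ell\cong S^{2}$ equips $T_xN$ with the $2$-sphere of complex structures, i.e.\ the subbundle $\mathcal{G}\subset\mathrm{End}(TN)$ and hence the $4$-form $\Om_N$. The metric is then extracted from the contact form: restricting $d\Theta$ to $D$ gives, along each line, an $\mathcal{O}(2)$-valued holomorphic symplectic form, which together with the real structure $\tau$ produces a positive-definite quadratic form on $T_xN$, determined up to a single positive constant. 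That constant is fixed by the scalar curvature, normalised to $s=-1$ in (\ref{connec}); equivalently the contact line bundle is $\mathcal{O}(2)$, whose curvature pins down the Einstein constant. Thus $(\Theta,\tau)$ determines $(N,\Om_N,g_N)$.

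The substantive step, and the main obstacle, is the passage from $(d\Theta,\tau)$ to a positive-definite metric whose holonomy lies in $Sp(n+1)Sp(1)$ rather than merely an almost quaternion-Hermitian structure; this is exactly where one uses the integrability of the twistor complex structure (established above through the Obata-type argument) together with the contact condition $\Theta\wedge(d\Theta)^{n+1}\neq 0$. For the present statement this is entirely supplied by Salamon's theorem \cite{Salamon1982}, so the remaining work is only to verify its hypotheses for our $Z(N)$ --- that $\Theta$ is a genuine contact form and that the twistor lines have the stated normal bundle --- both of which follow from the explicit formula for $\Theta$ and from (\ref{qkequ}).
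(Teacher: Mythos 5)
Your proposal is correct and follows exactly the route the paper intends: the paper offers no argument of its own beyond the phrase ``standard twistor theory now asserts,'' and your sketch is an accurate unpacking of that standard inverse construction (real twistor lines with normal bundle $\mathcal{O}(1)^{\oplus 2n+2}$ via Kodaira theory, the quaternionic structure from evaluation on $H^0(\ell,\mathcal{N}_\ell)$, and the metric from $d\Theta$ restricted to the contact distribution together with the real structure, all resting on Salamon's theorem). The only caveat is that you supply considerably more detail than the paper, which delegates everything to the cited references; there is no discrepancy of substance.
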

As already seen before when $M=\R^{4n}$ we have that $N=\frac{Sp(n+1,1)}{Sp(n+1)Sp(1)}$. In this case a concrete description of $(Z(N),\Theta)$ was given by LeBrun in \cite[Sec. 2]{LeBrunInfiniteQK} as an open set of $\C \mathbb{P}^{2n+3}$. Using this LeBrun was able to deform the quaternion hyperbolic metric to show the existence of infinitely many QK metrics of negative scalar curvature.
In general however it is hard to identify the complex manifold $Z(N)$ explicitly.

\section{Other examples}\label{furtherexamples}
The purpose of this section is to describe other closely related geometric structures that arise on the topological spaces that figure in Theorems \ref{maintheorem}, \ref{hypercomplextheorem} and \ref{extendedhypercomplextheorem}, and to highlight some links between these various geometries.

\subsection{Ricci-flat examples}
We recall some known solutions to the system (\ref{firstequ})-(\ref{lastequ}) when $\lambda=0$. All of these examples have special holonomy.

\subsubsection{Calabi-Yau metrics}
We first consider metrics on $P^{4n+2}$. From (\ref{riccikahlerpde}) we see that if we set 
\begin{equation}
p(t)=t^{1/(2n+2)}
\end{equation} 
then we get a Ricci-flat metric on $\R^+_t \times M^{4n+1}_\al$. This metric degenerates as $t \to 0$ but is complete as $t\to +\infty$. This is a special case of the so-called Calabi ansatz metrics on complex line bundles \cite{Calabi1957}. Although incomplete, these metrics provide a good approximation for the asymptotic behaviour of the complete Tian-Yau metrics \cite{TianYau1990} and has recently been employed in \cite{hsvz} to study new degenerations of hyperK\"ahler metrics on K3 surfaces.

\subsubsection{$G_2$ and $Spin(7)$ metrics}
Next we consider metrics on $L^7$ and $N^8$. When $n=1$ so that $M$ is a hyperK\"ahler $4$-manifold then Apostolov-Salamon found the solution 
\begin{equation}\label{apostolovsalamonsolution}
g=t^2(t+b)^2dt^2+t^{-2}\al^2+(t+b)^{-2}\xi^2+t(t+b)g_M
\end{equation}
on $L^7$ with holonomy group $G_2 \subset SO(7)$ \cite{Apostolov2003} and in \cite{UdhavFowdar2} we extend their construction and found the solution
\begin{equation}
g=t^2(t+b)^2(t+c)^2dt^2+t^{-2}\al^2+(t+b)^{-2}\xi^2+(t+c)^{-2}\eta^2+t(t+b)(t+c)g_M
\end{equation}
on $N^8$ with holonomy group $Spin(7)\subset SO(8)$, where $b, c$ are positive constants. Like the above Calabi-Yau metrics, these examples are only complete when $t \to +\infty$. We are unaware of any other Ricci-flat solution to (\ref{firstequ})-(\ref{lastequ}). 

\subsection{Balanced Hermitian examples}
We now construct balanced Hermitian structures on $M^{4n+2}_{\xi,\eta}$, $N^{4n+4}$ and $M^{2n+4}_{\nu_1,\nu_2,\nu_3,\nu_4}$. In the latter two cases we show that the Hermitian metric is balanced with respect to the $2$-sphere of complex structures. 

\subsubsection{Balanced structures on $M^{4n+2}_{\xi,\eta}$}
Consider the natural $U(2n+1)$-structure on $M^{4n+2}_{\xi,\eta}$ defined by
\begin{gather}
g=\xi^2+\eta^2+g_M,\label{balancedmetric}\\
\om=\xi \w \eta + \sigma_1.
\end{gather}
Then the $(2n+1,0)$-form defined by 
\begin{equation}
	\Upsilon = (\xi + i\eta) \w (\sigma_2 + i\sigma_3)^n.\label{balancedvolform}
\end{equation}
is closed, and hence holomorphic, since
\[d\Upsilon=-(\sigma_2+i\sigma_3)^{n+1}=0.\]
So $(M^{4n+2}_{\xi,\eta},\Upsilon)$ is a complex manifold with vanishing first Chern class. We also have that
\begin{equation*}
	d(\om^{2n})  = (2n) (-\sigma_2 \w \eta + \sigma_3 \w \xi) \w \sigma_1^{2n-1} =0,
\end{equation*}
where the last equality follows from the fact that $\sigma_2 \w \sigma_1^{2n-1} $ and $\sigma_3 \w \sigma_1^{2n-1} $ are differential forms of type $(2n+1,2n-1)+(2n-1,2n+1)$ on $M^{4n}$. Thus, we have shown 
\begin{Prop}
$(M_{\xi,\eta}^{4n+2}, g, \om ,\Upsilon)$ is a balanced Hermitian manifold with trivial canonical bundle.
\end{Prop}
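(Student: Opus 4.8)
The plan is to read off the statement from three ingredients: that $(g,\om)$ define a genuine almost Hermitian structure, that the associated almost complex structure is integrable, and that the balanced condition $d\om^{2n}=0$ holds; triviality of the canonical bundle will drop out of the integrability argument at no extra cost. Throughout one should keep in mind the notational clash with Section \ref{preliminaries}: there $n$ denoted the complex dimension, whereas here $M^{4n+2}_{\xi,\eta}$ has complex dimension $m=2n+1$.

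First I would make the almost complex structure explicit and check compatibility. On the horizontal distribution $\om$ restricts to $\sigma_1$ and $g$ to $g_M$, which together define the hyperK\"ahler complex structure $I_1$, while on the $\mathbb{T}^2$ fibres $\xi\wedge\eta$ and $\xi^2+\eta^2$ define the standard rotation. Hence $\om=\xi\wedge\eta+\sigma_1$ is non-degenerate and compatible with $g=\xi^2+\eta^2+g_M$, so there is a well-defined orthogonal almost complex structure $J$ with $\om=g(J\cdot,\cdot)$, block-diagonal for the horizontal/vertical splitting. With respect to $J$ one checks that $\xi+i\eta$ is of type $(1,0)$ and that $\sigma_2+i\sigma_3$ is of type $(2,0)$ (being the holomorphic symplectic form of $I_1$), so $\Upsilon=(\xi+i\eta)\wedge(\sigma_2+i\sigma_3)^n$ is a \emph{nowhere-vanishing} section of $\Lm^{2n+1,0}$.

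Next I would prove integrability via the criterion recalled in Section \ref{preliminaries}: a $2m$-manifold is complex precisely when $d(\Gamma(\Lm^{2n+1,0}))\subset\Gamma(\Lm^{2n+1,1})$. Since $\Upsilon$ trivialises the line bundle $\Lm^{2n+1,0}$, every local section is $f\Upsilon$, and once $d\Upsilon=0$ is known we get $d(f\Upsilon)=df\wedge\Upsilon$; the $(1,0)$-part of $df$ wedges with $\Upsilon$ into $\Lm^{2n+2,0}=0$, so $d(f\Upsilon)\in\Gamma(\Lm^{2n+1,1})$ and the whole criterion reduces to the single identity $d\Upsilon=0$. Using $d(\xi+i\eta)=-(\sigma_2+i\sigma_3)$ together with the closedness of the hyperK\"ahler triple, this gives $d\Upsilon=-(\sigma_2+i\sigma_3)^{n+1}$, which vanishes because $\sigma_2+i\sigma_3$ is of type $(2,0)$ on the $2n$-complex-dimensional base and so its $(n+1)$-st power exceeds the available holomorphic degree. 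The same computation shows $\Upsilon$ is closed, hence holomorphic, and therefore a nowhere-zero holomorphic trivialisation of the canonical bundle $K$.

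Finally I would verify the balanced condition $d(\om^{2n})=0$ (balanced means $\om$ coclosed, i.e. $d\om^{m-1}=0$ with $m=2n+1$). Writing $d(\om^{2n})=2n\,\om^{2n-1}\wedge d\om$ with $d\om=-\sigma_2\wedge\eta+\sigma_3\wedge\xi$, and expanding $\om^{2n-1}=\sigma_1^{2n-1}+(2n-1)\,\xi\wedge\eta\wedge\sigma_1^{2n-2}$ using $(\xi\wedge\eta)^2=0$, the fibre terms drop out (each carries a repeated $\xi$ or $\eta$), leaving $2n\,(-\sigma_2\wedge\eta+\sigma_3\wedge\xi)\wedge\sigma_1^{2n-1}$. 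The only place requiring care is to see this is zero: with respect to $I_1$ the forms $\sigma_2\wedge\sigma_1^{2n-1}$ and $\sigma_3\wedge\sigma_1^{2n-1}$ are of bidegree $(2n+1,2n-1)+(2n-1,2n+1)$ on $M^{4n}$, and both bidegrees exceed the complex dimension $2n$, so they vanish. This bidegree bookkeeping is the one genuine subtlety; everything else is routine manipulation of the structure equations $d\xi=-\sigma_2$, $d\eta=-\sigma_3$. Combining the three steps yields that $(M^{4n+2}_{\xi,\eta},g,\om,\Upsilon)$ is a balanced Hermitian manifold with trivial canonical bundle.
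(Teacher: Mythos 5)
Your proof is correct and follows essentially the same route as the paper: the identity $d\Upsilon=-(\sigma_2+i\sigma_3)^{n+1}=0$ gives integrability and holomorphic triviality of the canonical bundle, and the balanced condition reduces to the vanishing of $(-\sigma_2\wedge\eta+\sigma_3\wedge\xi)\wedge\sigma_1^{2n-1}$ via the bidegree count $(2n+1,2n-1)+(2n-1,2n+1)$ on the $2n$-complex-dimensional base. The only difference is that you spell out details the paper leaves implicit (compatibility of $(g,\om)$, the reduction of the integrability criterion to $d\Upsilon=0$, and the expansion of $\om^{2n-1}$), all of which check out.
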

It is clear from (\ref{balancedmetric}) and (\ref{balancedvolform}) that the $\mathbb{T}^2$ fibres of $M_{\xi,\eta}^{4n+2}$ are elliptic curves and the restricted metric on the fibres is flat. 
\begin{Rem}
The metric $g$ is of course not Calabi-Yau i.e. the holonomy group of the Levi-Civita connection is not a subgroup of $SU(2n+1)$. However, there exists a unique Hermitian connection with totally skew-symmetric torsion form called the Bismut connection \cite{Bismut1989} whose (restricted) holonomy group contained in $SU(2n+1)$ cf. \cite[Proposition 3.6]{MarioLectureonStromingerSystem}. 
\end{Rem}
\begin{Rem}
When $M$ is a hyperK\"ahler $4$-manifold, the $SU(3)$-structure on $M^6_{\xi,\eta}$ determined by $(\om,\Upsilon)$ is both complex and half-flat i.e. $d(\om^2)=0$ and $d\text{Re}(\Upsilon)=0$. If we reverse the complex structure on the $\mathbb{T}^2$-fibres so that the $(2n+1,0)$-form is now given by
\begin{equation}\tilde{\Upsilon}=(\eta + i\xi) \w (\sigma_2 + i\sigma_3)\label{volhalfflat}\end{equation}
and 
\begin{equation}\tilde{\om}=\eta \w \xi + \sigma_1\label{symplechalfflat}\end{equation}
then the $SU(3)$-structure is still half-flat but it is no longer complex; this bears a certain resemblance to the theory of twistor spaces of self-dual $4$-manifolds cf. \cite[Proposition 7.5]{Salamon1989}. In \cite{Apostolov2003,ChiossiSalamonIntrinsicTorsion} the $SU(3)$-structure defined by (\ref{volhalfflat}) and (\ref{symplechalfflat}), when $M=\mathbb{T}^4$, was evolved via the Hitchin flow \cite{Hitchin01stableforms} to construct the $G_2$ holonomy metrics (\ref{apostolovsalamonsolution}).
\end{Rem}
\subsubsection{Balanced structures on $N^{4n+4}$}
Consider the $Sp(n+1)$-structure on $N^{4n+4}$ defined by 
\begin{equation}
\tilde{\om}_1:=e^{-\frac{4}{2n+1}t}\om_1,\ \ \ \ \tilde{\om}_2:=e^{-\frac{4}{2n+1}t}\om_2,\ \ \ \ \tilde{\om}_3:=e^{-\frac{4}{2n+1}t}\om_3,
\end{equation}
where $\om_i$ are as in Theorem \ref{maintheorem}. This simply corresponds to conformally rescaling the quaternion-K\"ahler structure in Theorem \ref{maintheorem}. Since complex structures are conformally invariant it follows that $(N,\tilde{g}:=e^{-\frac{4}{2n+1}t}g_N, \tilde{\om}_i)$ is a Hermitian manifold for $i=1,2,3$.
We leave it to the reader to verify, using (\ref{qkequ}), that
\[d(e^{kt}\om_1^{2n+1})=4(2n+1)(k+4)e^{4(n+1)t+at}\sigma_1^{2n} \w \xi \w \eta\w dt\]
and that one gets analogous expressions for $\om_2$ and $\om_3$. The desired result then follows by taking $k=-4$:
\begin{Prop}
	$(N^{4n+4}, \tilde{g}, a\tilde{\om}_1+b\tilde{\om}_2+c\tilde{\om}_3)$ is a balanced Hermitian manifold for $(a,b,c) \in S^2.$
\end{Prop}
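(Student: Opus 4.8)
The plan is to check two independent things: that $(N^{4n+4},\tilde g,J_{(a,b,c)})$ is Hermitian, and that its fundamental form is coclosed. The first is essentially free: by Theorem \ref{maintheorem} the manifold $N^{4n+4}$ is hypercomplex, so $J_{(a,b,c)}:=aI_1+bI_2+cI_3$ is integrable for every $(a,b,c)\in S^2$, and since integrability and Hermitian compatibility are conformally invariant, $\tilde g=e^{-\frac{4}{2n+1}t}g_N$ is Hermitian for $J_{(a,b,c)}$ with fundamental form $a\tilde\om_1+b\tilde\om_2+c\tilde\om_3$. As $\dim_{\C}N=2n+2$, the balanced condition is $d\big((a\tilde\om_1+b\tilde\om_2+c\tilde\om_3)^{2n+1}\big)=0$, and this is all that remains.

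The conceptual device I would use is the behaviour of the Lee form under conformal rescaling. Write $\om:=a\om_1+b\om_2+c\om_3$, set $m:=\dim_{\C}N=2n+2$, and define the Lee form $\theta$ of $(g_N,\om)$ by $d\om^{m-1}=\theta\w\om^{m-1}$ (this determines $\theta$ since wedging with $\om^{m-1}$ is injective on $1$-forms). With $\tilde\om=e^{2f}\om$, $f=-\tfrac{2}{2n+1}t$, one gets $\tilde\theta=\theta+2(m-1)\,df=\theta-4\,dt$. Hence the whole Proposition reduces to the single uniform identity
\[d\big(\om^{2n+1}\big)=4\,dt\w\om^{2n+1}\qquad\text{for all }(a,b,c)\in S^2,\]
computed with the unrescaled forms of Theorem \ref{maintheorem}; the hint in the text is exactly the special case $(a,b,c)=(1,0,0)$.

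To prove this identity I would split $\om=S+G$ into a horizontal part $S:=e^{2t}\sigma_{(a,b,c)}$ with $\sigma_{(a,b,c)}:=a\sigma_1+b\sigma_2+c\sigma_3$ on $M^{4n}$, and a vertical part $G:=aG_1+bG_2+cG_3$, where $\om_i=e^{2t}\sigma_i+G_i$ collects the torus-and-$dt$ terms. Two algebraic facts, both using $a^2+b^2+c^2=1$, collapse the expansion. First, the $G_i$ form a rescaled hyperK\"ahler triple on the rank-$4$ bundle $\langle dt,\,2e^{2t}\al,\,2e^{2t}\xi,\,2e^{2t}\eta\rangle$, so $G_i\w G_j=2\delta_{ij}\vol_4$; thus $G^2=2\vol_4$ and $G^3=0$. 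Second, $\sigma_{(a,b,c)}$ is a (unit-rescaled) hyperK\"ahler K\"ahler form, giving $S^{2n+1}=0$, $S^{2n}=e^{4nt}(2n)!\,\vol_M$ independently of $(a,b,c)$, and $\sigma_{(a,b,c)}^{2n-1}\w\sigma_j=v_j\,(2n)!\,\vol_M$ with $(v_1,v_2,v_3)=(a,b,c)$. Only the $j=1,2$ binomial terms survive, so
\[\om^{2n+1}=(2n+1)\,S^{2n}\w G+(2n+1)(2n)\,S^{2n-1}\w\vol_4.\]
I would then differentiate using $d\sigma_i=0$ together with $d(2e^{2t}\al)=2\,dt\w(2e^{2t}\al)+2e^{2t}\sigma_1$ and its analogues coming from (\ref{connection1})-(\ref{connection3}). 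The first summand produces a contribution proportional to $(n+1)$, while the second, through $d(\al\w\xi\w\eta)=\sigma_1\w\xi\w\eta+\al\w\sigma_2\w\eta-\al\w\xi\w\sigma_3$ and the relations $\sigma_{(a,b,c)}^{2n-1}\w\sigma_j=v_j(2n)!\vol_M$, produces one proportional to $-n$; these are multiples of one and the same $(4n+3)$-form and combine to exactly $4\,dt\w\om^{2n+1}$.

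The main obstacle is precisely this last bookkeeping: organising the cross terms for general $(a,b,c)$ and confirming that the coefficient is genuinely $4$ and \emph{independent} of $(a,b,c)$. The two orthogonality relations $G_i\w G_j=2\delta_{ij}\vol_4$ and $\sigma_{(a,b,c)}^{2n-1}\w\sigma_j=v_j(2n)!\vol_M$ (each a consequence of $a^2+b^2+c^2=1$) are what force the $(a,b,c)$-dependence to factor out of both surviving terms identically, so that the whole computation reduces to the finite cancellation in which the ``closed-part'' contribution $(n+1)$ and the curvature contribution $-n$ leave the net factor $(n+1)-n=1$, reproducing $4\,dt\w\om^{2n+1}$. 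Everything else is routine differentiation.
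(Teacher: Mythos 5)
Your proof is correct and follows essentially the same route as the paper: the paper computes $d(e^{kt}\om_i^{2n+1})$ to be proportional to $(k+4)$ and sets $k=-4$, which is exactly your statement that the Lee form of the unrescaled structure equals $4\,dt$ and is cancelled by the conformal factor $e^{-\frac{4}{2n+1}t}$. The only genuine addition is that you verify the identity $d(\om^{2n+1})=4\,dt\w\om^{2n+1}$ directly for arbitrary $(a,b,c)\in S^2$ via the relations $G_i\w G_j=2\delta_{ij}\vol_4$ and $\sigma_{(a,b,c)}^{2n-1}\w\sigma_j=v_j(2n)!\vol_M$ (and the coefficients $n+1$ and $-n$ you predict are indeed what the expansion produces), whereas the paper records only the three axis cases and leaves the general $(a,b,c)$ to linearity of the codifferential.
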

\subsubsection{Balanced structures on $M^{4n+4}_{\nu_1,\nu_2,\nu_3,\nu_4}$}
Consider the hyper-Hermitian structure on $M^{4n+4}_{\nu_1,\nu_2,\nu_3,\nu_4}$ as defined in Theorem \ref{extendedhypercomplextheorem}. We claim that it is in fact balanced: 
\begin{Prop}
	$(M^{4n+4}_{\nu_1,\nu_2,\nu_3,\nu_4}, \check{g}, a\check{\om}_1+b\check{\om}_2+c\check{\om}_3)$  is a balanced Hermitian manifold for $(a,b,c) \in S^2.$
\end{Prop}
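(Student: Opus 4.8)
The plan is to verify the balanced condition directly. Since $\dim_{\R}M^{4n+4}_{\nu_1,\nu_2,\nu_3,\nu_4}=4n+4$, the complex dimension is $2n+2$, so writing $\om:=a\check{\om}_1+b\check{\om}_2+c\check{\om}_3$ the balanced condition $d(\om^{2n+1})=0$ is equivalent to $\om^{2n}\w d\om=0$. By Theorem \ref{extendedhypercomplextheorem} the data $(\check{g},\check{\om}_1,\check{\om}_2,\check{\om}_3)$ is hyper-Hermitian, so for $(a,b,c)\in S^2$ the $2$-form $\om$ is exactly the fundamental form of the Hermitian structure determined by $a\check{I}_1+b\check{I}_2+c\check{I}_3$. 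Hence it suffices to establish the single identity $\om^{2n}\w d\om=0$ for an arbitrary point $(a,b,c)\in S^2$.

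First I would split $\om=\sigma+\rho$, where $\sigma:=a\sigma_1+b\sigma_2+c\sigma_3$ is the (horizontal) K\"ahler form of $a\check{I}_1+b\check{I}_2+c\check{I}_3$ on $M^{4n}$ and $\rho$ collects the remaining terms, which are built purely from the $\nu_i$. As there are only four fibre directions one has $\rho^3=0$, so the binomial expansion truncates to
\[
\om^{2n}=\sigma^{2n}+2n\,\sigma^{2n-1}\w\rho+\binom{2n}{2}\sigma^{2n-2}\w\rho^2 .
\]
Differentiating, and using $d\sigma_i=0$ together with $d\nu_i=\gamma_i$, one sees that $d\om=d\rho$ is a sum of terms of the shape $\gamma_i\w\nu_j$; in particular $d\om$ has fibre degree one and all of its horizontal coefficients lie in $\mathfrak{sp}(n)$. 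The heart of the argument is then a degree count in $\om^{2n}\w d\om$: the $\sigma^{2n}$ summand contributes horizontal degree $4n+2>4n$ and so vanishes, the $\rho^2$ summand pushes the fibre degree to $5$ and likewise vanishes, and hence only the middle term survives. Thus $\om^{2n}\w d\om$ reduces to a sum of terms of the form $\sigma^{2n-1}\w\gamma_i\w(\text{fibre }3\text{-form})$.

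The key step, and the only place the hypothesis enters, is that each $\sigma^{2n-1}\w\gamma_i$ vanishes. Since $\gamma_i\in\mathfrak{sp}(n)$, by (\ref{spnalgebra}) it is of type $(1,1)$ with respect to every compatible complex structure, and by the splitting (\ref{spnsp1splitting}) it is orthogonal to $\mathfrak{sp}(1)=\langle\sigma_1,\sigma_2,\sigma_3\rangle$; consequently it is primitive with respect to $\sigma=a\sigma_1+b\sigma_2+c\sigma_3$ for \emph{every} $(a,b,c)\in S^2$, and the Lefschetz primitivity relation on $M^{4n}$ gives $\gamma_i\w\sigma^{2n-1}=0$. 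Therefore $\om^{2n}\w d\om=0$ for all $(a,b,c)\in S^2$, which is the claim. I expect the main obstacle to be precisely organising this simultaneous vanishing $\gamma_i\w\sigma^{2n-1}=0$ across the whole $S^2$ of K\"ahler forms $\sigma$ at once, which is exactly the role played by the membership $\gamma_i\in\mathfrak{sp}(n)$ in the proof of Theorem \ref{hypercomplextheorem}; once this primitivity observation is in place the remaining bookkeeping is routine.
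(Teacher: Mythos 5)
Your proof is correct and follows essentially the same route as the paper: expand the power of the fundamental form, discard the top-horizontal term for degree reasons, and kill the remaining term using that each $\gamma_i\in\mathfrak{sp}(n)$ is orthogonal to $\langle\sigma_1,\sigma_2,\sigma_3\rangle$ and of type $(1,1)$, hence satisfies $\gamma_i\w\sigma^{2n-1}=0$ (the paper phrases this as $\sigma_1^{2n-1}\w d\nu_i = (2n-1)!\,g_M(\sigma_1,d\nu_i)\vol_M=0$ rather than via Lefschetz primitivity). The only cosmetic difference is that you work with $\om^{2n}\w d\om$ for a general $(a,b,c)\in S^2$ at once, whereas the paper computes $d(\check{\om}_1^{2n+1})$ and notes the other cases are analogous.
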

First, a simple calculation shows that
\begin{align*}
	d(\check{\om}_1^{2n+1}) &=\ (2n+1)(\sigma_1^{2n} \w d(\nu_{12} +\nu_{34}))+n(2n+1) (\sigma_1^{2n-1}\w d(\nu_{1234})).
\end{align*}
Since $d\nu_i \in \mathfrak{sp}(n)$ the first term always contains an element in $\Lm^{2n+1,2n+1}(M^{4n})$ and thus is zero. The second term also vanishes since
\[\frac{1}{(2n-1)!}\sigma_1^{2n-1}\w d\nu_i = g_M(\sigma_1,d\nu_i)\vol_M=0.\]
Repeating this argument with $\check{\om}_2$ and $\check{\om}_3$ we conclude that the hyper-Hermitian structure of Theorem \ref{extendedhypercomplextheorem} is balanced. Note that by contrast the examples arising from Theorem \ref{hypercomplextheorem} are never balanced.

\bibliography{biblioG}
\bibliographystyle{plain}

\end{document}